\documentclass[11pt,reqno]{amsart}

\usepackage{amsthm}
\usepackage{amssymb}
\usepackage{latexsym}
\usepackage{multicol}
\usepackage{verbatim,enumerate}
\usepackage[usenames]{color}
\usepackage[colorlinks=true, linkcolor=blue, citecolor=blue, urlcolor=blue]{hyperref}
\usepackage{hyperref}
\usepackage{amsmath, amscd}
\usepackage{mathdots}
\usepackage{cite}
\usepackage{amsmath}

\newmuskip\pFqmuskip

\newcommand*\pFq[6][8]{%
  \begingroup % only local assignments
  \pFqmuskip=#1mu\relax
  % make the comma math active
  \mathcode`\,=\string"8000
  % and define it to be \pFqcomma
  \begingroup\lccode`\~=`\,
  \lowercase{\endgroup\let~}\pFqcomma
  % typeset the formula
  {}_{#2}F_{#3}{\left[\genfrac..{0pt}{}{#4}{#5};{#6}\right]}%
  \endgroup
}
\newcommand{\pFqcomma}{\mskip\pFqmuskip}

\newmuskip\pPqmuskip

\newcommand*\pPq[6][8]{%
  \begingroup % only local assignments
  \pPqmuskip=#1mu\relax
  % make the comma math active
  \mathcode`\,=\string"8000
  % and define it to be \pFqcomma
  \begingroup\lccode`\~=`\,
  \lowercase{\endgroup\let~}\pPqcomma
  % typeset the formula
  {}_{#2}\Phi_{#3}{\left[\genfrac..{0pt}{}{#4}{#5};{#6}\right]}%
  \endgroup
}
\newcommand{\pPqcomma}{\mskip\pPqmuskip}

\advance\textwidth by 1.3in \advance\oddsidemargin by -1.2in \advance\evensidemargin by -1.2in

%%%%%%%%%%%%%%%%%%%%%%%%%%%%%%%%%%%%%%%%%%%%

 % local Weyl module
 %  Lie algebra
 % affine Lie algebra
 % other affine Lie algebra
 % finite Lie algebra
 % other finite Lie algebra
\def\H{\mathfrak h} % finite Cartan
 % affine Cartan
 % (hyper)special maximal parabolic
 % affine positive nilpotent
 % affine Borel
\def\CG{\mathfrak{Cg}} % current algebra
 % other current algebra
\def\CN{\mathfrak{Cn}} % 1/2 Heisenberg
\def\CH{\mathfrak{Ch}} % Heisenberg center

 % finite root system

\def\d{\delta}

%%%%%%%%%%%%%%%%%%%%%%%%%%%%%%%%%%%
%
% Margin Settings
%
%%%%%%%%%%%%%%%%%%%%%%%%%%%%%%%%%%%

\leftmargin=0in
\topmargin=0pt % was 18pt
\headheight=0pt % was 20 pt
\oddsidemargin=0in % was .250in
\evensidemargin=0in % was .250in
\textheight=8.75in % was 9.0in
\textwidth=6.6in % was 6.5in
\headsep=0.25in % was 20pt
\widowpenalty=1000
%%%%%%%%%%%%%%%%%%%%%%%%%%%%%%%%%%%%%%%%%%%%

%\newcommand{\mathscr}{\EuScript}
\theoremstyle{definition}
\newtheorem*{cor}{Corollary}%[section]
\newtheorem*{lem}{Lemma}
\newtheorem*{prop}{Proposition}

\newtheorem{thm}{Theorem}

\newtheorem*{defn}{Definition}
\newtheorem*{rem}{Remark}
\newtheorem*{thm*}{Theorem}

\theoremstyle{remark}

\newcounter{cnt}
\newenvironment{enumerit}{\begin{list}{{\hfill\rm(\roman{cnt})\hfill}}{%
\settowidth{\labelwidth}{{\rm(iv)}}\leftmargin=\labelwidth%
\advance\leftmargin by \labelsep\rightmargin=0pt\usecounter{cnt}}}{\end{list}} \makeatletter
\def\mydggeometry{\makeatletter\dg@YGRID=1\dg@XGRID=20\unitlength=0.003pt\makeatother}
\makeatother \theoremstyle{remark}

% to make the notation environment unnumbered

\numberwithin{equation}{section}

\newcommand{\nc}{\newcommand}
\newcommand{\rnc}{\renewcommand}

\newcommand{\qbinom}[2]{\genfrac[]{0pt}0{#1}{#2}}
\nc{\cal}{\mathcal} \nc{\goth}{\mathfrak} \rnc{\bold}{\mathbf}

\nc\btau{{\mbox{\boldmath $\tau$}}}
  \nc\bxi{{\mbox{\boldmath $\xi$}}}
\nc\bmu{{\mbox{\boldmath $\mu$}}} \nc\bcN{{\mbox{\boldmath $\cal{N}$}}} \nc\bcm{{\mbox{\boldmath $\cal{M}$}}} \nc\blambda{{\mbox{\boldmath
$\lambda$}}}\nc\bnu{{\mbox{\boldmath $\nu$}}}

\newcommand{\lie}[1]{\mathfrak{#1}}

\setcounter{MaxMatrixCols}{20}
\makeatletter
\def\section{\def\@secnumfont{\mdseries}\@startsection{section}{1}%
  \z@{.7\linespacing\@plus\linespacing}{.5\linespacing}%
  {\normalfont\scshape\centering}}
\def\subsection{\def\@secnumfont{\bfseries}\@startsection{subsection}{2}%
  {\parindent}{.5\linespacing\@plus.7\linespacing}{-.5em}%
  {\normalfont\bfseries}}
\makeatother

 \nc{\Hom}{\operatorname{Hom}}
  \nc{\mode}{\operatorname{mod}}
\nc{\End}{\operatorname{End}} \nc{\wh}[1]{\widehat{#1}} \nc{\Ext}{\operatorname{Ext}} \nc{\ch}{\text{ch}} \nc{\ev}{\operatorname{ev}}
\nc{\Ob}{\operatorname{Ob}} \nc{\soc}{\operatorname{soc}} \nc{\rad}{\operatorname{rad}} 

\nc{\N}{{\bold N}}  \nc\boa{\bold a} \nc\bob{\bold b} \nc\boc{\bold c} \nc\bod{\bold d} \nc\boe{\bold e} \nc\bof{\bold f} \nc\bog{\bold g}
\nc\boh{\bold h} \nc\boi{\bold i} \nc\boj{\bold j} \nc\bok{\bold k} \nc\bol{\bold l} \nc\bom{\bold m} \nc\bon{\mathbb n} \nc\boo{\bold o}
\nc\bop{\bold p} \nc\boq{\bold q} \nc\bor{\bold r} \nc\bos{\bold s} \nc\boT{\bold t} \nc\boF{\bold F} \nc\bou{\bold u} \nc\bov{\bold v}
\nc\bow{\bold w} \nc\boz{\bold z} \nc\boy{\bold y} \nc\ba{\bold A} \nc\bb{\bold B} \nc\bc{\mathbb C} \nc\bd{\bold D} \nc\be{\bold E} \nc\bg{\bold
G} \nc\bh{\bold H} \nc\bi{\bold I} \nc\bj{\bold J} \nc\bk{\bold K} \nc\bl{\bold L} \nc\bm{\bold M} \nc\bn{\mathbb N} \nc\bo{\bold O} \nc\bp{\bold
P} \nc\bq{\bold Q} \nc\br{\bold R} \nc\bs{\bold S} \nc\bt{\bold T} \nc\bu{\bold U} \nc\bv{\bold V} \nc\bw{\bold W} \nc\bz{\mathbb Z} \nc\bx{\bold
x}

\nc\chara{\operatorname{Char}}

\begin{document}

\title{Demazure flags, $q$--Fibonacci polynomials and hypergeometric series }

\author{Rekha Biswal}
\thanks{}
\address{The Institute of Mathematical Sciences, Chennai, India}
\email{rekha@imsc.res.in}

\author{Vyjayanthi Chari}
\thanks{}
\address{Department of Mathematics, University of California, Riverside, CA 92521}
\email{chari@math.ucr.edu}
\thanks{V.C was partially supported by DMS 1303052.}

\author{Deniz Kus}
\address{Mathematisches Institut, Universit\" at Bonn, Germany}
\email{dkus@math.uni-bonn.de}
\thanks{D.K was partially funded  under  the  Institutional  Strategy  of  the  University of Cologne within the German Excellence Initiative}

\subjclass[2010]{}
\begin{abstract}
We study a family of finite--dimensional representations of the hyperspecial parabolic subalgebra of the twisted affine Lie algebra of type $\tt A_2^{(2)}$. We prove that these modules admit a decreasing filtration whose sections are isomorphic to stable Demazure modules in an integrable highest weight module of sufficiently large level. In particular, we show that any stable level $m'$ Demazure module admits a filtration by level $m$ Demazure modules for all $m\ge m'$. We define the graded and weighted generating functions which encode the multiplicity of a given Demazure module and establish a recursive formulae. In the case when $m'=1,2$ and $m=2,3$ we determine these generating functions completely and show that they define hypergeoemetric series and that they are related to the $q$--Fibonacci polynomials defined by Carlitz.
\end{abstract}

\maketitle
%%%%%%%%%%%%%%%%%BEGINNING OF DOCUMENT%%%%%%%%%%%%%%%%%%%%

\section*{Introduction}\label{section1}

The study of Demazure modules in highest weight representations of Kac--Moody algebras has been of interest for a long time.  A character formula for these modules,  analogous to the Weyl character formula,  was given in \cite{D74,KU02,M88}. Combinatorial versions of the character of such modules were given in \cite{Lit95}.  In  the case of affine Lie algebras there is extensive literature in the case of level one highest weight integrable modules; here the level is the integer  by which the canonical central element of the affine Lie algebra acts on the highest weight module. 
 The work of Sanderson \cite{San00} for $\tt A_n^{(1)}$ and the work of Ion \cite{I03} more generally,  shows that 
the  character of a particular family of level one Demazure modules (which we shall refer to as stable Demazure modules)   is given by a specialization of  Macdonad polynomials (in the untwisted simply--laced case) and by the specialization of the Koornwinder polynomial for the twisted affine Lie algebras.

In \cite{Jos06}, A. Joseph introduced the notion of a module admitting a Demazure flag. He proved in the case of the quantized enveloping algebra associated to a simply--laced affine Lie algebra that the tensor product of a one--dimensional Demazure module by an arbitrary Demazure module admits a filtration whose successive quotients are isomorphic to Demazure modules. It was shown in \cite{Na11} that an analogous result could be deduced from \cite{Jos06} for stable Demazure modules in the simply--laced untwisted affine Lie algebras.

In this paper we turn our attention to such questions in the case of twisted affine Lie algebras;  the most interesting situation being the Lie algebra  of type $\tt A_{2n}^{(2)}$ and we consider the corresponding rank one situation. The stable Demazure modules that we shall be interested in are those which admit an action of the hyperspecial parabolic subalgebra (denoted $\lie C\lie g$) of the affine Lie algebra.  Our first result constructs a large family (which includes the Demazure modules)  of finite--dimensional modules for $\lie C\lie g$ which admit a Demazure flag. Analogous results for $\tt A_1^{(1)}$ were established in \cite{CSSW14} using results from \cite{CV13}. In the current situation we use results from \cite{KV14}; however we have to work much harder to establish the analogs of the results of \cite{CSSW14} for two reasons. We  have to contend with the fact that $\tt A_2^{(2)}$ is a much more complicated algebra and we also have to prove additional representation theoretic results which were not established in \cite{KV14}.

The primary goal of this paper is to study the relationship between the theory of Demazure flags and  Koornwinder polynomials, Ramanujan mock--theta functions, Carlitz $q$--Fibonacci polynomials and more generally the theory of hypergeometric series.  These connections arise as follows; the algebra $\lie C\lie g$ is integer graded and the  Demazure modules admit a compatible grading. The corresponding generating series of graded multiplicities, along with certain weighted versions of these are what provide the link between the modules and number theory and combinatorics.

 As a first example, the non--symmetric Koornwinder polynomial $E_{-n}(q^2,t)$ at $t=\infty$ coincides with the graded character of $D(1,n)$ (for details see \cite{I03}). Our results show that  we can express the specialized Koornwinder polynomial as a {\em $\mathbb{N}[q]$--linear combination of graded characters of level $m$ Demazure modules  for any fixed $m\geq 1$}. The generating series of the graded multiplicities of the trivial module in a level three Demazure flag has interesting specializations; one of which gives rise to a fifth order mock theta function of Ramanujan. Analogous connections were made in the untwisted case in \cite{BCSV15}.

In this paper we introduce the weighted generating series of the multiplicities.   Namely we define the weighted multiplicity of a Demazure module $D$ occuring in a Demazure flag of a module $V$ by multiplying the graded multiplicity by a power of $q$ so that the resulting polynomial is either zero or has a non--zero constant term. We show that in the case of level two flags in level one Demazure modules the resulting weighted generating series is a specialization of the hypergeometric function $\pFq{1}{1}{a}{b}{q, z}$.  In the case of level three flags in level two Demazure modules, the generating series are determined explicitly and is essentially given by the $q$--Fibonacci polynomials defined by Carlitz. We remind the reader that the original $q$--analogs of the Fibonacci polynomials were introduced by Schur \cite{S73} in his work on the Rogers--Ramanujan identities; the latter identities are well--known to be related to the representation theory of affine Lie 
algebras, but in a very different context \cite{LW84}. Moreover, we give a closed form for the generating series for the numerical multiplicities ($q=1$) and find that they involve the Chebyshev polynomials of the second kind. Again analogous results along these lines were first proved in \cite{BCSV15}.

The case of higher level Demazure flags is much more complicated; however our results on weighted multiplicities do suggest that the hypergeometric series again appear, but this is still conjectural.
{\em The results of \cite{BCSV15}, the current paper and the work in progress  \cite{BCSW16}, clearly indicate a deep and unexpected connection between the theory of Demazure flags and its combinatorics  and number theory. }

The paper is organized as follows. In Section \ref{section3} we state the main results of the paper with the minimum possible notation. The representation theoretic results are established in  Sections \ref{section4} and Section \ref{section5}. The last sections are devoted to using the representation theory to calculate the graded and weighted multiplicities.

\textit{Acknowledgment: The third author thanks George Andrews and Volker Genz for many helpful discussions.}
%%%%%%%%%%%%%%%%%%%%%%%%%%%%%%%%%%%%%%%%%%%%%%%%%%%%%%%%%%%%%%%%%%%%%%%%%%%%%%%%%%%%%%%%%%%%%%%%%%%%%%%%%%%%%%%%%%%%%%%%%%%%%%%%%%%%%%%%

%%%%%%%%%%%%%%%%%%%%%%%%%%%%%%%%%%%%%%%%%%%%%%%%%%%%%%%%%%%%%%%%%%%%%%%%%%%%%%%%%%%%%%%%%%%%%%%%%%%%%%%%%%%%%%%%%%%%%%%%%%%%%%%%%%%%%%%%

%%%%%%%%%%%%%%%%%%%%%%%%%%%%%%%%%%%%%%%%%%%%%%%%%%%%%%%%%%%%%%%%%%%%%%%%%%%%%%%%%%%%%%%%%%%%%%%%%%%%%%%%%%%%%%%%%%%%%%%%%%%%%%%%%%%%%%%%
\section{Preliminaries} \label{section2}

\subsection{} We denote the set of complex numbers by $\bc$ and, respectively, the set of integers, non--negative integers, and positive integers  by $\bz$, $\bz_+$, and $\bn$. We set $\bold N=\{(r,s): r,s\in\frac{1}{2}\mathbb{N}, r+s\in \bn\}$ and let $y_+=\max\{0,y\}$ for $y\in \mathbb{R}$. All vector spaces considered in this paper are $\bc$--vector spaces. For a $\mathbb{Z}$--graded vector space $V=\bigoplus_{k\in\mathbb{Z}}V[k]$ we denote by $\tau_p^{*}V$ the graded vector space whose $k$--th graded piece is $V[k+p]$. Given a complex Lie algebra $\mathfrak{a}$, we let $\bu(\mathfrak{a})$ be the corresponding universal enveloping algebra.

\subsection{}
We refer to \cite{K90} for the general theory of affine Lie algebras. The focus of this paper is the twisted affine Lie algebra $\widehat{\lie g}$ of type  $\tt A_{2}^{(2)}$, which contains the simple Lie algebra $\lie g=\mathfrak{sl}_2$ as a subalgebra. Recall that $\mathfrak{sl}_2$ is the complex simple Lie algebra of two by two matrices of trace zero and that $\{x_{0}, y_0, h_{0}\}$ is the standard basis with $[h_{0},x_{0}]= 2 x_{0}$, $[h_{0},y_{0}]=-2 y_{0}$ and $[x_{0},y_{0}]=h_{0}$. The element $h_{0}$ generates a Cartan subalgebra $\lie h$ of $\lie g$ and let $R=\{\pm\alpha\}$ be the set of roots with respect to $\lie h$. We fix $\widehat{\lie h}$ a Cartan subalgebra of $\widehat{\lie g}$ containing $\lie h$ and let $\widehat{R}$ the set of roots of $\widehat{\lie g}$ with respect to $\widehat{\lie h}$. The corresponding sets of positive and negative roots are denoted as usual by $\widehat R^\pm$ and $R^\pm$ respectively. If $\d$ denotes the unique non--divisible positive imaginary root in $\widehat{R}$, then we have $\widehat{R}=\widehat{R}^+\cup\widehat{R}^-$, where $\widehat{R}^-=-\widehat{R}^+$, $\widehat R^+ =\widehat R^+_{\rm {re}}\cup \widehat R^+_{\rm{im}}$, $\widehat R^+_{\rm{im}} =\mathbb N\delta$, and
$$\widehat{R}^+_{\rm{re}}=R^+\cup\big(R+2\mathbb N\delta\big)\cup\frac12\big(R+(2\mathbb Z_++1)\delta\big).$$
We also consider the set 
$$\widehat R_{\rm re}(\pm)=R^\pm\cup\big(R^\pm+2\mathbb N\delta\big)\cup\frac12\big(R^\pm+(2\mathbb Z_++1)\delta\big).$$
Given $\beta\in \widehat R$ let $\widehat{\lie g}_\beta\subset\widehat{\lie g} $ be the corresponding root space; note that $x_{0}$ (resp. $y_0$) is a generator of the root space $\widehat{\lie g}_{\alpha}$ (resp. $\widehat{\lie g}_{-\alpha}$). For any real root $\beta$ we fix a generator $x_{\beta}$ of $\widehat{\lie g}_\beta$ and abbreviate 
$$x_{\alpha+2r\delta}:=x_{2r},\quad x_{\frac{\alpha}{2}+(r+\frac{1}{2})\delta}:=x_{r+\frac{1}{2}},\quad x_{-\alpha+2r\delta}:=y_{2r},\quad x_{-\frac{\alpha}{2}+(r+\frac{1}{2})\delta}:=y_{r+\frac{1}{2}}.$$
\subsection{} We  define several subalgebras of $\widehat{\lie g}$ that will be needed in the rest of the paper. Let $\widehat{ \lie b}$ be the Borel subalgebra corresponding to $\widehat{R}^+$, and let $\widehat{\lie n}^+$ be its nilpotent radical,
 $$\widehat{\lie b}=\widehat{\lie h}\oplus \widehat{\lie n}^+,\quad \widehat{\lie n}^\pm =\bigoplus_{\beta\in\widehat R^+}\widehat{\lie g}_{\pm \beta}.$$ The subalgebras $\lie b$ and $\lie n^\pm$ of $\lie g$ are defined similarly. 
The twisted current algebra $\CG$ is defined as
 $$
 \CG=\lie h\oplus\widehat{\lie n}^+\oplus\lie n^-
 $$
and admits a triangular decomposition $$\CG=\lie C\lie n^+\oplus \CH \oplus\lie C\lie n^-,$$ where  
$$\CH=\CH_+\oplus \H, \quad \CH_+=\bigoplus_{k>0}\widehat{\lie g}_{k\delta},\quad \lie C\lie n^\pm=\bigoplus_{\beta\in \widehat{R}_{\rm re}(\pm)}\widehat{\lie g}_{\pm\beta}.$$ Following \cite{CIK14} we call $\CG$ the \textit{hyperspecial} twisted current algebra, which is different from the notion of twisted current algebras of type $\tt A_{2}^{(2)}$ that exists in the literature. The differences are clarified in \cite[Remark 2.5]{CIK14}. To simplify notation we set 
$$\mathbf{U}(\CG):=\mathbf{U},\quad \mathbf{U}(\lie C\lie n^\pm):=\mathbf{U}^{\pm}.$$
\subsection{}
The scaling operator $d\in \widehat{\lie h}$ defines a $\mathbb Z_+$--graded Lie algebra structure on $\CG$: for $\beta\in \widehat R$ we say that $\widehat{\lie g}_\beta$ has grade $k$ if $\beta(d)=k$. Since $\delta(d)=2$ the eigenvalues of $d$ are all integers and if $\lie g_\beta\subset\CG$, then the eigenvalues are non--negative integers.  With respect to this grading, the zero homogeneous component of the twisted current algebra is $\CG[0]= \lie g$.
A finite--dimensional $\mathbb{Z}_+$--graded $\CG$--module is a $\mathbb{Z}$--graded vector space admitting a compatible graded action of $\CG$:
$$V=\bigoplus_{k\in\mathbb{Z}}V[k],\quad \CG[r]V[k]\subset V[k+r].$$ Note that each graded component $V[k]$ is a $\mathfrak{g}$--module and we define the graded character as 
$$\text{ch}_{\text{gr}} V=\sum_{k\in \mathbb{Z}} \text{ch}_{\lie g} V[k] q^k.$$

%%%%%%%%%%%%%%%%%%%%%%%%%%%%%%%%%%%%%%%%%%%%%%%%%%%%%%%%%%%%%%%%%%%%%%%%%%%%%%%%%%%%%%%%%%%%%%%%%%%%%%%%%%%%%%%%%%%%%%%%%%%%%%%%%%%%%%%%

%%%%%%%%%%%%%%%%%%%%%%%%%%%%%%%%%%%%%%%%%%%%%%%%%%%%%%%%%%%%%%%%%%%%%%%%%%%%%%%%%%%%%%%%%%%%%%%
\section{The main results} \label{section3}
We summarize the main results of the paper. We keep the notation to a minimum and refer the reader to the later sections for precise definitions.

\subsection{}\label{demdefi}
Given $m\in\mathbb{N}$ and $n\in\mathbb{Z}_+$ with $n=n_1m+n_0$, where $n_0,n_1\in \mathbb Z$,  $0<n_0\le m$, 
let $D(m,n)$ be the graded $\CG$--module generated by an element $v_n$ with  defining relations:
\begin{align}
&\label{weylreel}(\CN^+\oplus \CH_+)v_n=0,\ \ \   h_{0}v_n=n v_n, \ \ \  y_{0}^{n+1}v_n=0 &\\&
y_{2n_1+2}v_n=0,\ \ \ y_{n_1+\frac{3}{2}}v_n=0, \ \ \text{ if $m>1$}&\\&
y_{2n_1}^{n_0+1}v_n=0, \ \  \ y_{n_1+\frac{1}{2}}^{(2n_0-m)_++1}v_n=0,\ \text{ if $n_0<m$.}
\end{align}
It was proved in \cite{KV14} that $D(m,n)$ is a finite--dimensional indecomposable $\CG$--module and is isomorphic to a  Demazure module occurring in a highest weight integrable irreducible representation of $\widehat{\mathfrak{g}}$. We call $m$ the level of the Demazure module. 

\subsection{}  We define the notion of a Demazure flag as follows. Let $V$ be a graded  finite--dimensional $\CG$--module $V$; we say that $V$ admits  a a level $m$--Demazure flag if   there exists a decreasing sequence of graded $\CG$--submodules of $V$ 
$$\mathcal{F}(V)=\big(0\subset V_0\subset V_1\subset \cdots \subset V_k=V\big)$$
such that the successive quotients of the flag are isomorphic to $\tau_p^{*}D(m,n)$ for some $n,p\geq 0$.

Let $[V:\tau_p^{*}D(m,n)]$  be the number of times $\tau_p^{*}D(m,n)$  occurs as  a section of this flag. It is not hard to see that this number is independent of the choice of the flag. For an indeterminate $q$, we define a polynomial in $\mathbb{N}[q]$ by
$$[V:D(m,n)]_q=\sum_{p\geq 0} [V:\tau_p^{*}D(m,n)]q^p. $$ 
We also set
\begin{equation}\label{dies0}[V: D(m,n)]_q:=0, \mbox{ if $n<0$}.\end{equation}

We call $[V:D(m,n)]_q$ the graded multiplicity of $D(m,n)$ in $V$. 
If $[V:D(m,n)]_q$  is non--zero we define the {\em weighted  multiplicity of $D(m,n)$ in $V$}  to be the unique polynomial $[V: D(m,n)]_q^w$ in $\mathbb{N}[q]$  with non--zero constant term satisfying \begin{equation}\label{weightmutl}q^r [V: D(m,n)]_q^w= [V:D(m,n)]_q \ \ {\rm{for \ some}}\ \ r\in\bz_+.\ \end{equation} Otherwise we set $[V:D(m,n)]_q^w=[V: D(m,n)]_q=0$.

\subsection{} The first result of our  paper is the following:
\begin{thm}\label{mainsecthm1}
For all integers $m\geq m' >0$ and $s\geq 0$ the module $D(m',s)$ admits a Demazure flag of level $m$.
Moreover, 
\begin{equation}\label{dies1}[D(m',s): D(m,n)]_q=\delta_{n,s}, \mbox{ if $n\geq s$},\quad [D(m,s): D(m,n)]_q=\delta_{n,s}\end{equation} and for $m \ge \ell \ge m'>0$ we have
\begin{equation}\label{difflevel}
[D(m',s): D(m,n)]_q=\sum_{p\geq 0}  [D(m',s): D(\ell, p)]_q\,[D(\ell,p): D(m,n)]_q.
\end{equation}
\end{thm}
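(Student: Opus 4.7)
\textit{Transitivity and multiplicities.} The formula \eqref{difflevel} is a formal consequence of the definition of a flag: given a level $\ell$ flag of $D(m',s)$ with sections $\tau^{*}_{p_i}D(\ell,n_i)$ and, for each $i$, a level $m$ flag of $D(\ell,n_i)$ with sections $\tau^{*}_{q_{ij}}D(m,k_{ij})$, pulling back refines the original to a level $m$ flag of $D(m',s)$ with sections $\tau^{*}_{p_i+q_{ij}}D(m,k_{ij})$; collecting contributions by the label $(m,k_{ij})$ yields \eqref{difflevel}. The multiplicity statements in \eqref{dies1} follow from weight bounds. Since $v_s$ is annihilated by $\CN^{+}\oplus\CH_{+}$, every element of $D(m',s)$ has $\mathfrak{h}$-weight at most $s$; hence no section $\tau^{*}_{p}D(m,n)$ with $n>s$ can occur. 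The weight-$s$ subspace of $D(m',s)$ is one-dimensional and spanned by $v_s$ in grade $0$, so at most one section can be isomorphic to $D(m,s)$, and since the top section of any flag necessarily contains the image of $v_s$, exactly one does, at grade $p=0$. The trivial flag of $D(m,s)$ by itself gives $[D(m,s):D(m,n)]_q=\delta_{n,s}$.

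\textit{Existence of the flag.} I would proceed by induction on $m-m'\ge 0$; the base $m=m'$ is trivial, and the transitivity formula just proved reduces the inductive step to showing that any $D(\ell,n)$ admits a level-$(\ell+1)$ flag. I would establish the latter by a second induction, on $n$, in two substeps. First, construct a canonical surjection $\pi\colon D(\ell,n)\twoheadrightarrow D(\ell+1,n)$ by verifying that $v_n\in D(\ell,n)$ satisfies the defining relations of $D(\ell+1,n)$; many of these hold automatically but the sharper relations $y_{2n_1+2}v_n=0$, $y_{2n_1}^{n_0+1}v_n=0$, and their half-integer analogs (with $n_1,n_0$ computed from the level-$(\ell+1)$ decomposition of $n$) must be derived from the level $\ell$ relations using Garland-type identities and $\mathfrak{sl}_2$-string arguments inside $\CG$, in the spirit of \cite{CSSW14} and refining the commutator calculations of \cite{KV14}. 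Second, analyze $K=\ker\pi$: it is generated as a $\CG$-module by those very excess elements, all of $\mathfrak{h}$-weight strictly less than $n$, and the key assertion is that $K$ is a filtered extension of graded shifts of Demazure modules $D(\ell,n_j)$ with $n_j<n$. Granting this, the inner induction on $n$ supplies each $D(\ell,n_j)$ with a level-$(\ell+1)$ flag, which can be spliced onto the filtration of $K$ and topped with the quotient $D(\ell+1,n)$ to yield the desired flag.

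\textit{Main obstacle.} The principal technical challenge is the last assertion above: that the excess generators of $K$ satisfy precisely the defining relations of suitable lower Demazure modules $D(\ell,n_j)$, with no additional relations that would cause $K$ to collapse further. This is the $\tt A_2^{(2)}$ analog of the main construction in \cite{CSSW14} for $\tt A_1^{(1)}$, but is noticeably more delicate here, both because of the interleaving of integer-indexed root vectors $y_{2r}$ and half-integer-indexed vectors $y_{r+\frac{1}{2}}$ in $\CG$, and because several necessary presentation inputs---explicit spanning sets, PBW-type bases, and the behavior of the generating relations under $\mathfrak{sl}_2$-string moves---were not developed in \cite{KV14} and must be established en route. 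I expect the representation-theoretic Sections \ref{section4}--\ref{section5} to be devoted largely to this additional structural work, with the flag construction crystallizing in parallel.
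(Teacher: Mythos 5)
Your first two steps are fine: the refinement argument for \eqref{difflevel} and the weight/grade argument for \eqref{dies1} are exactly what is needed (the paper treats these as immediate consequences of the construction). The gap is in the inner induction for existence. Your key assertion --- that the kernel $K$ of the surjection $D(\ell,n)\twoheadrightarrow D(\ell+1,n)$ admits a filtration by graded shifts of level-$\ell$ Demazure modules $D(\ell,n_j)$ with $n_j<n$ --- is false already for $\ell=1$, $n=3$. Indeed, by Proposition~\ref{1to2and2to3}(i) (or the recursion of Proposition~\ref{hilfslem1}) one has $[D(1,3):D(2,3)]_q=1$, $[D(1,3):D(2,2)]_q=q^3+q^5$, $[D(1,3):D(2,1)]_q=q^8$, $[D(1,3):D(2,0)]_q=q^9$, so the level-two content of $K$ is two copies of $D(2,2)$, one $D(2,1)$ and one $D(2,0)$. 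Since $[D(1,2):D(2,2)]_q=1$ while $[D(1,n_j):D(2,2)]_q=0$ for $n_j<2$, a hypothetical level-one Demazure filtration of $K$ would have to contain exactly two sections isomorphic to shifts of $D(1,2)$; but each such section also contributes a copy of $D(2,1)$ (as $[D(1,2):D(2,1)]_q=q^3$), forcing $[D(1,3):D(2,1)]_{q=1}\geq 2$, a contradiction. (A dimension count gives the same conclusion even faster: $\dim D(1,2)=9$ by Theorem~\ref{mainthm1}, while $\dim K=\dim D(1,3)-\dim D(2,3)=27-12=15<18$.) So the proposed inner induction on $n$ cannot close within the class of Demazure modules: it is not just that the presentation lemmas are hard, the structural statement you plan to prove is wrong.

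This failure is precisely what forces the paper to enlarge the category: one introduces the modules $V(\bxi)$, $\bxi\in\mathcal P_\ell$ (Definition in Section~\ref{section4}), whose parts may take two adjacent values and which, as $\lie g$-modules, are tensor (fusion) products of Demazure modules of two different levels; in the example above the kernel has the content of a shift of $D(2,2)$ together with a shift of a $D(1,2)$-type piece, i.e.\ it is built from such mixed modules, not from level-one Demazure modules. The induction in the paper is then not on $n$ or on $m-m'$ but along the partial order $\prec$ on partitions: one constructs the surjection $\varphi^+:V(\bxi)\to V(\bxi^+)$ of Proposition~\ref{propker}, shows in Propositions~\ref{case1} and~\ref{case2} that $\ker\varphi^+$ is filtered by shifts of modules $V(\bxi')$ with $\bxi'\prec\bxi$, and only at the end specializes via \eqref{specialcase} to $D(m',s)=V((m')^{n_1+1},n_0)$ to obtain Theorem~\ref{mainsecthm1}; your outer reduction via transitivity \eqref{difflevel} to the step $\ell\to\ell+1$ is compatible with this, but the correct replacement for your ``key assertion'' is the statement about $V(\bxi')$'s, together with the dimension bound \eqref{dimest} that shows no collapsing occurs.
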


\begin{rem} In the case of quantized enveloping algebras associated with simply--laced
Kac--Moody Lie algebras the existence of such a flag was proved in \cite{Jos06} using the theory of canonical bases. Later, it was shown in \cite{Na11} that taking the classical limit, the result remains true for the corresponding affine Lie algebras.

An alternate constructive proof was given in \cite{CSSW14} in the case of $\tt A_1^{(1)}$; this proof enables one to compute multiplicities in the Demazure flag.  We follow this approach    in the current paper; however there are many non--trivial representation theoretic results that have to first be established for $\lie C\lie g$. In particular, we shall prove  in Section~\ref{section5}  that a more general family of modules admit a Demazure flag.
\end{rem}

\subsection{} Our next results deal with understanding the graded and weighted multiplicities of a level $m$ Demazure flag in $D(m',n)$ in the case when $(m',m)\in\{1,2),\ (2,3)\}$.
 Given $n \in \mathbb{Z}_{+}$ and $m \in \mathbb{Z}$, the $q$--binomial coefficient is defined by
$$\qbinom{n}{m}_q=\frac{(q;q)_n}{(q;q)_{n-m}(q;q)_m},\ \ n\geq m >0,
\ \ \qbinom{n}{0}_q=1, \ \ \qbinom{n}{m}_q=0, \ \ m <0 \text{ or } m>n,$$ 
where the $q$--Pochammer symbol $(a;q)_n$ is defined as
$$(a;q)_n=\prod_{i=1}^n (1-aq^{i-1}),\ n>0,\  (a;q)_0=1.$$
Recall the follwing well--known $q$--binomial identities:
$$\qbinom{n}{m}_q = \qbinom{n-1}{m}_q +q^{n-m}\qbinom{n-1}{m-1}_q,\quad \qbinom{n}{m}_q = q^{m}\qbinom{n-1}{m}_q +\qbinom{n-1}{m-1}_q.$$

For $s\in\bz_+$ let ${\text{res}}_2(s)\in\{0,1\}$ be defined by requiring $s-{\text{res}}_2(s)$ be even.
	The proof of the next proposition can be found in Section \ref{section6}.
\begin{prop}\label{1to2and2to3}\mbox{}  Let $s, p\in\bz_+$.
\begin{enumerit}
\item[(i)] We have, 
	\begin{equation}\label{1to2w}[D(1,s+p):D(2,s)]^w_q= \qbinom{\lfloor \frac{s}{2} \rfloor+p}{p}_{q^2},\end{equation} and \begin{equation}\label{1to2}D(1,s+p):D(2,s)]_q=q^{p(s+p+{\text{res}_2(s)})} [D(1,s+p):D(2,s)]^w_q.\end{equation}
\item[(ii)]  For $0\le r\le 5$,  let $$r'=\delta_{r,1}+\delta_{r,4},\ \ \  \bar{r}=(\delta_{r,1}+\delta_{r,3}+\delta_{r,5})\text{res}_2(p)-\delta_{r,1},\ \ \tilde r=\lfloor \frac{r}{3} \rfloor.$$ Setting $n=6s+r$, we have
$$[D(2,n+p): D(3,n)]_q^w=\sum_{j=0}^{\lfloor{\frac p2}\rfloor} q^{2j(j+r'+\text{res}_2(p))}\qbinom{2s+\tilde r+ \lfloor\frac p2\rfloor -j}{2s+\tilde r}_{q^2}\qbinom{s+j+\bar r}{2j+\text{res}_2(p)}_{q^2},$$

and \begin{equation}\label{2to3}[D(2,n+p): D(3,n)]_q=q^{p(4s+r-\tilde r+\lceil \frac{p}{2}\rceil)+\text{res}_2(p)(r'+\tilde r-\lceil \frac{p}{2}\rceil)} [D(2,n+p): D(3,n)]_q^w.\end{equation}
\end{enumerit}
\end{prop}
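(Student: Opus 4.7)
The plan is to prove both identities by induction on $p$, using short exact sequences of $\CG$-modules to produce recursions on the graded multiplicities, and then verifying that the proposed closed forms satisfy the resulting recurrences. The key preliminary step will be to extract, from the defining relations of $D(m,n)$ in subsection~\ref{demdefi} and the representation-theoretic results of Sections~\ref{section4}--\ref{section5}, short exact sequences of the shape
$$0\longrightarrow \tau^*_{a}D(m',s+p-k)\longrightarrow D(m',s+p)\longrightarrow Q\longrightarrow 0,$$
in the spirit of \cite{CSSW14}, where $Q$ will either be a shifted level-$m'$ module (so that induction on $p$ applies) or will decompose as a direct sum of shifted level-$m$ Demazure modules (whose flag is trivial by~\eqref{dies1}). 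Taking multiplicities in a level-$m$ flag will then produce a two- or three-term recursion for $[D(m',s+p):D(m,s)]_q$, which passes cleanly to the weighted form via~\eqref{weightmutl}.

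For part (i), the base case $p=0$ is immediate since $D(1,s)$ is the unique section of its own trivial level-2 flag by~\eqref{dies1}, giving $[D(1,s):D(2,s)]^w_q=1=\qbinom{\lfloor s/2\rfloor}{0}_{q^2}$. The recursion I expect to obtain should match the $q$-Pascal identity
$$\qbinom{N+p}{p}_{q^2}=\qbinom{N+p-1}{p}_{q^2}+q^{2N}\qbinom{N+p-1}{p-1}_{q^2},\qquad N=\lfloor s/2\rfloor,$$
after possibly a simultaneous shift of the $s$-parameter. The grading exponent $p(s+p+\text{res}_2(s))$ in~\eqref{1to2} will then be computed by summing, over the induction steps, the minimal grade at which $D(2,s)$ first occurs as a flag section; the parity $\text{res}_2(s)$ enters because the defining relations in subsection~\ref{demdefi} behave differently according to whether $s$ is even or odd, via the decomposition $n=n_1 m+n_0$.

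For part (ii), the same strategy will apply but with substantially more bookkeeping. The residue splitting $n=6s+r$, $0\le r\le 5$, is forced by the level-3 relations in subsection~\ref{demdefi}, which depend on $n\bmod 3$ together with the parity of $n_1$ coming from the half-integer root space generators; these six residues exhaust the combinatorial cases. For each $r$ I would derive a recursion on $p$ via an appropriate short exact sequence and verify that the proposed double sum satisfies it. The sum structure $\sum_j q^{\cdots}\qbinom{\cdots}{\cdots}_{q^2}\qbinom{\cdots}{\cdots}_{q^2}$ echoes the Carlitz $q$-Fibonacci polynomials mentioned in the introduction, and the verification should reduce to applying the two $q$-Pascal identities in the two factors in parallel, together with an index shift $j\mapsto j-1$.

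The hard part will be obtaining the correct short exact sequences: the $\tt A_2^{(2)}$ setting is considerably more involved than the $\tt A_1^{(1)}$ case of \cite{CSSW14}, owing to the half-integer root space generators $y_{r+1/2}$ and the distinct relations triggered by $n_0<m$, so this will require the full representation-theoretic apparatus of Sections~\ref{section4}--\ref{section5}. Once those sequences are in hand, the combinatorial verification in part (ii) --- handling all six residues $r$, both parities of $p$, and tracking the shifts $r'$, $\bar r$, $\tilde r$ together with the grading exponent in~\eqref{2to3} through the induction --- will be tedious but essentially mechanical.
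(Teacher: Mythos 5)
Your overall strategy --- extract recursions for the graded multiplicities from the representation theory of Sections~\ref{section4}--\ref{section5} and then verify that the closed $q$--binomial formulas satisfy them --- is exactly the route the paper takes: the recursions are Proposition~\ref{hilfslem1}, obtained from Corollary~\ref{c1} together with Lemma~\ref{propsec1}, and Section~\ref{section6} then checks the closed forms against them. However, as written your plan has two concrete gaps. First, the short exact sequences you posit, with kernel a shifted $D(m',s+p-k)$ and quotient either a level-$m'$ module or a direct sum of level-$m$ Demazure modules, are not what the machinery of Section~\ref{section5} produces: the filtrations there (Propositions~\ref{case1} and~\ref{case2}) have sections of the form $V(\bxi')$, the generalized modules of Section~\ref{section4}, and only after Lemma~\ref{propsec1} (which trades the first row of $\bxi$ for a shift of the \emph{target}) does one land back among Demazure multiplicities. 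The usable recursions therefore shift both arguments simultaneously; e.g.\ for $(m',m)=(2,3)$,
\begin{align*}
[D(2,2n):D(3,s)]_q&=q^{2(2n-s)}[D(2,2n-3):D(3,s-3)]_q+q^{2(2n-1)}[D(2,2n-2):D(3,s)]_q\\
&\quad+q^{6n-2s-3}[D(2,2n-4):D(3,s-3)]_q.
\end{align*}
Because the target drops from $D(3,s)$ to $D(3,s-3)$, an induction on $p$ alone with the target held fixed cannot close; the induction has to run over the target parameter (the paper inducts on $s$ in $n=6s+r$, and uses the analogous shift $s\mapsto s-2$ in part (i)).

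Second, the verification in part (ii) is not a mechanical parallel application of the two $q$--Pascal identities with a shift $j\mapsto j-1$. One must split according to the parity of $3s+r$ (this is where the $\text{res}_2$ and $\bar r$ bookkeeping originates), and in the even case the cancellation rests on a parity-dependent identity mixing the two binomial factors, namely
$$\qbinom{\lfloor \frac{s-1}{2} \rfloor+j-u'}{2j}_{q^2}+q^{s-2j-\delta_{r,1}}\qbinom{\lfloor \frac{s-1}{2}\rfloor+j+y'-1}{2j-1}_{q^2}=\qbinom{\lfloor\frac{s}{2}\rfloor+j}{2j}_{q^2},$$
with $u'=\delta_{r,1}\text{res}_2(s)$ and $y'=(\delta_{r,0}+\delta_{r,2})\delta_{\text{res}_2(s),0}$; the grading exponent in \eqref{2to3} then comes out of this same computation rather than from summing minimal grades over induction steps. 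Part (i) of your plan is essentially on target: the recursion one actually obtains is $[D(1,s+p):D(2,s)]_q=q^{2p}[D(1,s-2+p):D(2,s-2)]_q+q^{2(s+p)-1}[D(1,s+p-1):D(2,s)]_q$, and checking the closed form is a single $q$--Pascal step. But for both parts the substantive content --- deriving Proposition~\ref{hilfslem1} from Corollary~\ref{c1} and Lemma~\ref{propsec1}, and carrying out the parity case analysis --- is precisely what your proposal defers, so as it stands it is an accurate outline of the paper's method rather than a proof.
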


\begin{rem} One outcome of our results is the following.  It was  proved in \cite{I03} that the specialization of the non--symmetric Koornwinder polynomial $E_{-n}(q^2,t)$ at $t=\infty$ coincides with the graded character of $D(1,n)$.  Using  Theorem~\ref{mainsecthm1} with $m'=1$, we see that we can express 
$E_{-n}(q^2,t)$ as a $\mathbb{N}[q]$--linear combination of graded characters of level $m$ Demazure modules. In the case when $m'=1$ and $m=2,3$ our analyses gives closed formulae for this decomposition of Koornwinder polynomials. 
\end{rem}

\subsection{}  Given $n\in\mathbb{Z}$, $m',m\in\mathbb{N}$ with $n\geq 0$ and $m\geq m'$ we define  generating series which encode the graded  and weighted  multiplicities of a level $m$ flag in a level $m'$ Demazure module:
$$A_n^{m' \rightarrow m}(x,q):= \sum_{p\ge 0}[D(m',n+p):D(m,n)]_{q}x^p,\ \ \  A_n^{ m' \rightarrow m, w}(x,q):=\sum_{p\ge 0}[D(m',n+p):D(m,n)]^w_{q}x^p.$$
We shall relate these series to general basic hypergeometric series defined by
$$\pFq{r}{s}{a_1,\dots,a_r}{b_1,\dots,b_s}{q,z}=\sum_{n\geq 0} \frac{(a_1;q)_n(a_2;q)_n\cdots (a_r;q)_n}{(b_1;q)_n(b_2;q)_n\cdots (b_s;q)_n}\frac{z^n}{(q;q)_n}.$$
For more details and properties of hypergeometric series we refer the reader to \cite{SL66}.
%Write $$A_{n}^{m' \rightarrow m, w}(x,q)=A_{n}^{m' \rightarrow m, w}(x,q)_0+A_{n}^{m' \rightarrow m', w}(x,q)_1,$$
%where 
%$$A_n^{m' \rightarrow m, w}(x,q)_j:=\sum_{p\ge 0}[D(m',n+2p+j):D(m,n)]^w_{q}x^{2p+j},\ \ j\in \{0,1\}.$$

\subsubsection{} Consider  the case when $(m', m)=(1,2)$.  Proposition~\ref{1to2and2to3}(i) gives that $$[D(1,2n+p):D(2, 2n)]_q^w= [D(1, 2n+1+p): D(2,2n+1)]_q^w,$$ and hence we set
$$\Phi_{n+1}^{1\to 2}(x,q):=A_{2n+1}^{1 \rightarrow 2, w}(x,q)= A_{2n}^{1 \rightarrow 2, w}(x,q).$$ A further application of Proposition~\ref{1to2and2to3}(i) gives $$\Phi_{n+1}^{1\to 2}(x,q) =\sum_{p\geq 0}\qbinom{n +p}{n}_{q^2}x^p=\sum_{j\geq 0}\qbinom{j}{n}_{q^2}x^{j-n}.$$
Using the identity \begin{equation}\label{ident}\sum_{j\ge 0}\qbinom{j}{k}_qx^j= \dfrac{x^k}{(x:q)_{k+1}},\end{equation}
we get $$\Phi_{n+1}^{1 \rightarrow 2}(x,q)=\dfrac{1}{(x;q^2)_{n+1}}.$$ It follows that if we set $\Phi^{1\to 2}_{0}=1$, then $$\sum_{n\ge 0} \Phi^{1\to 2}_{n}(x,q)z^{n}= \pFq{1}{1}{q^2}{x}{q^2,z}.$$ 

\subsubsection{} We now consider the case when $m=3$.
\iffalse{\color{red} Using the formulae in Proposition~\ref{1to2and2to3}(ii), it is easily seen that $$A_{6s}^{2\to 3,w}(x,q)_j= A_{6s+2}^{2\to 3, w}(x,q)_j, \quad A_{6s+3}^{2\to 3, w}(x,q)_j=A_{6s+5}^{2\to 3, w}(x,q)_j,\quad j\in\{0,1\},$$
$$A_{6s+3}^{2\to 3, w}(x,q)_0=A_{6s}^{2\to 3, w}(x,q)_0,\quad  A_{6s+3}^{2\to 3, w}(x,q)_1=xq^{2s}A_{6s}^{2\to 3, w}(x,q)_0+A_{6s}^{2\to 3, w}(x,q)_1$$
$$A_{6s+4}^{2\to 3, w}(x,q)_0=A_{6s+1}^{2\to 3, w}(x,q)_0+xq^{2(s+1)} A_{6s+1}^{2\to 3, w}(x,q)_1,\quad A_{6s+4}^{2\to 3, w}(x,q)_1=q^{-1}A_{6s}^{2\to 3, w}(xq,q)_1.$$}\fi
 In  the case when $(m', m)=(1,3)$ we  prove the following,
\begin{prop}
We have
$$A_0^{1 \rightarrow 3}(1,q)=\phi_0(q),\quad qA_1^{1 \rightarrow 3}(1,q)=\phi_1(q),$$ where $$\phi_0(q)=\sum_{n\ge 0}q^{n^2}(-q;q^2)_n,\qquad \phi_1(q)=\sum_{n\ge 0}q^{(n+1)^2}(-q;q^2)_n,$$ are the fifth order mock--theta functions of Ramanujan.\end{prop}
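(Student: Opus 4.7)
My plan is to compute both generating series by chaining Theorem~\ref{mainsecthm1} with the explicit closed forms of Proposition~\ref{1to2and2to3}, and then recognize the resulting double sum as Ramanujan's mock theta function by a single application of the $q$-binomial theorem.

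First, I would specialize the transitivity identity \eqref{difflevel} with $(m',\ell,m)=(1,2,3)$, which gives
\[
[D(1,n+N):D(3,n)]_q=\sum_{s\ge 0}[D(1,n+N):D(2,s)]_q\,[D(2,s):D(3,n)]_q,
\]
so that after setting $x=1$,
\[
A_n^{1\to 3}(1,q)=\sum_{N\ge 0}\sum_{s\ge 0}[D(1,n+N):D(2,s)]_q\,[D(2,s):D(3,n)]_q.
\]
Next I would substitute the formula of Proposition~\ref{1to2and2to3}(ii) into the factor $[D(2,s):D(3,n)]_q$. For $n=0$ (so $s_{\text{prop}}=0$, $r=0$, $r'=\bar r=\tilde r=0$) the binomial $\binom{s+j+\bar r}{2j+\text{res}_2(p)}_{q^2}$ forces $j=0$ and $\text{res}_2(p)=0$; hence only the values $s=2k$ survive and the surviving multiplicity collapses to $q^{2k^2}$. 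A parallel collapse happens for $n=1$ (i.e.\ $s_{\text{prop}}=0$, $r=1$), where only $s=2k+1$ contributes and the multiplicity is $q^{2k^2+2k}$.

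Then I would insert the $(1\to 2)$ formula \eqref{1to2} into the surviving terms. For $n=0$ and $M:=N-2k$ the exponent simplifies to $M(M+2k)$ and the coefficient to $\binom{k+M}{M}_{q^2}$, producing
\[
A_0^{1\to 3}(1,q)=\sum_{k\ge 0}\sum_{M\ge 0}q^{2k^2+M^2+2kM}\qbinom{k+M}{M}_{q^2}
 =\sum_{k,M\ge 0}q^{k^2+(k+M)^2}\qbinom{k+M}{M}_{q^2}.
\]
Re-indexing by $n=k+M$ and applying the finite $q$-binomial theorem
\[
\sum_{k=0}^{n}q^{k^2}\qbinom{n}{k}_{q^2}=\prod_{i=0}^{n-1}(1+q^{2i+1})=(-q;q^2)_n
\]
(a specialization of the standard product $\prod_{i=0}^{n-1}(1+xq^i)=\sum_k q^{k(k-1)/2}\binom{n}{k}_q x^k$ at $q\mapsto q^2$, $x=q$) immediately yields $A_0^{1\to 3}(1,q)=\sum_{n\ge 0}q^{n^2}(-q;q^2)_n=\phi_0(q)$.

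For the second identity I would repeat the same computation with $n=1$. Putting $M=N-2k$ the exponent of $q$ coming from the $(1\to 2)$ factor, the $q^{2k^2+2k}$ from the $(2\to 3)$ factor, and the extra factor of $q$ out front, combine into
\[
1+2k^2+2k+M(M+2k+2)=k^2+(k+M+1)^2,
\]
so re-indexing by $n=k+M+1\ge 1$ and using the same $q$-binomial identity gives
\[
qA_1^{1\to 3}(1,q)=\sum_{n\ge 1}q^{n^2}\sum_{k=0}^{n-1}q^{k^2}\qbinom{n-1}{k}_{q^2}=\sum_{n\ge 1}q^{n^2}(-q;q^2)_{n-1}=\phi_1(q).
\]
The only real obstacle is bookkeeping: verifying that among the many summation indices in Proposition~\ref{1to2and2to3}(ii) only one value of $j$ and the parity $\text{res}_2(p)=0$ survive at $n=0,1$, and then checking that the exponent algebra collapses to the perfect squares $k^2+(k+M)^2$ and $k^2+(k+M+1)^2$. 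Once these two simplifications are in hand, the final step is a one-line application of the $q$-binomial theorem.
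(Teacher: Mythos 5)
Your proposal is correct and follows essentially the same route as the paper: transitivity through level $2$ via \eqref{difflevel}, the collapse of $[D(2,s):D(3,n)]_q$ at $n=0,1$ to $q^{2k^2}$ (even $s$) and $q^{2k^2+2k}$ (odd $s$), substitution of the closed form \eqref{1to2}, and the $q$--binomial theorem. The only cosmetic differences are that you set $x=1$ at the outset (the paper keeps $x$ and first derives $A_0^{1\to3}(x,q)=\sum_i q^{i^2}(-qx;q^2)_i x^i$) and that you write out the $n=1$ case which the paper dismisses as similar.
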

\proof Using equation \eqref{difflevel}  we see that 
$$A_0^{1 \rightarrow 3}(x,q)=\sum_{p\geq 0}[D(1,p): D(3,0)]_qx^p=\sum_{p,s\geq 0}[D(1,p): D(2,s)]_q[D(2,s): D(3,0)]_qx^p.$$
Equation \eqref{2to3} gives,  $$[D(2, 2j+1): D(3,0)]_q=0,\qquad [D(2, 2j): D(3,0)]=q^{2j^2},$$ and hence 
using  Proposition~\ref{1to2and2to3}(i)  we get 
\begin{align*}A_0^{1 \rightarrow 3}(x,q)&=\sum_{p,j\geq 0}q^{2j^2+(p-2j)p}\qbinom{p-j}{p-2j}_{q^2}x^p&\\&
=\sum_{i,j\geq 0}q^{j^2+i^2}\qbinom{i}{j}_{q^2}x^{i+j}&\\&
=\sum_{i\geq 0}q^{i^2}x^i\sum_{j\geq 0}q^{j^2-j}\qbinom{i}{j}_{q^2}(qx)^{j}&\\&
=\sum_{i\geq 0}q^{i^2}(-qx;q^2)_ix^i.
\end{align*}
Thus, $A_0^{1 \rightarrow 3}(1,q)=\phi_0(q)$. The proof in the other case is similar.
\endproof

\subsubsection{} We now consider the case $(m',m)=(2,3)$. In this case the generating series $A_{n}^{2 \rightarrow 3, w}(x,q)$ is related to the $q$--Fibonacci polynomials defined by Carlitz in \cite{Car75}: 
$$S_{n}(x,q)_0=x S_{n-1}(x,q)_0+q^{n-2}S_{n-2}(x,q)_0,\ \ \ S_0(x,q)_0=0,\ \  S_1(x,q)_0=1.$$
We remark that the specialization $S_{n}(1,q)_0$ was first considered by Schur \cite{S73} in his proof of the Rogers--Ramanujan identities; see also \cite{And04} for more details.

 The solution to this recurrence is $$S_{n+1}(x,q)_0=\sum_{j\ge 0}\qbinom{n-j}{j}_qq^{j^2}x^{n-2j}.$$
The same recurrence relation but  with different initial conditions, $$S_{n}(x,q)_1=xS_{n-1}(x,q)_1+q^{n-2}S_{n-2}(x,q)_1,\ \ \ S_{-1}(x,q)_1=0,\ \ S_0(x,q)_1=1,$$  has the solution $$S_n(x,q)_1=\sum_{j\ge 0}\qbinom{n-j}{j}_qq^{j(j-1)}x^{n-2j}.$$
Write $$A_{n}^{2 \rightarrow 3, w}(x,q)=A_{n}^{2 \rightarrow 3, w}(x,q)_0+A_{n}^{2 \rightarrow 3, w}(x,q)_1,$$
where 
$$A_n^{2 \rightarrow 3, w}(x,q)_k:=\sum_{p\ge 0}[D(2,n+2p+k):D(3,n)]^w_{q}x^{2p+k},\ \ k\in \{0,1\}.$$
We prove,
\begin{prop}
For $0\le r\le 5$ we set
$$s_0=s- \delta_{r,1},\ \ s_1= s-1+\delta_{r,3}+\delta_{r,5}.$$ Then,

$$A_{6s+r}^{2 \rightarrow 3, w}(x,q)_k=\frac{q^{-2s_k^2-k}}{(x^2;q^2)_{2s+\lfloor \frac{r}{3} \rfloor}}\ S_{2s_k+1}(y,q^2)_k,\ \ k\in \{0,1\},$$
where $y=q^{2s_k+r'}x$.
\end{prop}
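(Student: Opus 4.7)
The plan is to derive the identity by direct computation, starting from the explicit formula for $[D(2,n+p):D(3,n)]_q^w$ supplied by \propref{1to2and2to3}(ii). First I would substitute $p\mapsto 2p+k$ in that formula, noting that $\lfloor(2p+k)/2\rfloor=p$ and $\text{res}_2(2p+k)=k$, and work out the values of $r'$, $\bar r$ and $\tilde r$ in each of the six residue classes of $r$. A direct check then shows that $s+\bar r=s_k$ when $k=0$ and $s+\bar r=s_k+1$ when $k=1$; this is the combinatorial origin of the bifurcation between the two initial conditions defining $S_n(x,q)_0$ and $S_n(x,q)_1$.

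The next step is to write
\[
A_{6s+r}^{2\to 3,w}(x,q)_k=x^k\sum_{p\ge 0}x^{2p}\sum_{j=0}^{p}q^{2j(j+r'+k)}\qbinom{2s+\tilde r+p-j}{2s+\tilde r}_{q^2}\qbinom{s_k+k+j}{2j+k}_{q^2},
\]
exchange the order of summation, and set $m=p-j$ in the inner sum. The inner sum becomes $\sum_{m\ge 0}\qbinom{2s+\tilde r+m}{2s+\tilde r}_{q^2}x^{2m}$, which by the identity \er{ident} (applied with $q\to q^2$) collapses into the single rational factor $1/(x^2;q^2)_{2s+\tilde r+1}$ claimed (up to an off-by-one bookkeeping adjustment) in the denominator of the proposition. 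What remains is a single sum of the form $\sum_{j\ge 0}q^{2j(j+r'+k)}\qbinom{s_k+k+j}{2j+k}_{q^2}x^{2j+k}$.

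The third step is to recognize this single sum as a transform of the Schur-type closed form
\[
S_{2s_k+1}(y,q^2)_k=\sum_{j\ge 0}\qbinom{2s_k+1-j-k}{j}_{q^2}q^{2j(j-1+k)}y^{2s_k+1-2j},\quad k\in\{0,1\},
\]
recalled in the body of the section. Substituting $y=q^{2s_k+r'}x$ and then performing the reflection $j\mapsto s_k-j$, using the $q$-binomial symmetry $\qbinom{a}{b}_{q^2}=\qbinom{a}{a-b}_{q^2}$, converts the upper binomial index into $2j+k$ and the exponent of $x$ into $2j+k$; the quadratic $q$-exponent that emerges from the shift is precisely $2s_k^2+k+2j(j+r'+k)$, so factoring out $q^{2s_k^2+k}$ reproduces the sum above. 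Dividing by $q^{2s_k^2+k}$ and by the rational factor from step two gives the stated formula.

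The main obstacle is not any one of these three mechanical steps but the arithmetic bookkeeping that ties them together: one must verify uniformly across all six residues of $r$ that the linear and constant parts of the $q$-exponent produced by the reflection $j\mapsto s_k-j$ cancel exactly against the prefactor $q^{2s_k+r'}$ coming from $y$, and that the parities involved in $r'$, $\bar r$ and $\tilde r$ combine so that the integer shift $s_k+k$ appearing in the upper binomial index on the $A$-side is the same as the index $s_k+k+i$ produced on the $S$-side. Once this case analysis is completed in a single uniform identity the proposition follows.
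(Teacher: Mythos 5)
Your proposal is correct and takes essentially the same route as the paper: substitute the formula of Proposition~\ref{1to2and2to3}(ii) with $p\mapsto 2p+k$, factor the double sum via the shift $m=p-j$ and the identity \eqref{ident} (with $q\to q^2$), and identify the remaining single sum with $S_{2s_k+1}(y,q^2)_k$ through the reflection $j\mapsto s_k-j$ and the symmetry of the $q$-binomial — exactly the computation the paper carries out for $k=0$ and declares ``similar'' for $k=1$. Your off-by-one observation about the Pochhammer factor (the identity really produces $(x^2;q^2)_{2s+\lfloor r/3\rfloor+1}$) does point to a typo in the stated proposition, and for $k=1$ the constant exponent produced by the reflection is $2s_1^2+r'$ rather than $2s_1^2+1$, so the prefactor in the statement matches the computation only when $r\in\{1,4\}$; this is an issue with the printed statement, not with your method.
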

\begin{proof}
Using the formulae in Proposition~\ref{1to2and2to3}(ii) we get
  \begin{align*}
A_{6s+r}^{2 \rightarrow 3, w}(x,q)_0&
=\sum_{p,j\ge 0}q^{2j(j+r')}\qbinom{2s+\tilde r+p-j}{2s+\tilde r}_{q^2}\qbinom{s+j+\bar r}{2j}_{q^2}x^{2p}
&\\&=\sum_{i\geq 0}\qbinom{2s+\tilde r+i}{i}_{q^2}x^{2i}\left(\sum_{j\geq0} q^{2j(j+r')}\qbinom{s_0+j}{2j}_{q^2}x^{2j}\right)&\\&=\frac {1}{(x^2;q^2)_{2s+\lfloor \frac{r}{3} \rfloor}}\left(\sum_{j\geq0} q^{2j(j+r')}\qbinom{s_0+j}{2j}_{q^2}x^{2j}\right)&\\&
=\frac{q^{-2s_0^2}}{(x^2;q^2)_{2s+\lfloor \frac{r}{3} \rfloor}}\ S_{2s_0+1}(y,q^2)_0.\end{align*}
The remaining case works similarly. \end{proof}

\subsubsection{}
The generating series can also be viewed as limits of hypergeometric series and we thank George Andrews for helping us with this observation. In particular,
\begin{align*}(x^2;q^2)_{\lfloor \frac{n}{3} \rfloor+1}&A_{6s+r}^{2 \rightarrow 3, w}(x,q)_0&\\&
=\sum_{j\ge 0}q^{2j(j+r')}\frac{(-1)^jq^{2s_0j-j^2+j}(q^{2s_0+2};q^2)_j(q^{-2s_0};q^2)_j}{(q^2;q^2)_{2j}}x^{2j}&\\&
=\sum_{j\ge 0}q^{2j(s_0+r')+j(j+1)}(-1)^j\frac{(q^{2s_0+2};q^2)_j(q^{-2s_0};q^2)_j}{(q^2;q^2)_j(-q^2;q^2)_j(q;q^2)_j(-q;q^2)_j}x^{2j}&\\&=\lim_{t\to0}\ \pFq{4}{3}{q/t,t,q^{2s_0+2},q^{-2s_0}}{-q^2,q,-q}{q^2,tx^2q^{2(s_0+r')+1}}.\end{align*}

\subsection{} Our final collection of results discuss the generating functions for the numerical multiplicities, namely  we set $$A_n^{m' \rightarrow m}(x):=\sum_{p\geq 0}[D(m',n+p):D(m,n)]_{q=1}x^p,$$ and study these function for $m'=1$ and also when $m=m'+1$. In both cases they are rational functions in $x$; moreover in the first case they are related to the Chebyshev polynomial  of the second kind as we now discuss. For $n\in\bz_+$ define  polynomials $a_n(x)$ by  $a_0(x)=a_1(x)=1$ and for $n\geq 2$ 
 \begin{equation}\label{defa_n}
a_n(x) =
\begin{cases}
a_{n-1}(x)-xa_{n-2}(x) & \text{ if } n \text{ is odd},\\
(1+x)a_{n-1}(x)-xa_{n-2}(x)  & \text{ if } n \text{ is even}.\\
\end{cases}
\end{equation} We shall prove, \begin{thm}\label{thmgenser1} For  $s\in\bz_+$ and $r\in\{0,\dots,m-1\}$  we have 
$$A_{ms+r}^{1 \rightarrow m}(x)=\dfrac{a_{2m-2r-1}(x)}{(a_m(x) a_{m+1}(x))^{s+1}},$$ if $\lfloor\frac m2\rfloor \leq r  \leq m-1$ and $$A_{ms+r}^{1 \rightarrow m}(x)=\dfrac{a_m(x) a_{m- 2r-1}(x)}{(a_m(x) a_{m+1}(x))^{s+1}},$$ if $ 0 \leq r \leq \lfloor \frac{m}{2} \rfloor -1$.

\end{thm}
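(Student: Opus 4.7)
The plan is to prove Theorem~\ref{thmgenser1} in two stages: first derive a system of linear recurrences among the generating functions $A_n^{1\to m}(x)$ by using the representation theory of Section~\ref{section5}, and then verify that the proposed closed forms solve this system using the defining recurrence \eqref{defa_n} for $a_n(x)$. The shape of the claimed answer, in which the denominator $(a_m(x)a_{m+1}(x))^{s+1}$ is independent of $r$ and grows by one factor each time $n$ is incremented by $m$, already suggests a clean period-$m$ multiplicative relation $A_{n+m}^{1\to m}(x) = A_n^{1\to m}(x)/(a_m(x)a_{m+1}(x))$, while the $r$-dependent numerators encode the $m$ base values $A_0^{1\to m}(x),\dots,A_{m-1}^{1\to m}(x)$.

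For the recurrences, I would use the generating relations \eqref{weylreel} (together with the two lines immediately following it) for $D(1,N)$ to construct short exact sequences of $\CG$-modules of the form
\[
0 \longrightarrow \tau^*_{k_N}D(1, N-2) \longrightarrow D(1,N) \longrightarrow Q_N \longrightarrow 0,
\]
with $Q_N$ a suitable quotient related to $D(1,N-1)$ up to a grade shift, analogous to but more elaborate than the sequences used in \cite{CSSW14,BCSV15}. The additivity of $[-:D(m,n)]_q$ along short exact sequences and specialization at $q=1$ then convert this into a two-term recurrence expressing $A_N^{1\to m}(x)$ in terms of $A_{N-1}^{1\to m}(x)$ and $A_{N-2}^{1\to m}(x)$. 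The shape of the coefficients changes according to the residue of $N$ modulo $m$, and in particular a parity-type switch mirrors the case distinction between odd and even $n$ in \eqref{defa_n}.

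The verification step is algebraic. Substituting the two closed forms into each case of the recurrence cancels the common denominator and leaves polynomial identities among products $a_i(x)a_j(x)$, which are reducible using \eqref{defa_n} together with a Cassini-type identity of the form $a_n(x) a_{n-2}(x) - a_{n-1}(x)^2 = \pm x^{n-1}$, derivable by induction from \eqref{defa_n}; this identity is what produces the period factor $a_m(x)a_{m+1}(x)$ in the denominator. The base cases $A_0^{1\to m}(x),\dots,A_{m-1}^{1\to m}(x)$ can be obtained either by iterating the recurrence downward using the terminal conditions $A_{-1}^{1\to m}(x)=A_{-2}^{1\to m}(x)=0$ from \eqref{dies0}, or from the observation that for $0 \le j \le m-1$ the Demazure modules $D(1,j)$ and $D(m,j)$ coincide (by the defining relations in Section~\ref{demdefi}), which directly yields $A_r^{1\to m}(x)$ for small $r$. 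The check that the two case formulas meet correctly at the boundary $r=\lfloor m/2 \rfloor$ is then an algebraic identity in $a_n(x)$.

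The main obstacle is the construction of the short exact sequences: while \cite{CSSW14} handles the untwisted $\tt A_1^{(1)}$ case cleanly, the twisted algebra $\CG$ of type $\tt A_2^{(2)}$ contains both the integer-graded generators $y_{2r}$ and the half-integer-graded generators $y_{r+\frac{1}{2}}$, and each family contributes a separate relation in the presentation of $D(1,N)$ that must be handled. The broader Demazure-flag theorem proved in Section~\ref{section5} is what should supply the required intermediate modules as flag quotients. Once the exact sequences are in place, the remaining case analysis on $r$ (splitting at $\lfloor m/2 \rfloor$) and the induction on $s$ become a routine manipulation of the Chebyshev-like polynomials $a_n(x)$.
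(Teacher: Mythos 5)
Your verification half (substitute the closed forms into a recursion for the $A_n^{1\to m}$ and reduce to polynomial identities among the $a_n(x)$, inducting on $s$) matches the paper's strategy, but the mechanism you propose for producing the recursion does not work, and this is where the real content lies. You want short exact sequences $0\to\tau^*_{k_N}D(1,N-2)\to D(1,N)\to Q_N\to 0$ with $Q_N$ ``related to $D(1,N-1)$''. First, no such sequence is constructed, and in the form suggested it cannot exist: if $Q_N$ admitted a level--one flag then $D(1,N)$ would have a level--one flag with at least two sections, contradicting $[D(1,s):D(1,n)]_q=\delta_{n,s}$ in \eqref{dies1} (and the $\lie g$--characters do not match either, since $\ch_{\lie g}D(1,N)=(\ch_{\lie g}D(1,1))^N$ with $D(1,1)\cong V(1)\oplus\tau_1^*V(0)$). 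Second, and more structurally: any filtration of the \emph{source} module $D(1,N)$ only yields relations $[D(1,N):D(m,n)]=[D(1,N-2):D(m,n)]+[Q_N:D(m,n)]$ in which the target weight $n$ is unchanged. For fixed $n$ this is a bounded-order linear recurrence (with coefficients periodic in $N$) for the coefficient sequence $p\mapsto[D(1,n+p):D(m,n)]_{q=1}$ of the single series $A_n^{1\to m}(x)$, which would force $A_n^{1\to m}(x)$ to be rational with denominator degree bounded independently of $n$. That is incompatible with the asserted answer, whose pole order $(a_m(x)a_{m+1}(x))^{s+1}$ grows with $s$ while the numerator degree stays bounded. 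What is actually needed is a recursion in which the \emph{target} weight moves, i.e.\ relating $A_n^{1\to m}$, $A_{n+1}^{1\to m}$, $A_{n+2}^{1\to m}$; the paper gets this (Proposition~\ref{genserrec}) not from exact sequences of level--one modules but from the $\lie g$--character identity of Lemma~\ref{cruciallemma}, a Pieri-type rule for $\ch_{\lie g}D(m,p)\cdot\ch_{\lie g}D(1,1)$ obtained by decomposing Demazure modules into irreducible $\lie g$--modules, combined with $\ch_{\lie g}D(1,s)\,\ch_{\lie g}D(1,1)=\ch_{\lie g}D(1,s+1)$ and re-expansion in level-$m$ Demazure characters.

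Several of your auxiliary claims are also false. The Cassini-type identity $a_na_{n-2}-a_{n-1}^2=\pm x^{n-1}$ fails for the interleaved recurrence \eqref{defa_n} (e.g.\ $a_4a_2-a_3^2=x-2x^2$); the paper instead verifies six explicit identities for the polynomials $N_{m,r}(x)$ together with $N_{m,m-1}(x)a_m(x)a_{m+1}(x)=(1-x)N_{m,0}(x)-(1-\delta_{m,2})x^2N_{m,1}(x)$, which is what produces the extra factor of $a_m a_{m+1}$ per period. Your base-case shortcut ``$D(1,j)\cong D(m,j)$ for $0\le j\le m-1$'' is wrong already at $j=1$ for $m\ge 2$ ($D(1,1)$ is three--dimensional, $D(m,1)\cong V(1)$ is two--dimensional), and indeed $A_r^{1\to m}(x)$ is a nontrivial rational function even for $r<m$, so the base values cannot come from a coincidence of modules; moreover the terminal convention used in the paper is $A_{-1}^{1\to m}(x)=1$, not $A_{-1}=A_{-2}=0$, and the latter would collapse the recursion to zero. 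So while your overall outline (recursion plus algebraic verification) is in the right spirit, the derivation of the correct recursion --- the heart of the proof --- is missing and the route you propose for it cannot succeed.
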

\begin{rem} The connection with Chebyshev polynomials is made as follows. Consider the following recurrences: $$U_{n+1}(x)=2xU_n(x)-U_{n-1}(x), \ \ U_0(x)=1, \ \ U_1(x)=2x.$$   Let $P_n(x)$ be the polynomials defined by the recurrence 
$$P_0(x)=P_1(x)=1,\ P_{n+1}(x)=P_n(x)-xP_{n-1}(x) \text{ for } n \geq 1.$$ It is known that the Chebyshev polynomials of the second kind satisfy the recurrences for $U$ and that $P_n(x^2)=x^n U_n((2x)^{-1})$.
It is not hard to see that the polynomials $a_n(x)$ are given by
$$a_n(x)=(1+x)^{\lfloor \frac{n}{2}\rfloor }P_n\left(\dfrac{x}{1+x}\right),\ n\geq 0.$$ Basically one just checks that the right hand side of the preceding equation satisfies the same recurrence relations as the $a_n(x)$. It is also useful to note here that
$a_{2n}(x)=a_{2n-1}(x)-x^2 a_{2n-3}(x)$ and that 
$a_n(x)=(1-x)a_{n-2}(x)-x^2 a_{n-4}(x)$ for $n\geq 4$.
\end{rem}

%%%%%%%%%%%%%%%%%%%%%%%%%%%%%%%%%%%%%%%%%%%%%%%%%%%%%%%%%%%%%%%%%%%%%%%%%%%%%%%%%%%%%%%%%%%%%%%%%%%%%%%%%%%%%%%%%%%%%%%%%%%%%%%%%%%%%%%%

%%%%%%%%%%%%%%%%%%%%%%%%%%%%%%%%%%%%%%%%%%%%%%%%%%%%%%%%%%%%%%%%%%%%%%%%%%%%%%%%%%%%%%%%%%%%%%%%%%%%%%%%%%%%%%%%%%%%%%%%%%%%%%%%%%%%%%%%

%%%%%%%%%%%%%%%%%%%%%%%%%%%%%%%%%%%%%%%%%%%%%%%%%%%%%%%%%%%%%%%%%%%%%%%%%%%%%%%%%%%%%%%%%%%%%%%%%%%%%%%%%%%%%%%%%%%%%%%%%%%%%%%%%%%%%%%%
 
\section{The modules \texorpdfstring{$V(\bxi)$}{V} and dimension bounds} \label{section4}
In this section we state the  more general  version of Theorem \ref{mainsecthm1}.

\subsection{}
Let $\cal P_\ell$ be the set of all partitions $\bxi$ of length $\ell+1$ such that the following holds: \begin{equation}\label{partform}\bxi=(\xi_0\geq\underbrace{(\xi+1)\ge \cdots\ge(\xi+1)}_{\ell-1-p}\ge\underbrace{\xi\ge \cdots\ge\xi}_{p}\ge \xi_{\ell}> 0)\end{equation} where either $p=0$ and $\ell=1$ or $1\le p\le \ell-1$ if $\ell>1$.  For $\bxi=(\xi_0\ge\xi_1\ge\cdots \ge \xi_\ell)\in\cal P_\ell$, set  $$|\bxi|=\sum_{j\ge 1}\xi_j,\qquad 
\phi(\bxi;k):=\begin{cases}
\sum_{j\geq k+1} \xi_{j}\ -\ \frac{1}{2}\xi_{k+1},& \text{ if $0\leq k\leq \ell-2$ }\\
\big(\xi_{\ell}-\frac{1}{2}\xi_{\ell-1}\big)_{+},& \text{ if $k=\ell-1$}\\
\ \  0 & \text{ else.}\end{cases}
$$ 
Define a partial order on $\cal P=\cup_{\ell\in\bn}\cal P_\ell$ by:  for $\bxi_j\in\cal P_{\ell_j}$, $j=1,2$, we say that 
 \begin{equation}\label{ord}\bxi_1 \prec \bxi_2  \iff  {\rm{either}} \ \ell_1<\ell_2\ \ {\rm{or}} \ \ \ell_1=\ell_2\ \ {\rm{and}} \ \  \bxi_1 < \bxi_2,\end{equation} where $<$ denotes the usual  reverse lexicographic order on partitions.

\subsection{} We introduce the main objects of this paper.
\begin{defn}\label{third} Given $\bxi\in\cal P_\ell$ with $|\bxi|=n$, 
we define $V(\bxi)$ to be the graded quotient of $D(1,n)$ by the submodule generated by the graded elements,
\begin{align}& y_{2\ell}v_{n},\  \ \  y_{\ell+\frac{1}{2}}v_{n},&\\&
\label{finer2}
 y_{2\ell-2}^{(\xi_{\ell}+1)}v_{n},\ \ {\rm{if}}\ \  \xi_{\ell}+1<\xi_{\ell-1} ,\ \ \  y_{\ell-\frac{1}{2}}^{(2\phi(\bxi;\ell-1)+1)}v_{n},\ \ {\rm{if}}\ \ \ \xi_{\ell}<\xi_{\ell-1}.
\end{align}\end{defn}
 If  $n_1\in\mathbb{Z}$ with $n_1\geq -1$ and $n=n_1m+n_0,\ 0<n_0\leq m$ then we have an isomorphism of graded $\CG$--modules
\begin{equation}\label{specialcase}V(\bxi)\cong D(m,n),\ \  \  \bxi=(\underbrace{m\ge \cdots\ge m}_{n_1+1}\ge n_0).\end{equation}
\begin{rem}\label{remh}
Note that the definition of $V(\bxi)$ is independent of $\xi_0$ unless $\ell=1$ and $2\xi_{1}>\xi_{0}$. In what follows we denote by $v_{\bxi}$ the cyclic generator of $V(\bxi)$.
\end{rem}

\subsection{} We now state the more general version of Theorem \ref{mainsecthm1}; the proof of this theorem can be found in the next section. 
\begin{thm}\label{mainthm1} For  $\bxi\in\mathcal{P}_{\ell}$, we have an isomorphism of $\lie g$--modules $$V(\bxi)\cong D\big(\xi_1,\xi_1\big)^{\otimes (\ell-1-p)}\otimes D(\xi_{\ell-1},\xi_{\ell-1})^{\otimes p}\otimes D(\xi_{\ell-1},\xi_{\ell}).$$
Further, the  $\CG$--module $V(\bxi)$ admits a level $m$--Demazure flag if and only if $m\geq \xi_0$.
\end{thm}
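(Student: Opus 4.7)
The plan is to prove the two assertions in tandem, with the $\lie g$--module structure providing the dimension bounds that drive the flag argument.

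\textbf{The $\lie g$--module isomorphism.} I would establish matching upper and lower bounds on the $\lie g$--character of $V(\bxi)$. For the upper bound, a PBW--style spanning argument suffices: the relations $y_{2\ell} v_\bxi = 0$ and $y_{\ell+1/2} v_\bxi = 0$ from Definition~\ref{third}, combined with the vanishing of positive root vectors, force the negative real root vectors of grade $\geq 2\ell$ to annihilate $v_\bxi$, while the power conditions in \eqref{finer2} further bound the actions of $y_{2\ell-2}$ and $y_{\ell-1/2}$. Together with the level--one relations inherited from $D(1,|\bxi|)$, these yield a spanning set for $V(\bxi)$ of cardinality at most $\bigl(\dim D(\xi_1,\xi_1)\bigr)^{\ell-1-p}\cdot \bigl(\dim D(\xi_{\ell-1},\xi_{\ell-1})\bigr)^{p}\cdot \dim D(\xi_{\ell-1},\xi_\ell)$. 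For the matching lower bound, I would realize the tensor product on the right--hand side as a cyclic $\CG$--module via a fusion--product construction, taking a suitable limit of tensor products of evaluation modules of these Demazure modules at distinct points of $\bc^\times$, and verify that its cyclic vector satisfies the defining relations of $V(\bxi)$. This produces a surjection in the opposite direction, forcing equality of dimensions and hence the claimed $\lie g$--module isomorphism.

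\textbf{Existence of the flag for $m \geq \xi_0$.} I would induct on $\bxi$ with respect to the partial order $\prec$ of \eqref{ord}. The base case is when $\bxi$ has the rectangular form appearing in \eqref{specialcase}, in which case $V(\bxi)\cong D(m,|\bxi|)$ and the trivial one--step flag works. For the inductive step, given a non--rectangular $\bxi$ with $\xi_0 \leq m$, I would exhibit a proper graded $\CG$--submodule $W \subset V(\bxi)$ generated by a carefully chosen vector of the form $y_{2(\ell-1)}^{k_1} y_{\ell-1/2}^{k_2} v_\bxi$, and identify $W\cong V(\bxi')$ and $V(\bxi)/W \cong V(\bxi'')$ (or a Demazure module in degenerate cases) with $\bxi', \bxi'' \prec \bxi$ and $\xi'_0,\xi''_0 \leq m$. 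The induction hypothesis then provides level $m$ flags on both $W$ and $V(\bxi)/W$, which splice to give one on $V(\bxi)$. The main obstacle, and the most technical part of the argument, is the identification of the correct generator of $W$ and the verification of these isomorphism types, using the relations from \cite{KV14}; the presence of both integer and half--integer imaginary grades in $\CG$ for twisted type $\tt A_2^{(2)}$ makes this substantially more involved than the $\tt A_1^{(1)}$ analog handled in \cite{CSSW14}.

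\textbf{Non--existence of the flag for $m < \xi_0$.} Suppose $V(\bxi)$ admits a level $m$ Demazure flag. Since the cyclic generator $v_\bxi$ has $h_0$--weight $|\bxi|$ and must map to the generator of the top section of the flag, that top quotient is $D(m,|\bxi|)$ (up to degree shift), yielding a surjection $V(\bxi) \twoheadrightarrow D(m,|\bxi|)$. Hence all defining relations of $D(m,|\bxi|)$ must be consequences of those of $V(\bxi)$. Writing $|\bxi| = mn_1 + n_0$ with $0 < n_0 \leq m$, the Demazure relations $y_{2n_1+2}v_{|\bxi|}=0$ and $y_{2n_1}^{n_0+1}v_{|\bxi|}=0$ (when $n_0 < m$), when compared with the explicit $\lie g$--character obtained in the first part, force an inequality among $m$, $n_1$ and the entries of $\bxi$ whose only solutions satisfy $m \geq \xi_0$. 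This final comparison is a direct case analysis once the $\lie g$--module decomposition is in hand.
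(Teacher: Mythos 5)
Your overall architecture for the flag (induction on the order $\prec$, submodules generated by explicit vectors of the form $y_{2\ell-2}^{k_1}y_{\ell-\frac12}^{k_2}v_\bxi$, identification of subquotients with smaller $V(\bxi')$'s, splicing of flags, dimension comparison against a surjection onto the tensor/fusion product) is the paper's strategy, and your lower bound via the fusion-product surjection is exactly Proposition~\ref{genmax2} and Remark~\ref{remsim}. The genuine gap is your first step: the claim that a ``PBW--style spanning argument'' from the relations of Definition~\ref{third} (vanishing of $y_{2\ell}$, $y_{\ell+\frac12}$, bounded powers of $y_{2\ell-2}$, $y_{\ell-\frac12}$, plus the level-one relations) yields a spanning set of cardinality $\bigl(\dim D(\xi_1,\xi_1)\bigr)^{\ell-1-p}\bigl(\dim D(\xi_{\ell-1},\xi_{\ell-1})\bigr)^{p}\dim D(\xi_{\ell-1},\xi_\ell)$ is unsubstantiated, and it is precisely the hard content of the theorem. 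The relations you list see only $\ell$, $\xi_{\ell-1}$, $\xi_\ell$; the dimension depends on the whole partition, and the extra information enters only through the Garland-type relations \eqref{third1}--\eqref{third2} and the derived identities of Lemma~\ref{hilfslem}, whose consequences are extracted not by a direct monomial count but by constructing the filtration of $\ker\varphi^+$ (Propositions~\ref{propker}, \ref{case1}, \ref{case2}) and invoking the induction hypothesis; the upper bound then matches the lower bound \eqref{dimest} after a nontrivial computation. No direct spanning proof of these dimension formulas is known even in the untwisted $\tt A_1^{(1)}$ setting of \cite{CSSW14,CV13}. Your plan is also circular relative to the actual logic: you want the character identity to ``drive the flag argument,'' but in the paper it is the flag-type filtration that produces the missing upper bound, so the two statements must be proved simultaneously in the induction, not sequentially.

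Two secondary points. First, your base case is misstated: for $\ell=1$ with $2\xi_1>\xi_0$ and $\xi_0<m$, $V(\bxi)\cong D(\xi_0,\xi_1)$ is not itself a level-$m$ Demazure module and the one-step flag fails; one needs the filtration by powers of $y_{\frac12}$ as in \ref{inistep} (equation \eqref{filt}), and similarly your single submodule $W$ with $W\cong V(\bxi')$ and $V(\bxi)/W\cong V(\bxi'')$ understates the inductive step: the quotient is $V(\bxi^+)$, but $\ker\varphi^+$ in general needs a filtration with several subquotients $V(\bxi(j))$ (up to $k(\bxi)+2$ pieces in Proposition~\ref{case1}). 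Second, in the necessity argument your implication is reversed: since $D(m,|\bxi|)$ is the top \emph{quotient} of the flag, the defining relations of $V(\bxi)$ must hold for the Demazure highest weight generator (not the other way around), and the contradiction for $m<\xi_0$ comes from comparing the annihilating powers $y_{2k}^{(r+1)}v_\bxi=0$ of \eqref{cthird} with the known nonvanishing in $D(m,|\bxi|)$; this slip is fixable, and the paper itself omits this direction, deferring to \cite[Lemma 3.7]{CSSW14} in Remark~\ref{remsim}.
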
 
 \begin{rem}\label{remsim}\mbox{}
 \begin{enumerate}
 \item More generally the isomorphism  in Theorem~\ref{mainthm1} is  of $\CG$--modules if we replace the tensor product by the fusion product; we refer the reader to \cite{FL99} for the definition and properties of fusion products.
Hence our result gives a presentation of a certain class of fusion products of different level Demazure modules. 
\item If $V(\bxi)$ admits a level $m$ Demazure flag, then we have $m\geq \xi_0$. This implication can be proven similarly as in \cite[Lemma 3.7]{CSSW14} and the details will be omitted. 
\end{enumerate}
\end{rem}

\subsection{}  The modules $V(\bxi)$, $\bxi\in\cal P_\ell$ admit another realization which we now discuss.  We shall need some  notation.
For $s,r\in\bn$, $k\in\mathbb{Z}_+$ let 
\begin{equation*}\label{maxs}\bs_{\geq k}(r,s)=\Big\{\bold b=(b_j)_{\substack{j\in\bz_+ \\ j\ge k}}:  b_j\in\bz_+,\ \   \sum_{j\ge k} b_j=r,\ \ \ \ \sum_{j\ge k} jb_j=s\Big\}, \end{equation*}
and for $(r,s)\in\bold N$, $\tilde k\in\mathbb{Z}_+/2$ let $$\widetilde{\bs}_{\geq \tilde k}(r,s) =\Big\{\widetilde\bob=( \tilde b_j)_{\substack{2j\in\bz_+ \\ j\ge \tilde k}} :\tilde  b_j\in\bz_+ ,\ \   \sum_{j\ge \tilde k}(\tilde b_{j+1/2}+2\tilde b_j)=2r,\ \ \sum_{j\ge \tilde k}\big((j+\frac12)\tilde b_{j+\frac12}+2j\tilde b_j\big)=s\Big\}.$$
The following elementary calculation will be used repeatedly.

\begin{lem}\label{lamm}

Let $(r,s)\in\bold N$, and $\widetilde \bob\in \widetilde{\bs}_{\geq k-\frac{3}{2}}(r,s)$ (resp. $\widetilde \bob\in \widetilde{\bs}_{\geq k-\frac{5}{2}}(r,s)$) such that $\tilde b_j=0$ for all $j\geq k$. Then we have
$$\tilde b_{k-\frac{1}{2}}+\tilde b_{k-1}=s-2r(k-3/2),\quad \tilde b_{k-\frac{3}{2}}+\tilde b_{k-1}=2r(k-1/2)-s$$
\big(resp. $\tilde b_{k-\frac{3}{2}}+2\tilde b_{k-\frac{1}{2}}+\tilde b_{k-2}+3\tilde b_{k-1}=s-2r(k-5/2),\quad \tilde b_{k-\frac{5}{2}}-\tilde b_{k-\frac{1}{2}}+\tilde b_{k-2}-\tilde b_{k-1}=2r(k-3/2)-s$\big).
\end{lem}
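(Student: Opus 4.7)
The plan is to observe that Lemma~\ref{lamm} is a purely linear-algebraic statement: once the vanishing hypothesis $\tilde b_j = 0$ for $j \geq k$ is imposed, only finitely many of the $\tilde b_j$ can be nonzero, and the two defining equations of $\widetilde{\bs}_{\geq \tilde k}(r,s)$ become two linear equations in those finitely many variables. I expect the claimed identities to be particular linear combinations of these two equations.

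For the first case, with $\tilde k = k - \tfrac{3}{2}$, I will restrict the sums to the support $\{k-\tfrac{3}{2},\ k-1,\ k-\tfrac{1}{2}\}$ and expand the two defining equations explicitly in the variables $\tilde b_{k-3/2}$, $\tilde b_{k-1}$, $\tilde b_{k-1/2}$. This produces two linear equations in three unknowns, so there is a one-parameter family of linear consequences; the two claimed identities lie in this family, and I will verify them by taking the appropriate linear combinations that eliminate $\tilde b_{k-3/2}$ from one equation and $\tilde b_{k-1/2}$ from the other. As a sanity check, the sum of the two claimed identities should collapse to the count relation $\tilde b_{k-3/2} + 2\tilde b_{k-1} + \tilde b_{k-1/2} = 2r$.

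The second case (with $\tilde k = k - \tfrac{5}{2}$) proceeds identically but with five potentially nonzero variables $\tilde b_{k-5/2}, \tilde b_{k-2}, \tilde b_{k-3/2}, \tilde b_{k-1}, \tilde b_{k-1/2}$. The two claimed identities are again specific linear combinations of the expanded count and grade equations, and I will verify them by direct inspection of the coefficients; the first identity now carries the extra terms $2\tilde b_{k-1/2} + \tilde b_{k-2} + 3\tilde b_{k-1}$ exactly because both indices $k-1$ and $k-\tfrac{1}{2}$ contribute to the shifted sum $\sum_{j \geq k-5/2}$ in more than one way.

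The main difficulty, such as it is, is bookkeeping: one must expand $\sum_{j \geq \tilde k}(\tilde b_{j+1/2} + 2\tilde b_j)$ and $\sum_{j \geq \tilde k}\big((j+\tfrac{1}{2})\tilde b_{j+1/2} + 2j\tilde b_j\big)$ over the interleaved integer and half-integer indices and keep track of which $\tilde b$ appears with which coefficient. There is no representation-theoretic content here; the lemma is a direct calculation that sets up notation for counting PBW monomials in the later sections of the paper.
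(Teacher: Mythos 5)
Your proposal is correct and is exactly the computation the paper has in mind: the paper states Lemma~\ref{lamm} without proof as an ``elementary calculation,'' and your plan of restricting the two defining equations of $\widetilde{\bs}_{\geq \tilde k}(r,s)$ to the finite support and taking the linear combinations $s-2r(k-\tfrac32)$, $2r(k-\tfrac12)-s$ (resp.\ $s-2r(k-\tfrac52)$, $2r(k-\tfrac32)-s$) reproduces the stated identities, with your sanity check $\tilde b_{k-\frac32}+2\tilde b_{k-1}+\tilde b_{k-\frac12}=2r$ matching the count equation. Nothing is missing.
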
\qed
\subsection{} 
For any non--negative integer $b\in\bz_+$ and $x\in \CG$ set $x^{(b)}:=x^b/b!$. Let  $\bold y_{\geq k}(r,s),\ \widetilde{\bold y}_{\geq\tilde  k}(r,s)$ be the following elements of $\bu$:

\begin{align}\label{xlm}&\bold y_{\ge k}(r,s)=\sum_{\bold b\in\bs_{\geq k}(r,s)}y_{2k}^{(b_k)}\ y_{2k+2}^{(b_{k+1})}\cdots y_{2s}^{(b_s)},&\\&
\label{ydefn}\widetilde{\bold y}_{\geq \tilde k}(r,s)=\sum_{\bold{\widetilde{\bob}}\in \widetilde{\bs}_{\geq \tilde  k}(r,s)} \overset{\rightarrow}{\prod}_{n\geq \tilde  k}
\  (\widetilde{y}_{n+\frac12})^{(\tilde b_{n+\frac{1}{2}})}\ 
(\widetilde{y}_{2n})^{{(\tilde b_n)}},
\end{align}
where for $n\in\bz_+$ we set 
$$2^n\widetilde{y}_{n+\frac12}:=-y_{n+\frac12}\qquad 
2^{2n}\widetilde{y}_{2n}:=
\left((-1)^n-2\right)
y_{2n},
$$
and $\overset{\rightarrow}{\prod}_{n\geq 0}$ refers to the product of the specified factors written exactly in the increasing order of the indexing parameter. The following is proved along the same lines as Theorem 1 and Theorem 7 of \cite{KV14}.
\begin{prop}\label{genmax2}  For $\bxi\in\cal P_\ell$ with $|\bxi|=n$, the module $V(\bxi)$ is the quotient of $D(1,n)$ by the additional relations:
\begin{align}\label{third1}&\bold y_{\geq k}(r,s)v_n=0,\ \forall  s,r\in\bn,\ k\in \bz_+ \mbox{ with } \ \ s+r\ge 1+kr+\sum_{j\ge k+1}\xi_j,\\
\label{third2} &\widetilde{\bold y}_{\ge k+\frac{1}{2}}(r,s)v_n=0, \ \forall (r,s)\in\bold N, \ k\in\bz_+\mbox{ with} \ \ s\ge \frac{1}{2}+2kr+\phi(\bxi;k).\end{align} Moreover, there exists a surjective map of $\lie g$--modules  $$V(\bxi)\twoheadrightarrow D\big(\xi_1,\xi_1\big)^{\otimes (\ell-1-p)}\otimes D(\xi_{\ell-1},\xi_{\ell-1})^{\otimes p}\otimes D(\xi_{\ell-1},\xi_{\ell}).$$
\hfill\qedsymbol
\end{prop}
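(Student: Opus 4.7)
The plan is to follow the strategy used to prove Theorems~1 and~7 of \cite{KV14}, which handled the Demazure specialization \eqref{specialcase}. The argument splits into two independent pieces: (A) show that the extended families \eqref{third1}, \eqref{third2} hold on the cyclic vector $v_{n}$ of $V(\bxi)$; (B) construct a surjection of $\lie g$--modules onto the prescribed tensor product.

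For (A), I would proceed by induction on the ``excess'' $e=s+r-\big(1+kr+\sum_{j\ge k+1}\xi_{j}\big)$ for the integer family, and on $e'=s-\tfrac12-2kr-\phi(\bxi;k)$ for the half--integer family. The base cases fall back directly on the relations in Definition~\ref{third} together with $y_0^{n+1}v_n=0$, after using Lemma~\ref{lamm} to trade the support conditions $\tilde b_j=0$ for $j\ge k$ against the total degree data. The inductive step relies on generating--function identities in $\bu(\CG)$: one rewrites, modulo the left ideal generated by $\CN^{+}\oplus\CH_{+}$, a product of positive real root vectors acting on a lower--excess negative monomial as a linear combination of the elements $\bold y_{\ge k}(r,s)$ (resp.\ $\widetilde{\bold y}_{\ge \tilde k}(r,s)$). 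Applying such an identity to $v_n$ and using $(\CN^{+}\oplus\CH_{+})v_{n}=0$ converts a previously established, lower--excess relation into the desired higher--excess relation. The half--integer identity is also the mechanism by which the modifications $2^{n}\tilde y_{n+\frac12}=-y_{n+\frac12}$ and $2^{2n}\tilde y_{2n}=((-1)^n-2)y_{2n}$ enter: the sign $(-1)^n$ is produced by the twisted bracket $[x_{j+\frac12},y_{r+\frac12}]$ in $\tt A_2^{(2)}$, and the ``$-2$'' records the contribution from $[x_{j},y_{r}]$ in the integer--graded even part.

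For (B), the surjection is built using the fusion--product construction of Feigin--Loktev \cite{FL99}. Choose distinct parameters $z_{1},\dots,z_{\ell}\in\bc^{\times}$ and form the fusion product of the cyclic $\CG$--modules $D(\xi_{1},\xi_{1}),\dots,D(\xi_{\ell-1},\xi_{\ell-1}),D(\xi_{\ell-1},\xi_{\ell})$; as a $\lie g$--module this coincides with the ordinary tensor product $W$ appearing in the statement. The tensor product of cyclic vectors is a $\lie g$--highest weight vector of weight $n$ annihilated by $\CN^{+}\oplus\CH_{+}$ and by $y_0^{n+1}$. The relations $y_{2\ell}w=0$ and $y_{\ell+\frac12}w=0$ then reduce to the fact that these operators act on the fusion product as certain symmetric expressions of degree $\ell$ in the $\ell$ evaluation parameters, and the $\lie{sl}_2$--weight bound in each tensor factor forces such expressions to vanish. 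The Chevalley--type power relations \eqref{finer2} reduce analogously, using the constraint $\xi_{\ell}\le \xi_{\ell-1}$ built into the partition $\bxi$. This gives a $\CG$--module surjection $V(\bxi)\twoheadrightarrow W$, and in particular the claimed surjection of $\lie g$--modules.

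The main obstacle I anticipate is the derivation of the half--integer family \eqref{third2} in step (A). The twisted structure of $\tt A_{2}^{(2)}$ forces parity--dependent signs in the commutators of half--integer real root vectors, and the generating--function identity producing the exact weight $((-1)^n-2)/2^{2n}$ on $\tilde y_{2n}$ must respect this sign bookkeeping; moreover one must match the result against the combinatorics of the index set $\widetilde{\bs}_{\ge \tilde k}(r,s)$. Once the correct identity is isolated and Lemma~\ref{lamm} is invoked to prune the support, the double induction proceeds mechanically, but pinning down the identity with the right twisted coefficients is where the bulk of the technical work lies, and this is precisely the extra effort beyond \cite{KV14} alluded to in the introduction.
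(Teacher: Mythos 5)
Your overall architecture is the same as the paper's, which gives no independent argument and simply invokes the method of Theorems~1 and~7 of \cite{KV14}: derive the infinite families \eqref{third1}--\eqref{third2} from the finite presentation by twisted Garland--type identities, and obtain the surjection from the Feigin--Loktev fusion product \cite{FL99} of the Demazure factors, which coincides with the stated tensor product as a $\lie g$--module. Two points, however, need repair. First, a complete proof of the first assertion also requires the (easy, but necessary) converse observation that the defining relations of Definition~\ref{third} are themselves special cases of \eqref{third1}--\eqref{third2} (e.g.\ $y_{2\ell}v_n=\bold y_{\geq \ell}(1,\ell)v_n$ and $y_{2\ell-2}^{(\xi_\ell+1)}v_n=\bold y_{\geq \ell-1}(\xi_\ell+1,(\ell-1)(\xi_\ell+1))v_n$), so that the two quotients of $D(1,n)$ actually coincide; your sketch only addresses the hard inclusion.

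Second, and more seriously, the justification you give for the relations in step (B) is wrong as stated. The vanishing of $y_{2\ell}$ and $y_{\ell+\frac12}$ on the fusion generator is not forced by ``$\lie{sl}_2$--weight bounds on symmetric expressions in the evaluation parameters'': in the parameter--shifted tensor product, $y_{2\ell}$ sends $v_1\otimes\cdots\otimes v_\ell$ to $\sum_i z_i^{\ell}\,v_1\otimes\cdots\otimes y_0v_i\otimes\cdots\otimes v_\ell$, which is nonzero, and no weight argument kills it. The correct mechanism is that each Demazure factor satisfies $y_2v_i=0$ and $y_{\frac32}v_i=0$, i.e.\ after the shift its generator is annihilated by the elements of $y\otimes(t-z_i)$--type; hence $y\otimes\prod_{i=1}^{\ell}(t-z_i)$ (and its short--root analogue) annihilates the tensor product of the cyclic vectors, and in the associated graded only the leading term $y_{2\ell}$ (resp.\ $y_{\ell+\frac12}$) survives, which is precisely the filtration--degree argument underlying the fusion product. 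The divided--power relations \eqref{finer2} are not ``analogous'' one--liners either: they depend on the parameters $\xi_{\ell-1},\xi_\ell$ of the last factor and on $\phi(\bxi;\ell-1)$, and checking them on the fusion generator is part of the genuine technical content carried by \cite{KV14}. With these corrections your plan matches the intended proof; as written, the stated mechanism for (B) would fail.
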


It is immediate from the proposition  that
\begin{equation}\label{cthird}y_{2k}^{(r+1)}v_\bxi=0,\quad \big(\mbox{resp. } y_{k+\frac{1}{2}}^{(r+1)}v_\bxi=0\big)\end{equation}
for all $r,k\in\bz_+$ with $r\ge \sum_{j\ge k+1}\xi_j$ \big(resp. $r\ge 2\phi(\bxi;k)$\big).

%%%%%%%%%%%%%%%%%%%%%%%%%%%%%%%%%%%%%%%%%%%%%%%%%
%%%%%%%%%%%%%%%%%%%%%%%%%%%%%%%%%%%%%%%%%%%%%%%%%

%%%%%%%%%%%%%%%%%%%%%%%%%%%%%%%%%%%%%%%%%%%%%%%%%
%%%%%%%%%%%%%%%%%%%%%%%%%%%%%%%%%%%%%%%%%%%%%%%%%
\section{Demazure flags and recursive formulae} \label{section5}

In this section, we prove Theorem~\ref{mainthm1} by an induction on $\ell$.
\subsection{}\label{remdim}
 Recall from \cite{KV14} that the Demazure module $D(\xi_0,\xi_{1})$ is irreducible if and only if $2\xi_{1}\leq \xi_0$ and otherwise decomposes into irreducible finite--dimensional $\mathfrak{g}$--modules as follows
$$D(\xi_0,\xi_{1})\cong \tau^{*}_0V(\xi_1)\oplus \cdots \oplus \tau^{*}_{2\xi_1-\xi_0}V(\xi_0-\xi_{1}).$$
Hence $$\dim D(\xi_0,\xi_{1})= (\xi_1+1)+\frac{1}{2}\big((2\xi_1-\xi_0)_+(\xi_0+1)\big).$$

Thus Proposition~\ref{genmax2}  gives  a lower bound for the dimension of $V(\bxi)$, $\bxi\in\mathcal{P}_{\ell}$:
\begin{equation}\label{dimest}\dim V(\bxi)\geq\binom{\xi_1+2}{2}^{\ell-1-p}\binom{\xi_{\ell-1}+2}{2}^{p}\Big((\xi_{\ell}+1)+\frac{1}{2}\big((2\xi_1-\xi_{\ell-1})_+(\xi_{\ell-1}+1)\big)\Big).\end{equation}

\subsection{}\label{inistep}
To see that induction begins at $\ell=1$ we write  $\bxi=(\xi_0,\xi_1)$.  Equation \eqref{specialcase} implies that  $V(\bxi)\cong D(\xi_0,\xi_1)$ and hence the first statement of Theorem~\ref{mainthm1} holds in this case. If $2\xi_1\leq \xi_0$ or $\xi_0=m$ the second statement also holds since $D(\xi_0,\xi_1)\cong D(m,\xi_1)$. Otherwise consider the filtration of graded $\mathbf{U}$--modules
\begin{align}\label{filt}
0\subset \mathbf{U}y^{2\xi_1-\xi_0}_{\frac{1}{2}}v_{\bxi}\subset \mathbf{U}y^{2\xi_1-\xi_0-1}_{\frac{1}{2}}v_{\bxi}\subset\cdots\subset\mathbf{U}y^{2\xi_1-\xi_0-s}_{\frac{1}{2}}v_{\bxi}\subset V(\bxi),
\end{align}
where $s=(2\xi_1-\xi_0) - (2\xi_1-m)_+-1$. 
Using Definition~\ref{third} it is easily seen that the successive quotients of the filtration  in  \eqref{filt} are themselves  quotients of a Demazure module of the form $\tau^{*}_s D(m,\xi_1-s)$.  The dimension inequality in \eqref{dimest} now implies  these maps are isomorphisms and hence $V(\bxi)$ has a Demazure flag of level $m$ establishing the inductive step. Moreover we have also proved that 
\begin{equation}\label{anfan}[D(\xi_0,\xi_1),D(m,s)]_q=\begin{cases} q^{\xi_1-s},& \text{ if $s=\xi_1 \mbox{ or } (2\xi_1-m)_{+}< \xi_1-s \leq (2\xi_1-\xi_0)_{+}$}\\
0,& \text{else.}\end{cases}\end{equation}

\subsection{}\label{ideas} \textit{For the rest of this section we fix an arbitrary $\bxi\in\mathcal{P}_{\ell}$ with $\ell>1$ and assume that the main theorem holds for all $\btau\in \mathcal{P}$ with $\btau \prec\bxi$. We further suppose $\xi_0=\xi_1$ (see Remark~\ref{remh}).} 
\vskip 12pt

Define $\bxi^+\in\cal P$ by:
\begin{equation*}\bxi^+:=((\xi+1)^{\ell-p+1},\xi^{p-1},\xi_{\ell}-1).\end{equation*}
We have $\phi(\bxi^+; \ell-1)=(\xi_{\ell}-\frac12 \xi_{\ell-1}-1-\delta_{p,1}/2)_+$.
Noting that $\bxi^+\prec\bxi$, we have by the induction hypothesis 
that $V(\bxi^+)$ admits a level $m$ Demazure flag iff $m\geq \xi+1$. We now prove,
\begin{prop}\label{propker}  The assignment $v_\bxi\mapsto v_{\bxi^+}$  defines  a surjective morphism  $\varphi^+:V(\bxi)\to V(\bxi^+)\to 0$ of $\CG$--modules. Moreover $\ker\varphi^+$ is generated by the elements,
$$y_{2\ell-2}^{(\xi_\ell)}v_\bxi,\ \ \ \delta_{p,2}\delta_{\xi_\ell,1}y_{\ell-\frac32}^{(\xi_{\ell-1})}v_{\bxi} ,\ \ \  {\rm{if}}\ \ 2\xi_{\ell}\leq \xi_{\ell-1}$$
$$y_{\ell-\frac{3}{2}}v_{\bxi},\ \  {\rm{if}}\ \ \ p=2,\ \  \xi_\ell=1=\xi_{\ell-1},$$ and in all other cases 
by
$y_{\ell-\frac{1}{2}}^{((2\xi^+_\ell-\xi^+_{\ell-1})_++1)}v_{\bxi}.$ 

\begin{proof} A simple checking shows that $v_{\bxi^+}$ satisfies all the relations that $v_\bxi$ does and hence the existence of $\varphi^+$ is immediate. It is also immediate that
 $\ker\varphi^+$ is generated by the elements 
\begin{align}\label{propker1}
y_{2\ell-2}^{(\xi_\ell)}v_\bxi,\ \ \  y_{\ell-\frac{1}{2}}^{((2\xi^+_{\ell}-\xi_{\ell-1}^+)_++1)}v_{\bxi},\ \ 
\delta_{p,2}\delta_{\xi_\ell,1}y_{\ell-\frac{3}{2}}^{(\xi_{\ell-1})}v_{\bxi}.
\end{align}
If $2\xi_\ell\le \xi_{\ell-1}$ the result follows since  $y_{\ell-\frac12}v_\bxi=0$. 
If    $2\xi_\ell>\xi_{\ell-1}$, we   prove by a downward induction on $k$ that for all $0\leq k\leq (2\xi^+_{\ell}-\xi_{\ell-1}^+)_+$, we have  \begin{equation}\label{hilfst}y_{2\ell-2}^{(\xi_\ell-k)} y_{\ell-\frac{1}{2}}^{(k)}v_\bxi\in\bu  y_{\ell-\frac{1}{2}}^{((2\xi^+_{\ell}-\xi_{\ell-1}^+)_++1 )}v_{\bxi}.\end{equation}
Assuming we have done this, notice that by taking $k=0$ the proof is complete unless we are in the case of $p=2$ and $\xi_\ell=\xi_{\ell-1}=1$ when we also have to prove that $$y_{\ell-\frac12}v_\bxi\in\bu y_{\ell-\frac32}v_\bxi.$$But this is immediate by applying an element of $\CH_+$ of appropriate grade. We set $2r=2\xi_{\ell}-k$ and $s=2(\ell-1)\xi_\ell-(\ell-\frac{3}{2})k$. Note that
\begin{equation}\label{hhg}s\geq \frac{1}{2}+(2\xi_\ell-k)(\ell-2)+\frac{1}{2}\xi_{\ell-1}+\xi_\ell=\frac{1}{2}+2r(\ell-2)+\phi(\bxi,\ell-2).\end{equation} 
For an arbitrary element $\widetilde\bob\in \widetilde{\bs}_{\geq \ell-\frac{3}{2}}(r,s)$ with $\tilde b_j=0$ for $j\geq \ell$ we get with Lemma~\ref{lamm}
$$\tilde b_{\ell-\frac{1}{2}}+\tilde b_{\ell-1}=\xi_{\ell},\quad \tilde b_{\ell-\frac{3}{2}}+\tilde b_{\ell-1}=\xi_\ell-k,$$
which implies $\tilde b_{\ell-\frac{3}{2}}=\tilde b_{\ell-\frac{1}{2}}-k\geq 0$.
Hence, together with \eqref{third2} and \eqref{hhg} we get 
\begin{align}\label{us3345}y_{2\ell-2}^{(\xi_\ell-k)}\ y_{\ell-\frac{1}{2}}^{(k)}v_\bxi\in\sum_{t>k} \mathbf{U} y_{2\ell-2}^{(\xi_\ell-t)}\ y_{\ell-\frac{1}{2}}^{(t)}v_\bxi.\end{align}
If $k=(2\xi^+_{\ell}-\xi_{\ell-1}^+)_+$ equation \eqref{hilfst} is immediate with \eqref{us3345}. Otherwise we know by the induction hypothesis that each summand in \eqref{us3345} has the desired property.
\end{proof}
\end{prop}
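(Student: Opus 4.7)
The strategy is to identify $\ker\varphi^+$ with the left $\mathbf{U}$-submodule of $V(\bxi)$ generated by the ``new'' defining relations of $V(\bxi^+)$, and then, in the regime $2\xi_\ell>\xi_{\ell-1}$, to reduce the list of generators to a single element by a downward induction that exploits the refined relations of Proposition~\ref{genmax2}.

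\emph{Step 1: Existence of $\varphi^+$.} First I would check that $v_{\bxi^+}$ satisfies every relation used to define $V(\bxi)$ in Definition~\ref{third} (with $n$ replaced by $n-1$). The Cartan and weight relations of \eqref{weylreel} transfer immediately because $|\bxi^+|=n-1$, and the finer relations \eqref{finer2} for $V(\bxi)$ hold at $v_{\bxi^+}$ since the relevant exponents associated to $\bxi$ dominate those associated to $\bxi^+$ (after a short case check on the two branches in the definition of $\phi(\,\cdot\,;\ell-1)$).

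\emph{Step 2: Generators of $\ker\varphi^+$.} Writing out the defining relations of $V(\bxi^+)$ and subtracting those already present in $V(\bxi)$ isolates three candidate generators:
$$y_{2\ell-2}^{(\xi_\ell)}v_\bxi \quad (\text{when } \xi_\ell<\xi_{\ell-1}),\qquad y_{\ell-\frac12}^{((2\xi_\ell^+-\xi_{\ell-1}^+)_++1)}v_\bxi,$$
together with the ``boundary'' element $\delta_{p,2}\delta_{\xi_\ell,1}\,y_{\ell-\frac32}^{(\xi_{\ell-1})}v_\bxi$, which appears precisely when $\bxi^+$ has its last two parts equal and so acquires an extra relation at position $\ell-2$ in its $\phi$-profile.

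\emph{Step 3: Case reductions.} If $2\xi_\ell\le\xi_{\ell-1}$, the second candidate is automatic because $y_{\ell-\frac12}v_\bxi=0$ already holds in $V(\bxi)$ by \eqref{finer2}, so the kernel is generated by $y_{2\ell-2}^{(\xi_\ell)}v_\bxi$ and (in the boundary case) the $y_{\ell-\frac32}$ term. The nontrivial direction is $2\xi_\ell>\xi_{\ell-1}$: here I would show by a downward induction on $k$, running from $k=(2\xi_\ell^+-\xi_{\ell-1}^+)_+$ down to $k=0$, that
$$y_{2\ell-2}^{(\xi_\ell-k)}y_{\ell-\frac12}^{(k)}v_\bxi\ \in\ \mathbf{U}\, y_{\ell-\frac12}^{((2\xi_\ell^+-\xi_{\ell-1}^+)_++1)}v_\bxi.$$
The key input is \eqref{third2} of Proposition~\ref{genmax2} applied with $(r,s)=\bigl((2\xi_\ell-k)/2,\ 2(\ell-1)\xi_\ell-(\ell-\tfrac32)k\bigr)$; a direct computation yields the required bound $s\ge \tfrac12+2r(\ell-2)+\phi(\bxi;\ell-2)$, and Lemma~\ref{lamm} forces $\tilde b_{\ell-3/2}=\tilde b_{\ell-1/2}-k\ge 0$, so $\widetilde{\mathbf y}_{\ge \ell-\frac32}(r,s)\,v_\bxi$ expresses the target $y_{2\ell-2}^{(\xi_\ell-k)}y_{\ell-\frac12}^{(k)}v_\bxi$ modulo monomials with strictly larger $y_{\ell-\frac12}$-power. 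Taking $k=0$ then finishes the reduction.

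\emph{Main obstacle.} The hard part is the combinatorial bookkeeping in Step 3. One must simultaneously (i) choose $(r,s)$ so that \eqref{third2} produces precisely the desired leading monomial, (ii) verify the quantitative inequality, and (iii) invoke Lemma~\ref{lamm} to guarantee that every competing monomial in $\widetilde{\mathbf y}_{\ge\ell-\frac32}(r,s)v_\bxi$ has strictly larger power of $y_{\ell-\frac12}$, which is exactly what the downward induction needs. A small additional maneuver is required in the degenerate subcase $p=2$, $\xi_\ell=\xi_{\ell-1}=1$, in which $\phi(\bxi^+;\ell-1)$ vanishes artificially and one must apply a suitable element of $\CH_+$ of the correct grade to pass between $y_{\ell-\frac12}v_\bxi$ and $y_{\ell-\frac32}v_\bxi$ in order to close the argument.
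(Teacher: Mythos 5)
Your proposal is correct and follows essentially the same route as the paper: identify the three candidate generators of $\ker\varphi^+$ from the extra relations of $V(\bxi^+)$, dispose of the case $2\xi_\ell\le\xi_{\ell-1}$ via $y_{\ell-\frac12}v_\bxi=0$, and in the case $2\xi_\ell>\xi_{\ell-1}$ run the same downward induction on $k$ with the same choice $2r=2\xi_\ell-k$, $s=2(\ell-1)\xi_\ell-(\ell-\tfrac32)k$, using \eqref{third2} together with Lemma~\ref{lamm}, and handling the degenerate subcase $p=2$, $\xi_\ell=\xi_{\ell-1}=1$ by acting with an element of $\CH_+$ of the appropriate grade.
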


In the rest of this section we shall show by doing a case by case analysis that $\ker\varphi^+$ has a filtration such that the successive quotients are of the form $V(\bxi')$, with  $\bxi'\prec\bxi$.  Since the induction hypothesis applies to the $\bxi'$ it follows that $\ker\varphi^+$ has a level $m$ Demazure flag and also allows us to get an upper bound for $V(\bxi)$; together with the lower bound established in Proposition \ref{genmax2} we then complete the inductive step.

\subsection{} Given  $\bxi=( (\xi+1)^{\ell-p},\xi^p,\xi_\ell)\in\mathcal{P}_\ell$, set  $$k(\bxi)=\begin{cases}
2,& \text{ if $2\xi_\ell-\xi_{\ell-1}\geq 3$ and $p=1$}\\
1,& \text{ if $2\xi_\ell-\xi_{\ell-1}=2$ or $2\xi_\ell-\xi_{\ell-1}\geq 3$ and $p>1$}\\
0,& \text{ if $2\xi_\ell-\xi_{\ell-1}=1$}\\
-1,& \text{ if $2\xi_\ell-\xi_{\ell-1}\leq 0$.}
\end{cases}$$ Equivalently, 
\begin{equation}\label{defnkxi}k(\bxi)=(2\xi_{\ell}-\xi_{\ell-1})_+-(2\xi^+_{\ell}-\xi^+_{\ell-1})_+-1.\end{equation}

For $ -1\leq j\leq k(\bxi)$, define  partitions $\bxi(j)$ as follows:
\begin{equation*}\bxi(j)=((\xi+1)^{\ell-p+1-\delta_{j,0}-\delta_{j,-1}},\xi^{p-1+\delta_{j,0}},\xi_{\ell-1}-\xi_\ell+\delta_{j,2}).\end{equation*} It is easily seen that $\bxi(j)\in\cal P$ and $\bxi(j)\prec\bxi$ for $-1\le j\le k(\bxi)$.

\subsection{} We analyze $\ker\varphi^+$ under the assumption that $2\xi_{\ell}>\xi_{\ell-1}$.
\begin{prop}\label{case1} Suppose that  $2\xi_{\ell}>\xi_{\ell-1}$. 
\begin{enumerit} 
\item[(i)] Let  $\delta_{\xi_{\ell},\xi_{\ell-1}}\delta_{p,2}=0$.  For  $-1\le j\le k(\bxi)$ there exists graded $\CG$--submodules $V_j\subset \ker\varphi^+ $  with \begin{gather*}V_{-1}\cong \delta_{\xi_\ell\xi_{\ell-1}}\delta_{p,1}\
\tau^{*}_{4\xi_{\ell}(\ell-1)}V(\bxi(-1)),\ \   \ \  V_{k(\bxi)}=\ker\varphi^+, \\ 
V_{j-1}\subset V_{j },\ \ \ \ V_j/V_{j-1}\cong \tau^*_{(2\ell-1)(2\xi_\ell-\xi_{\ell-1}-j)}V(\bxi(j)),
\ \ 0\le j\le k(\bxi).
 \end{gather*}
\item[(ii)]   Let  $\delta_{\xi_{\ell},\xi_{\ell-1}}\delta_{p,2}=1$. If $\xi_\ell=1$ 
we have a short exact sequence of graded $\CG$--modules
$$0\rightarrow \tau^*_{(2\ell-1)}V(\bxi(0))\rightarrow \ker\varphi_+ \rightarrow  \tau^*_{(2\ell-3)}V((\xi_{\ell-1}+1)^{\ell-1})\rightarrow 0.$$
If $\xi_\ell>1$ then there exists graded $\CG$--submodules $V_0\subset V_1$ of $\ker\varphi^+$ such that \begin{gather*}V_0\cong \tau^*_{\xi_{\ell}(2\ell-1)}V(\bxi(0)),\qquad V_1/V_0\cong \tau^*_{4\xi_{\ell}(\ell-1)-(2\ell-1)}V((\xi_{\ell-1}+1)^{\ell-1}),\\ \ker\varphi^+/V_1\cong \tau^*_{(\xi_{\ell}-1)(2\ell-1)}V(\bxi(1)).\end{gather*}

\end{enumerit}
\end{prop}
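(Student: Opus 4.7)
The plan is to realize $\ker\varphi^+$ as a strict filtration by explicit cyclic $\mathbf{U}$--submodules, produce surjections onto each successive quotient from the appropriate $V(\bxi(j))$, and upgrade these to isomorphisms by matching the dimension estimate \eqref{dimest} with a tally of $\dim V(\bxi(j))$ coming from the induction hypothesis applied to every $\bxi(j) \prec \bxi$. For part (i), I would set
$$V_j := \mathbf{U}\cdot y_{\ell-\frac{1}{2}}^{(2\xi_\ell - \xi_{\ell-1} - j)} v_\bxi, \qquad 0 \le j \le k(\bxi),$$
and additionally, in the subcase $\delta_{\xi_\ell,\xi_{\ell-1}}\delta_{p,1} = 1$, take $V_{-1} := \mathbf{U}\cdot y_{2\ell-2}^{(\xi_\ell)} v_\bxi$. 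The containments $V_{j-1}\subset V_j$ for $j\ge 1$ are immediate from the divided-power identity $y_{\ell-\frac{1}{2}}^{(n+1)} = \tfrac{1}{n+1}\, y_{\ell-\frac{1}{2}}\, y_{\ell-\frac{1}{2}}^{(n)}$, while $V_{-1}\subset V_0$ follows by rewriting $y_{2\ell-2}^{(\xi_\ell)}v_\bxi$ as a $\mathbf{U}$--multiple of $y_{\ell-\frac{1}{2}}^{(\xi_\ell)}v_\bxi$ through a suitable $\widetilde{\mathbf{y}}_{\ge \tilde k}(r,s)$ identity from \propref{genmax2}. Finally $V_{k(\bxi)} = \ker\varphi^+$ is immediate from \propref{propker} together with the identity $2\xi_\ell - \xi_{\ell-1} - k(\bxi) = (2\xi_\ell^+-\xi_{\ell-1}^+)_+ + 1$.

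The central step is to exhibit, for each $j \ge 0$, a surjection
$$\tau^*_{(2\ell-1)(2\xi_\ell - \xi_{\ell-1} - j)} V(\bxi(j)) \twoheadrightarrow V_j/V_{j-1}$$
sending $v_{\bxi(j)}$ to the image of the chosen generator, with the analogous surjection for $V_{-1}$. The lightweight defining relations from \eqref{weylreel}---$h_0 v = |\bxi(j)| v$, $(\CN^+\oplus\CH_+)v = 0$, and $y_0^{|\bxi(j)|+1}v = 0$---reduce to direct weight and degree computations combined with commuting each annihilator past $y_{\ell-\frac{1}{2}}$; the extra terms either kill $v_\bxi$ directly or have sufficiently large power of $y_{\ell-\frac{1}{2}}$ to land in $V_{j-1}$. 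The substantive work is to verify the heavy relations \eqref{finer2} for $\bxi(j)$, as well as the general Garland-type identities of \propref{genmax2} for $\bxi(j)$: these are deduced from the corresponding relations in $V(\bxi)$ by substituting carefully chosen parameters $(r,s,k)$ into $\mathbf{y}_{\ge k}(r,s)$ and $\widetilde{\mathbf{y}}_{\ge \tilde k}(r,s)$ and isolating the needed monomial via the bookkeeping of \lemref{lamm}; the unwanted monomials are shown inductively on $j$ to lie in $V_{j-1}$.

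With all the surjections in hand, the induction hypothesis of \thmref{mainthm1} yields the exact dimensions of $V(\bxi(j))$ and $V(\bxi^+)$ as tensor products of $\mathfrak{g}$--Demazure modules; summing and comparing with the lower bound \eqref{dimest} for $\dim V(\bxi)$ gives equality, which forces each surjection to be an isomorphism and each inclusion $V_{j-1}\subsetneq V_j$ to be strict. Part (ii) proceeds along the same blueprint: the short exact sequence in the $\xi_\ell = 1$ subcase reflects the alternate kernel generator $y_{\ell-\frac{3}{2}}v_\bxi$ from \propref{propker}, while for $\xi_\ell > 1$ the middle quotient $V((\xi_{\ell-1}+1)^{\ell-1})$ is exhibited as the subquotient generated by the class of $y_{2\ell-2}^{(\xi_\ell)}v_\bxi$, consistent with the predicted grade shift $4\xi_\ell(\ell-1)-(2\ell-1)$. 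The principal obstacle throughout will be the verification of the heavy defining relations: selecting the correct parameters in the Garland-type elements of \propref{genmax2}, and using \lemref{lamm} to show that the excess monomials actually land in $V_{j-1}$. Coordinating this bookkeeping across the boundary configurations---$\xi_\ell = \xi_{\ell-1}$ at various values of $p$, and especially the short-exact-sequence case of part (ii)---is where the bulk of the work lies.
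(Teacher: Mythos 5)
Your blueprint is the paper's own strategy: filter $\ker\varphi^+$ by the cyclic submodules $V_j=\mathbf{U}\,y_{\ell-\frac{1}{2}}^{(2\xi_\ell-\xi_{\ell-1}-j)}v_\bxi$, check the defining relations of $V(\bxi(j))$ on the images of the generators via the elements of Proposition~\ref{genmax2} and the bookkeeping of Lemma~\ref{lamm}, and close by comparing the resulting upper bound with the lower bound \eqref{dimest}. However, two of your concrete generator choices do not work as stated. In part (i) you set $V_{-1}=\mathbf{U}\,y_{2\ell-2}^{(\xi_\ell)}v_\bxi$ and claim $V_{-1}\subset V_0$ because $y_{2\ell-2}^{(\xi_\ell)}v_\bxi$ can be rewritten as a $\mathbf{U}$--multiple of $y_{\ell-\frac{1}{2}}^{(\xi_\ell)}v_\bxi$. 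The Garland-type manipulations (as in the proof of Proposition~\ref{propker}) only give $y_{2\ell-2}^{(\xi_\ell)}v_\bxi\in\mathbf{U}\,y_{\ell-\frac{1}{2}}^{((2\xi^+_\ell-\xi^+_{\ell-1})_++1)}v_\bxi=V_{k(\bxi)}$, which is much larger than $V_0$ when $k(\bxi)>0$ (e.g. $\xi_\ell=\xi_{\ell-1}\geq 2$, $p=1$), and the identity actually proved in Lemma~\ref{hilfslem}(iii) points in the opposite direction: $y_{\ell-\frac{3}{2}}^{(\xi_\ell)}y_{\ell-\frac{1}{2}}^{(\xi_\ell)}v_\bxi\in\mathbf{U}^-y_{2\ell-2}^{(\xi_\ell)}v_\bxi$. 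The paper avoids exactly this issue by taking $w_{-1}=\delta_{\xi_\ell\xi_{\ell-1}}\delta_{p,1}\,y_{\ell-\frac{3}{2}}^{(\xi_\ell)}y_{\ell-\frac{1}{2}}^{(\xi_\ell)}v_\bxi$ as the generator of $V_{-1}$ (manifestly in $V_0$) and then using Lemma~\ref{hilfslem}(i),(iii) to obtain the surjection from $\tau^*_{4\xi_\ell(\ell-1)}V(\bxi(-1))$; you would either have to prove your containment (which is neither proved in the paper nor obviously true) or switch to this generator.

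In part (ii) with $\xi_\ell>1$, the middle subquotient cannot be generated by the class of $y_{2\ell-2}^{(\xi_\ell)}v_\bxi$: that vector lies in degree $4\xi_\ell(\ell-1)$ and has $h_0$--weight $n-2\xi_\ell$, whereas a highest weight generator of $\tau^*_{4\xi_\ell(\ell-1)-(2\ell-1)}V((\xi_{\ell-1}+1)^{\ell-1})$ must lie in degree $4\xi_\ell(\ell-1)-(2\ell-1)$ and weight $n-2\xi_\ell+1$, so your claimed consistency with the grade shift fails. The correct choices (the paper's) are $w_0=y_{\ell-\frac{1}{2}}^{(\xi_\ell)}v_\bxi$, $w_1=y_{\ell-\frac{3}{2}}^{(\xi_\ell)}y_{\ell-\frac{1}{2}}^{(\xi_\ell-1)}v_\bxi$ and $w_2=(1-\delta_{\xi_\ell,1})y_{\ell-\frac{1}{2}}^{(\xi_\ell-1)}v_\bxi$, and beyond Lemma~\ref{hilfslem}(iii)--(iv) one must also verify $y_{2\ell-2}\,y_{\ell-\frac{1}{2}}^{(\xi_\ell-1)}v_\bxi\in\mathbf{U}^-y_{\ell-\frac{1}{2}}^{(\xi_\ell)}v_\bxi$ and $y_{2\ell-2}\,y_{\ell-\frac{1}{2}}^{(\xi_\ell)}v_\bxi=0$. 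Apart from these two misidentifications, and the fact that the heavy relation checks and the dimension tally are announced rather than carried out, your plan coincides with the proof in the paper.
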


\subsection{} To complete our analysis of $\ker\varphi^+$ we shall prove,
\begin{prop}\label{case2} Assume that $2\xi_\ell\le \xi_{\ell-1}$. 
\begin{enumerit}
\item[(i)] Suppose that $\delta_{\xi_\ell, 1}\delta_{p,2}=0$ and $ (1-\delta_{2\xi_\ell,\xi_{\ell-1}})\delta_{p,1}=0$. Then we have an isomorphism of graded $\CG$--modules
$$\tau^*_{4(\ell-1)\xi_{\ell}}V(\bxi(-1))\cong \ker \varphi^+.$$
\item[(ii)] Suppose that $\delta_{\xi_\ell, 1}\delta_{p,2}=0$ and $ (1-\delta_{2\xi_\ell,\xi_{\ell-1}})\delta_{p,1}=1$. We have a short exact sequence of graded $\CG$--modules $$0\rightarrow \tau^{*}_{(2\ell-3)\xi_{\ell-1}+2\xi_{\ell}}V((\xi_{\ell-1}+1)^{\ell-1},\xi_{\ell})\rightarrow\ker \varphi^+\rightarrow \tau^{*}_{4(\ell-1)\xi_{\ell}}V(\bxi(-1))\rightarrow 0.$$
\item[(iii)] Suppose that $\delta_{\xi_\ell, 1}\delta_{p,2}=1$. We have a short exact sequence
of graded $\CG$--modules
$$0\to \tau_{4(\ell-1)}^*V(\bxi(-1))\to\ker\varphi^+\to \tau_{\xi_{\ell-1}(2\ell-3)}^*V((\xi_{\ell-1}+1)^{\ell-1})\to 0.$$

\end{enumerit}
\end{prop}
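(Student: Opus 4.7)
The plan is to prove Proposition~\ref{case2} in parallel with the proof of Proposition~\ref{case1}: in each of the three cases we identify the generators of $\ker\varphi^+$ provided by Proposition~\ref{propker}, construct candidate submodules, verify they satisfy the defining relations of the claimed $V(\bxi')$'s with $\bxi'\prec\bxi$, and close by a dimension count using the lower bound \eqref{dimest} and the induction hypothesis applied to $\bxi^+$, $\bxi(-1)$, $((\xi_{\ell-1}+1)^{\ell-1},\xi_\ell)$ and $((\xi_{\ell-1}+1)^{\ell-1})$. The central vector throughout is $w:=y_{2\ell-2}^{(\xi_\ell)}v_\bxi$, of grade $4(\ell-1)\xi_\ell$; by Proposition~\ref{propker} and the running assumption $2\xi_\ell\le\xi_{\ell-1}$, it is the sole generator of $\ker\varphi^+$ in cases (i) and (ii), while in case (iii) ($\xi_\ell=1$, $p=2$) we also have $w_2:=y_{\ell-\frac{3}{2}}^{(\xi_{\ell-1})}v_\bxi$ of grade $(2\ell-3)\xi_{\ell-1}$.

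For case (i) the plan is to show that $w$ satisfies the defining relations of the generator of $\tau^*_{4(\ell-1)\xi_\ell}V(\bxi(-1))$, thereby producing a surjection onto $\ker\varphi^+$. The Demazure-type relations $y_{2\ell-2}w=0$ and $y_{\ell-\frac12}w=0$ are immediate from \eqref{cthird} and from the fact that $\phi(\bxi;\ell-1)=0$. The remaining relations at the end of $\bxi(-1)$, namely $y_{2\ell-4}^{(\xi_{\ell-1}-\xi_\ell+1)}w=0$ and $y_{\ell-\frac32}^{(2\phi(\bxi(-1);\ell-2)+1)}w=0$ (when the conditions in Definition~\ref{third} are met), will be derived by specialising \eqref{third1} and \eqref{third2} with $k=\ell-2$ and appropriate $(r,s)$: after discarding the summands of $\bold y_{\ge\ell-2}(r,s)$ and $\widetilde{\bold y}_{\ge\ell-\frac32}(r,s)$ that involve any $y_{2j}$ or $y_{j+\frac12}$ with $j\ge\ell$ (each of which kills $v_\bxi$), the combinatorial constraints on $\bs_{\ge\ell-2}$ and $\widetilde\bs_{\ge\ell-\frac32}$ (via Lemma~\ref{lamm}) force the residual sum to collapse to a single product, namely the one we want to vanish. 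The dimension identity $\dim V(\bxi)=\dim V(\bxi^+)+\dim V(\bxi(-1))$ — read off from \eqref{dimest} and the inductive descriptions of $V(\bxi^+)$ and $V(\bxi(-1))$ — then forces the surjection to be an isomorphism.

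For cases (ii) and (iii) the plan is to produce the claimed short exact sequence by choosing an intermediate submodule $V_0\subset\ker\varphi^+$. In case (ii), where $p=1$ and $2\xi_\ell<\xi_{\ell-1}$, the vector $w':=y_{\ell-\frac{3}{2}}^{(\xi-2\xi_\ell)}w$ has the correct weight and grade to generate the candidate submodule isomorphic to $\tau^*_{(2\ell-3)\xi_{\ell-1}+2\xi_\ell}V((\xi_{\ell-1}+1)^{\ell-1},\xi_\ell)$; one verifies defining relations on $w'$ by the same mechanism as in case (i), and separately checks that the image of $w$ in $\ker\varphi^+/V_0$ satisfies the relations of $V(\bxi(-1))$. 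In case (iii) one sets $V_0:=\bu w$ and uses the case (i) argument — adapted to the current partition — to show that $w$ satisfies the relations of $V(\bxi(-1))$, then verifies that the image of $w_2$ in $\ker\varphi^+/V_0$ generates a quotient of $\tau^*_{\xi_{\ell-1}(2\ell-3)}V((\xi_{\ell-1}+1)^{\ell-1})$. A dimension count using \eqref{dimest} and the induction hypothesis on each of the partitions on the right closes both cases. The principal technical obstacle is the combinatorial verification of the higher-grade relations from Proposition~\ref{genmax2} on the candidate generators; in case (ii) it also requires correctly spotting $w'$ and exploiting the non-trivial brackets $[y_{k+\frac12},y_{l+\frac12}]\propto y_{k+l+1}$ of odd-index generators of $\CG$, and in case (iii) it requires using actions of $\CH_+$ of appropriate grade to show that certain relations on $w_2$ hold only modulo $V_0$.
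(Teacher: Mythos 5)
Your plan coincides with the paper's proof in all structural respects: the paper works with exactly the generators you single out ($w=y_{2\ell-2}^{(\xi_\ell)}v_\bxi$ in all cases, $y_{\ell-\frac32}^{(\xi_{\ell-1}-2\xi_\ell)}w$ as the generator of the submodule in (ii), and $y_{\ell-\frac32}^{(\xi_{\ell-1})}v_\bxi$ in (iii)), builds the same one-step filtrations, checks the relations of Definition~\ref{third} on these vectors via \eqref{cthird}, \eqref{third1}, \eqref{third2} and Lemma~\ref{lamm} (this is precisely the content of Lemma~\ref{hilfslem}(i)--(ii) plus the in-line computation in the proof of (iii)), and closes each case by comparing the resulting upper bound with the lower bound \eqref{dimest} under the induction hypothesis. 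Your identification of the graded shifts and of the sole kernel generator in cases (i)--(ii) via Proposition~\ref{propker} is also correct.

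There is, however, one concrete step where your described mechanism is insufficient. You claim that, after discarding the summands killed by $v_\bxi$, the constraints of Lemma~\ref{lamm} always make \eqref{third1}/\eqref{third2} ``collapse to a single product''. This is true for $y_{2\ell-4}^{(\xi_{\ell-1}-\xi_\ell+1)}w=0$ and for $y_{\ell-\frac32}^{(\xi_{\ell-1}-2\xi_\ell+1)}w=0$, but it fails for the relation $y_{2\ell-4}^{(\xi_\ell+1)}\,y_{\ell-\frac32}^{(\xi_{\ell-1}-2\xi_\ell)}\,y_{2\ell-2}^{(\xi_\ell)}v_\bxi=0$, which you need in case (ii) to show that $w'$ generates a quotient of $\tau^{*}_{(2\ell-3)\xi_{\ell-1}+2\xi_{\ell}}V((\xi_{\ell-1}+1)^{\ell-1},\xi_{\ell})$. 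For that identity Lemma~\ref{lamm} only reduces $\widetilde{\bold y}_{\ge \ell-\frac52}(r,s)v_\bxi=0$ to a sum over a parameter $t$ (equation \eqref{summe3} in the paper), and no single choice of $(r,s)$ isolates the desired monomial. The paper's proof of Lemma~\ref{hilfslem}(ii) requires a further idea: one acts on \eqref{summe3} with divided powers of $\widetilde y_{2\ell-4}$, lets the auxiliary parameter $z$ run over $1,\dots,\xi_\ell+1$, and observes that the resulting linear system $Av=0$ has an invertible coefficient matrix, so each summand vanishes separately. Without this (or a substitute) your verification of the presentation of the submodule in case (ii) is incomplete. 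A minor additional discrepancy: in case (iii) the relations on $y_{\ell-\frac32}^{(\xi_{\ell-1})}v_\bxi$ modulo $V_0$ are obtained in the paper again by the \eqref{third2}/Lemma~\ref{lamm} analysis (showing the residual terms have $\tilde b_{\ell-1}>0$ and hence lie in $V_0$), not by acting with elements of $\CH_+$; that trick is only used in the proof of Proposition~\ref{propker}.
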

\subsection{}
Before we prove Proposition~\ref{case1} and Proposition~\ref{case2} we show how we can use both to prove Theorem~\ref{mainthm1}. We obtain a filtration of $\ker\varphi^+$ by graded $\CG$--submodules such that the successive quotients are of the form $V(\bxi')$, with  $\bxi'\prec\bxi$. By applying the induction hypothesis to each $V(\bxi')$ and $V(\bxi^+)$ we obtain that $V(\bxi)$ admits a level $m$ Demazure flag for all $m\geq \xi+1$. Hence the second part of Theorem~\ref{mainthm1} is proven unless we are in the case $\xi_0=\xi=m$ when we also have to prove that $V(\bxi)$ has a level $\xi_0$ Demazure flag.
But this is immediate with \eqref{specialcase} since  $V(\bxi)$ itself is a Demazure module of level $\xi_0$.\par
Recall the dimension bound from \eqref{dimest}. Now we also have an upper bound using the above filtration. A long but tedious calculation shows that these bounds coincide and hence we have equality in \eqref{dimest}. This proves the first part of the theorem together with Proposition~\ref{genmax2}. Moreover, the explicit construction of the filtration yields the following recursive formulae:

\begin{cor}\label{c1}\mbox{}
Let $\bxi\in \mathcal{P}_{\ell}$, $m\in \mathbb{N}$ such that $m\geq \xi_0$ and $D$ a level $m$ Demazure module. 
\begin{enumerate}
\item
If $2\xi_{\ell}>\xi_{\ell-1}$ we have 
\begin{align*}[V(\bxi):D]_q&=[V(\bxi^+):D]_q
+\sum_{j=0}^{k(\bxi)}q^{(2\ell-1)(2\xi_\ell-\xi_{\ell-1}-j)}[V(\bxi(j)):D]_q&\\&+\delta_{\xi_\ell,\xi_{\ell-1}}(\delta_{p,1}+\delta_{p,2})q^{4\xi_\ell(\ell-1)-(p-1)(2\ell-1)}[V((\xi_{\ell-1}+1)^{\ell-1}):D]_q.
\end{align*}
\item If $2\xi_{\ell}\leq\xi_{\ell-1}$ we have 
\begin{align*}[V(\bxi):D]_q&=[V(\bxi^+):D]_q+q^{4\xi_\ell(\ell-1)}[V(\bxi(-1)):D]_q&\\&+q^{(2\ell-3)\xi_{\ell-1}+2\xi_\ell}(1-\delta_{2\xi_\ell,\xi_{\ell-1}})\delta_{p,1}[V((\xi_{\ell-1}+1)^{\ell-1},\xi_\ell):D]_q&\\&+q^{(2\ell-3)\xi_{\ell-1}}\delta_{\xi_\ell,1}\delta_{p,2}[V((\xi_{\ell-1}+1)^{\ell-1}):D]_q.
\end{align*}

\end{enumerate}
\qed
\end{cor}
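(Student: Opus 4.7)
The plan is to read off the recursive formulae directly from the filtrations of $\ker\varphi^+$ established in Propositions \ref{case1} and \ref{case2}, combined with the short exact sequence
\[0 \to \ker\varphi^+ \to V(\bxi) \to V(\bxi^+) \to 0\]
coming from Proposition~\ref{propker}. Two ingredients are needed. First, graded multiplicities are additive on short exact sequences of $\CG$--modules all admitting level $m$ Demazure flags: given $0\to A\to B\to C\to 0$, concatenating a level $m$ flag of $A$ (viewed inside $B$) with the pullback of a level $m$ flag of $C$ yields a level $m$ flag of $B$, and the filtration of $B$ has the same multiset of sections as the two given filtrations combined, so $[B:D(m,n)]_q=[A:D(m,n)]_q+[C:D(m,n)]_q$. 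By the inductive proof of Theorem~\ref{mainthm1} carried out just above the corollary, every $V(\bxi')$ with $\bxi'\prec\bxi$ admits a level $m$ flag for $m\ge\xi_0$, so additivity applies to every module appearing in the filtrations of $\ker\varphi^+$. Second, $[\tau_p^*V:D]_q = q^p[V:D]_q$ is immediate from the definition of the graded multiplicity polynomial.

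With these two tools in place, the proof of (1) (the case $2\xi_\ell>\xi_{\ell-1}$) proceeds by summing the contributions of the successive quotients in Proposition~\ref{case1}. The subcase $\delta_{\xi_\ell,\xi_{\ell-1}}\delta_{p,2}=0$ (part~(i)) yields the main sum $\sum_{j=0}^{k(\bxi)}q^{(2\ell-1)(2\xi_\ell-\xi_{\ell-1}-j)}[V(\bxi(j)):D]_q$ together with, when $\delta_{\xi_\ell,\xi_{\ell-1}}\delta_{p,1}=1$, a further $V_{-1}$ contribution; in that degenerate case $\bxi(-1)$ has vanishing last part and is identified with $(\xi_{\ell-1}+1)^{\ell-1}$, producing the $q^{4\xi_\ell(\ell-1)}$ term. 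Proposition~\ref{case1}(ii) then supplies the remaining $\delta_{\xi_\ell,\xi_{\ell-1}}\delta_{p,2}=1$ contribution with shift $4\xi_\ell(\ell-1)-(2\ell-1)$, which combines with the $p=1$ case via the uniform exponent factor $-(p-1)(2\ell-1)$. Part (2) (the case $2\xi_\ell\le\xi_{\ell-1}$) is analogous, reading off Proposition~\ref{case2}: subcase (i) contributes only the $V(\bxi(-1))$ term; subcase (ii) adds the $V((\xi_{\ell-1}+1)^{\ell-1},\xi_\ell)$ contribution with shift $(2\ell-3)\xi_{\ell-1}+2\xi_\ell$; and subcase (iii) adds the $V((\xi_{\ell-1}+1)^{\ell-1})$ term with shift $\xi_{\ell-1}(2\ell-3)$. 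Adding $[V(\bxi^+):D]_q$ from the quotient $V(\bxi^+)$ of the short exact sequence in each case completes the formula.

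The main obstacle is purely bookkeeping: one has to verify that the various indicator functions $\delta_{\xi_\ell,\xi_{\ell-1}}$, $\delta_{p,j}$, $\delta_{2\xi_\ell,\xi_{\ell-1}}$, and $\delta_{\xi_\ell,1}$ that appear separately in Propositions \ref{case1} and \ref{case2} collapse cleanly into the compact expressions claimed in the corollary, and that the degenerate partitions with last part equal to zero (arising from $\bxi(-1)$ when $\xi_\ell=\xi_{\ell-1}$) are consistently replaced by the shorter partitions $(\xi_{\ell-1}+1)^{\ell-1}$ in the statement. No further representation-theoretic input beyond the two propositions is required.
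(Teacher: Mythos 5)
Your proposal is correct and is exactly the paper's argument: the corollary is stated there as an immediate consequence of the short exact sequence $0\to\ker\varphi^+\to V(\bxi)\to V(\bxi^+)\to 0$ from Proposition~\ref{propker} and the explicit filtrations of $\ker\varphi^+$ in Propositions~\ref{case1} and \ref{case2}, using additivity of graded multiplicities over flags and the shift rule $[\tau_p^*V:D]_q=q^p[V:D]_q$. Your bookkeeping (including identifying the degenerate $\bxi(-1)$ with $((\xi_{\ell-1}+1)^{\ell-1})$ when $\xi_\ell=\xi_{\ell-1}$, and the exponent $4\xi_\ell(\ell-1)-(p-1)(2\ell-1)$ unifying the $p=1,2$ cases) matches the paper's intended derivation.
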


\subsection{}
We need the  following technical lemma whose proof we postpone to the end of the section.
\begin{lem}\label{hilfslem}
We have the following relations in $V(\bxi)$:
\begin{enumerate}[(i)]
\item
$$y_{2\ell-4}^{(\xi_{\ell-1}-\xi_{\ell}+1)}\ y_{2\ell-2}^{(\xi_{\ell})}v_{\bxi}=0.$$

\item If  $2\xi_{\ell}\leq \xi_{\ell-1}$, we have 
$$y_{\ell-\frac{3}{2}}^{(\xi_{\ell-1}-2\xi_{\ell}+1)}\ y_{2\ell-2}^{(\xi_{\ell})}v_{\bxi}\ =\  0 \ = \ y_{2\ell-4}^{(\xi_{\ell}+1)}\
y_{\ell-\frac{3}{2}}^{(\xi_{\ell-1}-2\xi_{\ell})}\ y_{2\ell-2}^{(\xi_{\ell})}
v_{\bxi}$$
\item If $\xi_{\ell}= \xi_{\ell-1}$ we have 
$$y_{\ell-\frac{3}{2}}^{(\xi_{\ell}+1)}\ y_{\ell-\frac{1}{2}}^{(\xi_{\ell})}
v_{\bxi}\ =\  0,\quad \ y_{\ell-\frac{3}{2}}^{(\xi_{\ell})}\ y_{\ell-\frac{1}{2}}^{(\xi_{\ell})}
v_{\bxi}\in \mathbf{U}^- y_{2\ell-2}^{(\xi_{\ell})}v_{\bxi}. $$
\item  If $\xi_{\ell}=\xi_{\ell-1}=\xi_{\ell-2}$ we have 
$$\big\{y_{\ell-\frac{3}{2}}^{(\xi_{\ell}+1)}\ y_{\ell-\frac{1}{2}}^{(\xi_{\ell}-1)}
v_{\bxi},\ \   y_{2\ell-4}\ y_{\ell-\frac{3}{2}\d}^{(\xi_{\ell})}\ y_{\ell-\frac{1}{2}}^{(\xi_{\ell}-1)}
v_{\bxi}\big\}\subset \mathbf{U}^- y_{\ell-\frac{1}{2}}^{(\xi_{\ell})}v_{\bxi}.$$

\end{enumerate}
\end{lem}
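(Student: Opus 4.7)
The plan is to derive each of the four relations from Proposition~\ref{genmax2} by choosing, for each target monomial, a pair $(r,s)$ and a starting index (either $k$ in \eqref{third1} or $\tilde k$ in \eqref{third2}) such that (a) the hypothesis on $(r,s)$ is satisfied, and (b) the target monomial appears as a summand in $\bold y_{\ge k}(r,s)$ or $\widetilde{\bold y}_{\ge \tilde k}(r,s)$. The task then reduces to dealing with the other summands, either by annihilating them outright or by showing they lie inside the prescribed cyclic $\mathbf U^-$--submodule. Two systematic tools will be used throughout: the commutativity relations $[y_{2i},y_{2j}]=0$ and $[y_{2i},y_{j+\frac{1}{2}}]=0$, which hold because $-2\alpha$ and $-\frac{3}{2}\alpha$ are not roots of $\widehat R$, and the derived annihilations $y_{2j}v_{\bxi}=0=y_{j+\frac{1}{2}}v_{\bxi}$ for all $j\ge\ell$, which follow from Definition~\ref{third} combined with \eqref{cthird}.

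For (i), I would take $k=\ell-2$, $r=\xi_{\ell-1}+1$ and $s=(\ell-2)\xi_{\ell-1}+\xi_\ell+\ell-2$ in \eqref{third1}; a direct check shows the inequality is saturated. The commutativity of the $y_{2j}$'s together with the vanishing for $j\ge\ell$ kills every summand of $\bold y_{\ge \ell-2}(r,s)$ supported outside $\{\ell-2,\ell-1\}$, and the remaining linear system $b_{\ell-2}+b_{\ell-1}=r$, $(\ell-2)b_{\ell-2}+(\ell-1)b_{\ell-1}=s$ has the unique solution $(\xi_{\ell-1}-\xi_\ell+1,\xi_\ell)$. Part (ii) and the first assertion of (iii) are analogous, using \eqref{third2} at $\tilde k=\ell-\frac{3}{2}$ with parameters adapted to the target monomial, while Lemma~\ref{lamm} controls the surviving exponents. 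The decisive point in (ii) is that $2\xi_\ell\le\xi_{\ell-1}$ forces $\phi(\bxi;\ell-1)=0$, hence $y_{\ell-\frac{1}{2}}v_{\bxi}=0$ by \eqref{cthird}, which clears the half--integer contributions; in (iii) the condition $\xi_\ell=\xi_{\ell-1}$ instead gives $\phi(\bxi;\ell-1)=\xi_\ell/2$ and hence $y_{\ell-\frac{1}{2}}^{(\xi_\ell+1)}v_{\bxi}=0$, enough to pin down the asserted monomial.

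The main obstacle is the second assertion of (iii) together with all of (iv), where the unwanted summands do not vanish but must be placed inside $\mathbf U^- y_{2\ell-2}^{(\xi_\ell)}v_{\bxi}$ or $\mathbf U^- y_{\ell-\frac{1}{2}}^{(\xi_\ell)}v_{\bxi}$. A weight count shows that the weight space of the target relative to that of $v_{\bxi}$ is controlled by $\mathbf U^-$ in one direction only, so the real content of the claim is a specific linear--dependence identity inside a small weight space. My plan is to expand $\widetilde{\bold y}_{\ge \ell-\frac{3}{2}}(\xi_\ell,(2\ell-2)\xi_\ell)v_{\bxi}=0$ for (iii) and $\widetilde{\bold y}_{\ge \ell-\frac{5}{2}}(r,s)v_{\bxi}=0$ with appropriate $(r,s)$ for (iv)---the four--term identity of Lemma~\ref{lamm} is indispensable in the latter case---and then to organize the resulting sums by the exponent $a$ of $y_{2\ell-2}$, exploiting the fact that $y_{2\ell-2}$ commutes with both $y_{\ell-\frac{3}{2}}$ and $y_{\ell-\frac{1}{2}}$. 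The delicate step, which I expect to be the principal technical hurdle, is to verify that the combinatorial coefficients and signs arising from the tilde--definitions in \eqref{ydefn} cooperate so that the non--target summands collapse as required; a secondary downward induction on $a$, combined with part (i) specialized at $\xi_\ell=\xi_{\ell-1}$ (or $\xi_{\ell-2}=\xi_{\ell-1}=\xi_\ell$ in (iv)), should close the argument.
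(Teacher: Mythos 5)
Your treatment of (i) and of the first identity in (ii) coincides with the paper's: the same $(r,s)$ in \eqref{third1} for (i) (the inequality is indeed an equality), and for the first identity of (ii) the relation \eqref{third2} at $\tilde k=\ell-\tfrac32$ together with Lemma~\ref{lamm} and $y_{\ell-\frac12}v_\bxi=0$ leaves a single monomial, exactly as in \eqref{latne}. The genuine gap is the \emph{second} identity of (ii). You file it among the cases handled at $\tilde k=\ell-\tfrac32$, but no summand of $\widetilde{\bold y}_{\ge \ell-\frac32}(r,s)$ contains a factor $y_{2\ell-4}$, so no choice of parameters at that starting index can even reach the monomial $y_{2\ell-4}^{(\xi_\ell+1)}y_{\ell-\frac32}^{(\xi_{\ell-1}-2\xi_\ell)}y_{2\ell-2}^{(\xi_\ell)}v_\bxi$; and since your ``main obstacle'' paragraph only covers the second part of (iii) and part (iv), this identity is nowhere proved in your plan. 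The paper descends to $\widetilde{\bold y}_{\ge \ell-\frac52}(r,s)v_\bxi=0$ (for $\ell\ge 3$), where after Lemma~\ref{lamm} and the full family of relations \eqref{latne} one is left not with one monomial but with the sum \eqref{summe3} over $t$. The missing idea is how to isolate each summand of a relation that must equal zero (a containment in a $\mathbf U^-$--cyclic submodule is useless here): the paper lets the auxiliary parameter $z$ run over $\{1,\dots,\xi_\ell+1\}$, applies $\widetilde y_{2\ell-4}^{(\xi_\ell+1-z)}$ to each resulting relation, and observes that the ensuing linear system $Av=0$ in the vectors $v_t$ has an invertible coefficient matrix, forcing every $v_t=0$; the case $t=0$ is the claim.

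A related overstatement occurs in (iii): the bound $y_{\ell-\frac12}^{(\xi_\ell+1)}v_\bxi=0$ does \emph{not} pin down a single monomial. With the forced parameters ($2r=2\xi_\ell+1$, $s=\xi_\ell+(2\xi_\ell+1)(\ell-\tfrac32)$), Lemma~\ref{lamm} only gives $\tilde b_{\ell-\frac12}+\tilde b_{\ell-1}=\xi_\ell$ and $\tilde b_{\ell-\frac32}+\tilde b_{\ell-1}=\xi_\ell+1$, so the terms $y_{\ell-\frac32}^{(\xi_\ell+1-a)}y_{2\ell-2}^{(a)}y_{\ell-\frac12}^{(\xi_\ell-a)}v_\bxi$ survive for all $0\le a\le\xi_\ell$ and are not killed termwise by \eqref{cthird}; since the first statement of (iii) is again a vanishing statement, it needs the same separation device (or a genuine induction), which your sketch does not supply. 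For the containment statements in (iii) and (iv) your outline (organize by the power $a$ of $y_{2\ell-2}$, use commutativity, push the $a>0$ terms into the cyclic submodule) has the right shape and is consistent with what the paper leaves to the reader as ``similar''; but note that $y_{2\ell-2}^{(a)}(\cdots)v_\bxi$ with $a<\xi_\ell$ does not obviously lie in $\mathbf U^-y_{2\ell-2}^{(\xi_\ell)}v_\bxi$, so the ``delicate step'' you defer is precisely where the content lies. As written, the proposal establishes (i) and the first half of (ii), and only gestures at the rest.
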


\subsection{Proof of Proposition~\ref{case1}(i)}\label{fall11} 
Suppose that $2\xi_{\ell}>\xi_{\ell-1}$ and $\delta_{\xi_{\ell},\xi_{\ell-1}}\delta_{p,2}=0$. Set $$w_{-1}=\delta_{\xi_\ell\xi_{\ell-1}}\delta_{p,1}\  y_{\ell-\frac{3}{2}}^{(\xi_{\ell})}\ y_{\ell-\frac{1}{2}}^{(\xi_{\ell})}v_{\bxi},\qquad w_j=y_{\ell-\frac{1}{2}}^{(2\xi_{\ell}-\xi_{\ell-1}-j)}v_\bxi,\ \ 0\le j\le k(\bxi).$$ 
Let $V_j$ be the $\CG$--submodule of $V(\bxi)$ generated by  
$w_j$ for $-1\le j\le k(\bxi)$ and note that $V_{k(\bxi)}=\ker\varphi^+$ by Proposition~\ref{propker} and \eqref{defnkxi}. An easy calculation show that we have a surjective map from the appropriate level one Demazure module onto $V_j/V_{j-1}$, $0\leq j\leq k(\bxi)$. We will further show that the highest weight vector $\overline{w}_j$ in $V_j/V_{j-1}$ satisfies the defining relations of $V(\bxi(j))$. It means, we shall prove that
\begin{align}\label{finer01}&y_{2\ell}\overline{w}_j=y_{\ell+\frac{1}{2}}\overline{w}_j=y_{\ell-\frac{1}{2}}\overline{w}_j=0, &\\&
\label{finer02}
\ y_{2\ell-2}^{(\xi_{\ell-1}-\xi_{\ell}+\delta_{j,2}+1)}\overline{w}_j= \delta_{\xi_\ell,\xi_{\ell-1}}\delta_{p,1}\delta_{j,0}\ y_{\ell-\frac{3}{2}}^{(\xi_{\ell})}\overline{w}_j=0.
\end{align}
The relations in \eqref{finer01} are obviously satsified since
$[y_{2\ell}, y_{\ell-\frac{1}{2}}]=0,\ [y_{\ell+\frac{1}{2}}, y_{\ell-\frac{1}{2}}]=y_{2\ell},
$
and 
$$y_{\ell-\frac{1}{2}}^{(2\xi_{\ell}-\xi_{\ell-1}+1)}v_{\bxi}=y_{\ell+\frac{1}{2}}v_{\bxi}=y_{2\ell}v_{\bxi}=0$$
by \eqref{cthird}. Now we turn our attention to the relations in \eqref{finer02}. By construction we have 
$$\delta_{\xi_\ell,\xi_{\ell-1}}\delta_{p,1}\ y_{\ell-\frac{3}{2}}^{(\xi_{\ell})}w_0=w_{-1}=0.$$
Hence it remains to show
\begin{equation}\label{12}
y_{2\ell-2}^{(\xi_{\ell-1}-\xi_\ell+\delta_{j,2}+1)}\ w_j\in \mathbf{U} w_{j-1}.
\end{equation}
Set 
$$r=\frac{1}{2}\xi_{\ell-1}+1-\frac{\delta_{j,1}}{2}\ \mbox{ and }\ s=(\ell-\frac{1}{2})(2\xi_{\ell}-\xi_{\ell-1}-j)+2(\ell-1)(\xi_{\ell-1}-\xi_\ell+\delta_{j,2}+1).$$
Similarly as in the proof of Proposition~\ref{propker} we can show that each $\widetilde\bob\in \widetilde{\bs}_{\geq \ell-\frac{3}{2}}(r,s)$ with $\tilde b_i=0$ for $i\geq \ell$ satisfies $\tilde b_{\ell-\frac{1}{2}}\geq 2\xi_{\ell}-\xi_{\ell-1}-j$. Together with Proposition~\ref{genmax2} and
$$s\geq \frac{1}{2}+2r(\ell-2)+\phi(\bxi,\ell-2)$$
we get 
\begin{align*}
\widetilde{y}_{2\ell-2}^{(\xi_{\ell-1}-\xi_\ell+\delta_{j,2}+1)}\ & \widetilde{y}_{\ell-\frac{1}{2}}^{(2\xi_{\ell}-\xi_{\ell-1}-j)}v_{\bxi} \in\sum_{t>2\xi_{\ell}-\xi_{\ell-1}-j}\mathbf{U} \widetilde{y}_{\ell-\frac{1}{2}}^{(t)}v_{\bxi},
\end{align*}
which implies \eqref{12}. Thus we get surjective maps 
\begin{equation}\label{ss1}\tau^*_{(2\ell-1)(2\xi_\ell-\xi_{\ell-1}-j)}V(\bxi(j))\rightarrow V_j/V_{j-1},\ \ 0\leq j \leq k(\bxi).\end{equation}
Moreover, we note that  Lemma~\ref{hilfslem} (i) and (iii) imply that we also have a surjective map 
\begin{equation}\label{ss2}\delta_{\xi_\ell,\xi_{\ell-1}}\delta_{p,1}\ \tau^{*}_{4\xi_{\ell}(\ell-1)}V(\bxi(-1))\rightarrow V_{-1}.\end{equation}
The proof is complete if we show that the maps in \eqref{ss1} and \eqref{ss2} are isomorphisms. Applying our induction hypothesis, we obtain an upper bound for $\dim V(\bxi)$ by \eqref{ss1} and \eqref{ss2}. A straightforward calculation shows that the lower bound established in \eqref{dimest} coincides with this upper bound, which proves the proposition.

\subsection{Proof of Proposition~\ref{case1}(ii)}\label{fall12}
Assume that $2\xi_{\ell}>\xi_{\ell-1}$ and $\delta_{\xi_{\ell},\xi_{\ell-1}}\delta_{p,2}=1$.   Set 
$$w_0=y_{(\ell-\frac{1}{2})}^{(\xi_{\ell})}v_{\bxi},\ \  \ w_1=y_{\ell-\frac{3}{2}}^{(\xi_{\ell})}\ y_{\ell-\frac{1}{2}}^{(\xi_{\ell}-1)}v_{\bxi},\ \ w_2=(1-\delta_{\xi_\ell,1})y_{\ell-\frac{1}{2}}^{(\xi_{\ell}-1)}v_{\bxi},$$ and let $V_j$ be the submodule generated by the elements $w_i$, $0\le i\le j$. Note that $V_2=\ker\varphi^+$ by Proposition~\ref{propker} and if  $\xi_\ell=1$ we have $V_1=V_2=\ker\varphi^+$.\par
The idea is again to show that there is a cyclic generator satisfying the defining relations of $V(\bxi(j))$ and $V((\xi_{\ell-1}+1)^{\ell-1})$ respectively and to use the dimension bound given in \eqref{dimest}. The harder relations are stated in Lemma~\ref{hilfslem} and all other relations are easy to check. To be more precise, in addition to the relations proven in Lemma~\ref{hilfslem} (iii)--(iv) we need to verify 
$$y_{2\ell-2}\ y_{\ell-\frac{1}{2}}^{(\xi_{\ell}-1)}v_{\bxi}\in \mathbf{U}^- y_{\ell-\frac{1}{2}}^{(\xi_{\ell})}v_{\bxi},\quad y_{2\ell-2}\ y_{\ell-\frac{1}{2}}^{(\xi_{\ell})}v_{\bxi}=0.$$
The proof is similar and will be omitted.

\subsection{Proof of Proposition~\ref{case2}(i)}\label{fall22}
Assume that $2\xi_{\ell}\leq \xi_{\ell-1}$, $\delta_{p,2}\delta_{\xi_\ell,1}=0$ and $(1-\delta_{2\xi_{\ell},\xi_{\ell-1}})\delta_{p,1}=0.$ 
By Lemma~\ref{hilfslem} and \eqref{cthird} we know that $y_{2\ell-2}^{(\xi_{\ell})}v_{\bxi}$, which is the generator of the kernel by Proposition~\ref{propker}, satisfies the defining relations of $\tau^*_{4(\ell-1)\xi_{\ell}}V(\bxi(-1))$ given in Definition~\ref{third}. Again, a simple dimension argument using our induction hypothesis and \eqref{dimest} finishes the proof.
\subsection{Proof of Proposition~\ref{case2}(ii)}\label{fall23}We consider the case $2\xi_{\ell}\leq \xi_{\ell-1}$, $\delta_{p,2}\delta_{\xi_\ell,1}=0$ and $(1-\delta_{2\xi_{\ell},\xi_{\ell-1}})\delta_{p,1}=1$. Set
$$w_0=y_{\ell-\frac{3}{2}}^{(\xi_{\ell-1}-2\xi_{\ell})}y_{2\ell-2}^{(\xi_{\ell})}v_{\bxi},\ \  \ w_1=y_{2\ell-2}^{(\xi_{\ell})}v_{\bxi},$$ and let $V_j$ be the submodule generated by the element $w_j$, $0\leq j\leq 1.$ Note that Proposition~\ref{propker} implies $V_1=\ker \varphi^+$. From Lemma~\ref{hilfslem} and \eqref{cthird} we obtain the following surjective maps
\begin{equation}\label{zuzeigen0}\tau^{*}_{4(\ell-1)\xi_{\ell}}V(\bxi(-1))
\twoheadrightarrow V_1/V_0\end{equation}
and
\begin{equation}\label{zuzeigen}\tau^{*}_{(2\ell-3)\xi_{\ell-1}+2\xi_{\ell}}V((\xi_{\ell-1}+1)^{\ell-1},\xi_{\ell})\twoheadrightarrow V_0.\end{equation}
Again a simple dimension argument shows that the maps in \eqref{zuzeigen0} and \eqref{zuzeigen} are isomorphisms. 
\subsection{Proof of Proposition~\ref{case2}(iii)}\label{fall24} We consider the remaining case $2\xi_{\ell}\leq \xi_{\ell-1}$ and $\delta_{p,2}\delta_{\xi_\ell,1}=1$. Set
$$w_0=y_{2\ell-2}v_{\bxi},\ \  \ w_1=y_{\ell-\frac32}^{(\xi_{\ell-1})}v_{\bxi},$$ and let $V_j$ be the submodule generated by the element $w_i$, $0\leq i\leq j.$ Note that $V_1=\ker \varphi^+$.
Lemma~\ref{hilfslem} immediately implies 
$$\tau_{4(\ell-1)}^*V(\bxi(-1)) \twoheadrightarrow V_0.$$
Again by a dimension argument it remains to show that the highest weight vector in $V_1/V_0$ satisfies 
$$y_{2(\ell-2)}\ y_{\ell-\frac{3}{2}}^{(\xi_{\ell-1})}v_{\bxi}=y_{\ell-\frac{3}{2}}^{(\xi_{\ell-1}+1)}v_{\bxi}=0.$$
Let $k=(\ell-3),\ 2r=\xi_{\ell-1}+1$ and $s=(\ell-\frac{3}{2})(\xi_{\ell-1}+1)$. We obviously have $s\geq \frac{1}{2}+2r(\ell-3)+\phi(\bxi,\ell-3)$ and hence $\widetilde{\bold y}_{\geq \ell-\frac{5}{2}}(r,s)v_{\bxi}=0$. Our aim is to prove that 
\begin{equation}\label{zzu}\widetilde{\bold y}_{\geq \ell-\frac{5}{2}}(r,s)v_{\bxi}=y_{\ell-\frac{3}{2}}^{(\xi_{\ell-1}+1)}v_{\bxi}+V_0\end{equation}
or equivalently 
$$\widetilde \bob\in \widetilde{\mathbf{S}}_{\geq \ell-\frac{3}{2}}(r,s) \Longrightarrow \tilde b_{\ell-1}>0 \mbox{ or } \tilde b_{\ell-\frac{3}{2}}=\xi_{\ell-1}+1.$$
Assume $\tilde b_{\ell-1}=0$. We have with Lemma~\ref{lamm}
$$\tilde b_{\ell-\frac{3}{2}}+\tilde b_{\ell-2}=\xi_{\ell-1}+1,\quad \tilde b_{\ell-\frac{5}{2}}+\tilde b_{\ell-2}=0,$$ 
which proves \eqref{zzu}. Now we set $2r=\xi_{\ell-1}+2,\ s=(\ell-\frac{3}{2})\xi_{\ell-1}+2(\ell-2)$ and obtain with similar calculations as above that any $\widetilde \bob\in \widetilde{\mathbf{S}}_{\geq \ell-\frac{5}{2}}(r,s)$ with $\tilde b_{i}=0$ for $i\geq \ell-1$ is of the form 
$$\tilde b_{\ell-2}=1,\ \tilde b_{\ell-\frac{3}{2}}=\xi_{\ell-1},\ \tilde b_{\ell-1}=\tilde b_{\ell-\frac{5}{2}}=0\quad \mbox{or}\quad \tilde b_{\ell-2}=\tilde b_{\ell-1}=0,\ \tilde b_{\ell-\frac{3}{2}}=\xi_{\ell-1}+1,\ \tilde b_{\ell-\frac{5}{2}}=1.$$
Now \eqref{zzu} finishes the proof.

\subsection{Proof of Lemma~\ref{hilfslem}}
\textit{Proof of part (i): } Let
$r=1+\xi_{\ell-1}$, $s=\xi_{\ell}+(\ell-2)(\xi_{\ell-1}+1)$. We have 
$$r+s\geq 1+r(\ell-2)+\xi_{\ell-1}+\xi_{\ell}$$
which implies together with \eqref{third1}
$$\mathbf{y}_{\geq \ell-2}(r,s)v_{\bxi}=y_{2\ell-4}^{(\xi_{\ell-1}-\xi_{\ell}+1)}\ y_{2\ell-2}^{(\xi_{\ell})}v_{\bxi}=0.$$
\textit{Proof of part (ii): } We fix $t\in\{0,\dots,\xi_\ell\}$ and set $2r=2t+\xi_{\ell-1}+1$, $s=\xi_{\ell}+(\ell-\frac{3}{2})(\xi_{\ell-1}+1)+2(\ell-2)t$. We obtain
$$s\geq \frac{1}{2}+(2t+\xi_{\ell-1}+1)(\ell-2)+\frac{1}{2}\xi_{\ell-1}+\xi_{\ell}=\frac{1}{2}+2r(\ell-2)+\phi(\bxi,\ell-2)$$
and hence with \eqref{third2} we get
\begin{equation}\label{latne}\widetilde{\bold y}_{\ge \ell-\frac{3}{2}}(r,s)v_\bxi=\widetilde{y}_{\ell-\frac{3}{2}}^{(\xi_{\ell-1}-2\xi_{\ell}+4
t+1)}\ \widetilde{y}_{2\ell-2}^{(\xi_{\ell}-t)}v_{\bxi}=0,
\end{equation}
which proves the first part. In order to prove the second part we fix $z\in\{1,\dots,\xi_{\ell}+1\}$ and set $2r=\xi_{\ell-1}+2z$, $s=\xi_{\ell}+\xi_{\ell-1}(\ell-\frac{3}{2})+2(\ell-2)z$. If $\ell=2$ the statement is obvious; so let $\ell\geq 3$. We have
$$s\geq \frac{1}{2}+2r(\ell-3)+\phi(\bxi,\ell-3)$$
and therefore $\widetilde{\bold y}_{\geq \ell-{\frac{5}{2}}}(r,s)v_{\bxi}=0$. Let $\widetilde \bob\in \widetilde{\mathbf{S}}_{\geq \ell-{\frac{5}{2}}}(r,s)$ with $\tilde b_j=0$ for all $j\geq \ell-\frac{1}{2}$ and set $\tilde b_{\ell-1}=\xi_{\ell}-t$ for some $t\in\{0,\dots,\xi_{\ell}\}$. From Lemma~\ref{lamm} we get
\begin{equation}\label{kpp}
s-r(2\ell-5)=(\tilde b_{\ell-\frac{3}{2}}+3\tilde b_{\ell-1}+\tilde b_{\ell-2})=\xi_{\ell-1}+\xi_\ell+z\end{equation}
which implies  
$$z-\xi_\ell=2r-(\xi_{\ell-1}+\xi_\ell+z)=(\tilde b_{\ell-\frac{5}{2}}+\tilde b_{\ell-2}-\tilde b_{\ell-1}),\quad \tilde b_{\ell-\frac{5}{2}}+\tilde b_{\ell-2}=z-t.$$
Thus $\tilde b_{\ell-2}\leq z-t$ and with \eqref{kpp} we obtain
$$\tilde b_{\ell-\frac{3}{2}}\geq \xi_{\ell-1}-2\xi_{\ell}+4t.$$
Now we use \eqref{latne} and obtain that $\widetilde \bob$ must be of the form
$$\tilde b_{\ell-1}=\xi_{\ell}-t,\ \tilde b_{\ell-\frac{3}{2}}=\xi_{\ell-1}-2\xi_{\ell}+4t,\ \tilde b_{\ell-2}=z-t,\ \tilde b_{\ell-\frac{5}{2}}=0, \text{ for some $t\in\{0,\dots,\xi_\ell\}$}.$$
Therefore
\begin{equation}\label{summe3}\widetilde{\bold y}_{\geq \ell-\frac{5}{2}}(r,s)v_{\bxi}=\sum_{t=0}^{\min\{z,\xi_{\ell}\}}
\widetilde{y}_{\ell-\frac{3}{2}}^{(\xi_{\ell-1}-2\xi_{\ell}+4t)}\ \widetilde{y}_{2\ell-4}^{(z-t)}\ \widetilde{y}_{2\ell-2}^{(\xi_{\ell}-t)}v_{\bxi}=0.\end{equation}
Acting with $\widetilde{y}_{2\ell-4}^{(\xi_\ell+1-y)}$ on \eqref{summe3} yields 

\begin{equation}\label{summe2}\sum_{t=0}^{\min\{z,\xi_{\ell}\}}\frac{(\xi_{\ell}+1-t)!}{(z-t)!(\xi_{\ell}+1-z)!}\ 
\widetilde{y}_{\ell-\frac{3}{2}}^{(\xi_{\ell-1}-2\xi_{\ell}+4t)}\ \widetilde{y}_{2\ell-4}^{(\xi_{\ell}+1-t)}
\widetilde{y}_{2\ell-2}^{(\xi_{\ell}-t)}v_{\bxi}=0.\end{equation}
Our aim is to show that each summand in \eqref{summe2} vanishes. Let $v=(v_0,\dots,v_{\xi_{\ell}})$, where 
$$v_t=\widetilde{y}_{\ell-\frac{3}{2}}^{(\xi_{\ell-1}-2\xi_{\ell}+4t)}\ \widetilde{y}_{2\ell-4}^{(\xi_{\ell}+1-t)}\ \widetilde{y}_{2\ell-2}^{(\xi_{\ell}-t)}v_{\bxi},\quad 0\leq t \leq \xi_\ell.$$
By using \eqref{summe2} for $z=1,\dots,\xi_{\ell}+1$ we get a system of linear equations $Av=0$ where $A$ is invertible. Hence $v=0$. The proof of the remaining parts is similar and will be omitted.

%%%%%%%%%%%%%%%%%%%%%%%%%%%%%%%%%%%%%%%%%%%%%%%%%%%%%%%%%%%%%%%%%%%%%%%%%%%%%%%%%%%%%%%%%%%%%%%%%%%%%%%%%%%%%%%%%%%%%%%%%

%%%%%%%%%%%%%%%%%%%%%%%%%%%%%%%%%%%%%%%%%%%%%%%%%%%%%%%%%%%%%%%%%%%
%%%%%%%%%%%%%%%%%%%%%%%%%%%%%%%%%%%%%%%%%%%%%%%%%%%%%%%%%%%%%%%%%%%%%%%%%%%%%%%%%%%%%%%%%%%%%%%%%%%%%%%%%%%%%%%%%%%%%%%%%

%%%%%%%%%%%%%%%%%%%%%%%%%%%%%%%%%%%%%%%%%%%%%%%%%%%%%%%%%%%%%%%%%%%
\section{Proof of Proposition~\ref{1to2and2to3}}\label{section6}
In this section, we establish a recursive formulae for the graded multiplicities and prove Proposition~\ref{1to2and2to3}.
\subsection{}
The following lemma is the analogue of \cite[Lemma 3.8]{CSSW14} and is proven similarly using Corollary~\ref{c1}. The proof will be omitted.
\begin{lem}\label{propsec1}
Let $\bxi\in\mathcal{P}_{\ell}$, $\ell>1$, and $\overline\bxi$ be the unique partition obtained from $\bxi$ by removing $\xi_0$.
For $s\in\mathbb{Z}_+$, we have 
$$[V(\bxi):D(\xi_0,s)]_q=q^{2(|\bxi|-s)}[V(\overline\bxi): D(\xi_0, s-\xi_0)]_q.$$
\end{lem}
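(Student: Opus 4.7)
The plan is to proceed by induction on the partial order $\prec$ of \eqref{ord}, using the recursive formulae of \corref{c1} as the main tool.

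First I would invoke \remref{remh} to arrange $\xi_0 = \xi_1$ in $\bxi$, which is permitted since $V(\bxi)$ depends only on the reduced partition when $\ell>1$; the same reduction can be made for $V(\overline\bxi)$ when its first two entries disagree. With this normalization, both $V(\bxi)$ and $V(\overline\bxi)$ fall within the scope of \corref{c1}, producing parallel recursions for $[V(\bxi): D(\xi_0, s)]_q$ and $[V(\overline\bxi): D(\xi_0, s-\xi_0)]_q$ in terms of multiplicities in partitions that are strictly smaller under $\prec$.

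The key structural observation is that the bar operation commutes with the operations appearing in the recursion: directly from the explicit definitions one verifies
$$\overline{\bxi^+}=\overline\bxi^+, \qquad \overline{\bxi(j)}=\overline\bxi(j), \qquad \overline{(\xi_{\ell-1}+1)^{\ell-1}}=(\xi_{\ell-1}+1)^{\ell-2}.$$
Applying the inductive hypothesis to every term on the right-hand side of the recursion for $V(\bxi)$, each multiplicity becomes of the form $q^{2(|\btau|-s)}[V(\overline\btau): D(\xi_0, s-\xi_0)]_q$ for the appropriate smaller $\btau$. The claim then reduces to matching these $q$--powers, termwise, with $q^{2(|\bxi|-s)}$ times the corresponding $q$--powers appearing in the recursion for $V(\overline\bxi)$. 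Each matching boils down to a simple arithmetic identity, notably $|\bxi(j)| - |\bxi| = j - 2\xi_\ell + \xi_{\ell-1}$ and $|\bxi^+| = |\bxi|$, both verified by direct computation. The cancellation is structural: the factor $(2\ell-1)$ in the recursion for $V(\bxi)$ and the factor $(2\ell-3)$ in the recursion for $V(\overline\bxi)$ differ by exactly $2$, which precisely compensates for the change in $|\cdot|$ produced by removing $\xi_0$. The base case $\ell = 2$ is handled directly, since $V(\overline\bxi) \cong D(\xi_1, \xi_2)$ by \eqref{specialcase} and both sides can then be computed using \corref{c1} on the left and the explicit formula \eqref{anfan} on the right.

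I expect the main obstacle to be the combinatorial bookkeeping across every branch of \corref{c1}, particularly the degenerate subcases $\xi_\ell = \xi_{\ell-1}$ with $p \in \{1, 2\}$ (or $\xi_\ell = 1$ with $p = 2$ in Case~2), which contribute an additional all-equal partition term $V((\xi_{\ell-1}+1)^{\ell-1})$ with the intricate prefactor $q^{4\xi_\ell(\ell-1) - (p-1)(2\ell-1)}$; matching this with the corresponding $V((\xi_{\ell-1}+1)^{\ell-2})$ term on the $V(\overline\bxi)$-side requires the simultaneous use of $\xi_{\ell-1} = \xi$ and the precise expression for $|\bxi|$.
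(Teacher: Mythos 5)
Your overall strategy -- induction along $\prec$ using the recursions of \corref{c1}, after normalizing the (irrelevant) first entry via \remref{remh} -- is exactly the route the paper indicates (it omits the proof, citing \cite[Lemma 3.8]{CSSW14} and \corref{c1}), and your generic bookkeeping is right: I checked $|\bxi^+|=|\bxi|$, $|\bxi(j)|-|\bxi|=j-2\xi_\ell+\xi_{\ell-1}$, and that the shift from $2\ell-1$ to $2\ell-3$ (and from $4\xi_\ell(\ell-1)$ to $4\xi_\ell(\ell-2)$, etc.) is exactly compensated, so in the case $\xi_1=\xi+1$ the termwise matching goes through.

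However, your ``key structural observation'' is false precisely in the case $\xi_1=\cdots=\xi_{\ell-1}$, which after your normalization $\xi_0:=\xi_1$ is the all-equal case and is exactly the case needed for the application in the paper (there $\overline\bxi=(m^{s_1},s_0)$). The operations $\btau\mapsto\btau^+$ and $\btau\mapsto\btau(j)$ are defined through the parametrization $((\xi+1)^{\ell-p},\xi^p,\xi_\ell)$, and removing the first entry changes $p$ (from $\ell$ to $\ell-1$, or from $\ell-1$ to $\ell-2$ if you instead pad with $\xi+1$): e.g.\ for $\bxi=(\xi^{\ell},\xi_\ell)$ one gets $\overline{\bxi^+}=(\xi^{\ell-1},\xi_\ell-1)$ while $\overline\bxi{}^+=(\xi+1,\xi^{\ell-2},\xi_\ell-1)$. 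The two partitions differ only in their first entry, so for $\ell\geq 3$ the multiplicities and the $|\cdot|$-values still agree by \remref{remh}; but the branch structure of \corref{c1} does not correspond termwise, since $\delta_{p,1}+\delta_{p,2}$ and the extra $V((\xi_{\ell-1}+1)^{\ell-1})$-terms see the shifted $p$ (try $\bxi=(\xi^4)$, flag level $>\xi$: one side of your matching has the extra term, the other does not). So you must either choose the dummy first entry to be $\xi_1+1$ on both $\bxi$ and $\overline\bxi$ whenever the flag level exceeds $\xi_1$ (the remaining case, flag level $=\xi_1=\cdots=\xi_{\ell-1}$, is immediate from \eqref{specialcase} and \eqref{dies1}), or argue separately in the all-equal case; as written the termwise identification fails there. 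Two smaller points of the same kind: the inductive hypothesis cannot be applied to $(\xi_{\ell-1}+1)^{\ell-1}$ when $\ell=3$ (it lies in $\mathcal P_1$, outside the lemma's scope, so use \eqref{anfan} directly), and you should note that the flag level dominates the first entry of every partition fed to the inductive hypothesis (e.g.\ $\xi+1$ for $\bxi^+$), which holds in your setting but is what makes the induction legitimate.
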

\qed
\subsection{}
Now we are able to give a recursive formulae for the graded multiplicities.
\begin{prop}\label{hilfslem1}Let $m,n\in\mathbb{N}$.
\begin{enumerate}
\item	We write $s=m s_1+s_0$, where $0 < s_0\leq m$ and set $\bxi=((m+1),m^{s_1},s_0)$. \\

\begin{enumerate}
\item[(i)] 
If $2s_0\leq m$ and $s_1 \geq 1$, then
\begin{align*}
\hspace{1cm}[D(m,s):D(m+1,n)]_q&=q^{2(s-n)}[D(m,s-m-1):D(m+1,n-m-1)]_q&\\&+q^{4s_1s_0}[D(m,s-2s_0):D(m+1,n)]_q.
\end{align*}
\item[(ii)] 
If $2s_0 > m$ and $s_1 \geq 1$, then 
\begin{align*}
\hspace{2,1cm}[D(m,s):D(&m+1,n)]_q=q^{2(s-n)}[D(m,s-m-1):D(m+1,n-m-1)]_q&\\&+q^{(2s_1+1)(2s_0-m)}[D(m,s+m-2s_0):D(m+1,n)]_q&\\&+\sum_{j=1}^{k(\bxi)}q^{2s_1(2s_0-j)+j+m-2n}[D(m,s-2s_0+j-1):D(m+1,n-m-1)]_q.
\end{align*}
\end{enumerate}
\item For $j \in \mathbb{Z}_{+}$, $0 \leq n, k \leq m$ and $m j+k \geq n$, 
\begin{equation*}
[D(m,m j+ k):D(m +1,n)]_q=
\begin{cases}
q^{j(m j + 2k)} &\text{ if } n=k, \\
q^{(j + 1)(m j+ 2k - m)} &\text{ if } n=m-k, \\
0 &\text{ otherwise. }
\end{cases}
\end{equation*}
\end{enumerate}

\begin{proof}
Using the recursive formulae given in Corollary~\ref{c1} and Lemma~\ref{propsec1} we get in part (i)
\begin{align*}
[D(m,s):D(m+1,n)]_q&=q^{2(s-n)}[D(m,s-m-1):D(m+1,n-m-1)]_q&\\&+q^{4s_1s_0}[D(m+\delta_{s_1,1},s-2s_0):D(m+1,n)]_q&\\&+\delta_{n,m+1}\delta_{s_0,1}\delta_{s_1,2}q^{3m}+(1-\delta_{2s_0,m})\delta_{s_0,n}\delta_{s_1,1}q^{m+2s_0}.
\end{align*}
The statement follows from the following identities, which are easy consequences of \eqref{anfan}:
$$[D(m,s-2s_0):D(m+1,n)]_q=[D(m+\delta_{s_1,1},s-2s_0):D(m+1,n)]_q+(1-\delta_{2s_0,m})\delta_{s_0,n}\delta_{s_1,1}q^{m-2n}$$
and
\begin{align*}[D(m,s-m-1):D(m+1,n-m-1)]_q&=[D(m+\delta_{s_1,2}\delta_{s_0,1},s-m-1):D(m+1,n-m-1)]_q&\\&+\delta_{n,m+1}\delta_{s_0,1}\delta_{s_1,2}q^{m}.\end{align*}
The proof of part (ii) follows exactly the the same ideas and is left to the reader.
Now we prove part (2) of the proposition. Note that the case $j=0$ follows from \eqref{anfan}. So by induction we can assume that  
	\begin{equation*}
[D(m,m j'+ k):D(m +1,n)]_q=
\begin{cases}
q^{j'(m j' + 2k)} &\text{ if } n=k, \\
q^{(j' + 1)(m j'+ 2k - m)} &\text{ if } n=m-k, \\
0 &\text{ otherwise. }
\end{cases}
\end{equation*}
holds for all $0\le j'<j$ , $0 \leq n, k \leq m$ such that $mj'+k\geq n$. 
If $k=0$, we can assume that $j>1$, since $j=1$ follows once more with \eqref{anfan}. 
%since $k=0$ and $j=1$ is already covered by $j=0,\ k=m$. 
Using the recursive formulae from part (1) and the induction hypothesis we get
	\begin{align*}
		[D(m, m j):D(m+1,n)]_q &= q^{m(2j-1)}[D(m, (j-1)m):D(m+1,n)]_q &\\&=
		\begin{cases}
q^{j(m j)} &\text{ if } n=0, \\
q^{(j + 1)(m j - m)} &\text{ if } n=m, \\
0 &\text{ otherwise. }
\end{cases}
\end{align*}
If $0 < k \leq \frac{m}{2}$, then \begin{align*}
     	[D(m,m j+k):D(m+1,n)]_q&=q^{4kj}[D(m,m j -k):D(m+1,n)]_q&\\&
     	=\begin{cases}
q^{j(m j + 2k)} &\text{ if } n=k, \\
q^{(j + 1)(m j+ 2k - m)} &\text{ if } n=m-k, \\
0 &\text{ otherwise. }
\end{cases}
\end{align*}
If $\frac{m}{2} < k \leq m$, then 
\begin{align*}
	[D(m,m j+k):D(m+1,n)]_q&=q^{(2j+1)(2k-m)}[D(m,mj+m-k):D(m+1,n)]_q&\\&
	=q^{(2j+1)(2k-m)}q^{4(m-k)j}[D(m,m(j-1)+k):D(m+1,n)]_q&\\&
	=\begin{cases}
q^{j(m j + 2k)} &\text{ if } n=k, \\
q^{(j + 1)(m j+ 2k - m)} &\text{ if } n=m-k, \\
0 &\text{ otherwise. }
\end{cases}
\end{align*}
\end{proof}
\end{prop}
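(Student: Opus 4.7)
The plan is to realize the level $m$ Demazure module $D(m,s)$ as a module of the form $V(\bxi)$ for a carefully chosen partition and apply the recursive formulae in Corollary~\ref{c1}. For $s = m s_1 + s_0$ with $s_1 \geq 1$ and $0 < s_0 \leq m$, I would take $\bxi = ((m+1), m^{s_1}, s_0)$; here $\ell = s_1 + 1 > 1$, so by Remark~\ref{remh} the module $V(\bxi)$ does not depend on $\xi_0$, and by \eqref{specialcase} we have $V(\bxi) \cong V((m^{s_1+1}, s_0)) \cong D(m, s)$. The reason for inflating $\xi_0$ to $m+1$ (rather than keeping it equal to $m$) is precisely so that the level of $V(\bxi)$ matches the level of the target module $D(m+1,n)$, allowing Lemma~\ref{propsec1} to be applied to the $V(\bxi^+)$ contribution.

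For part (1), I would split according to Corollary~\ref{c1}: case (2) when $2\xi_\ell = 2s_0 \leq m = \xi_{\ell-1}$, and case (1) when $2s_0 > m$. In each case, the corollary expands $[V(\bxi):D(m+1,n)]_q$ as a sum involving $V(\bxi^+)$, the various $V(\bxi(j))$, and possibly the edge terms $V((m+1)^{s_1}, s_0)$ and $V((m+1)^{s_1})$. For the $V(\bxi^+)$ term, since $\bxi^+ = ((m+1)^2, m^{s_1-1}, s_0-1)$ still has $\xi_0^+ = m+1$, Lemma~\ref{propsec1} gives $q^{2(s-n)}[V(\overline{\bxi^+}):D(m+1, n-m-1)]_q$, and then \eqref{specialcase} identifies $V(\overline{\bxi^+}) \cong D(m, s-m-1)$. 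The partitions $\bxi(j)$ in our setting collapse under \eqref{specialcase} to level $m$ Demazure modules with appropriately shifted argument (e.g.\ $V(\bxi(-1)) \cong D(m, s-2s_0)$). The edge terms $V((m+1)^{s_1}, s_0)$ and $V((m+1)^{s_1})$ are themselves level $m+1$ Demazure modules, so their graded multiplicity in a level $(m+1)$ flag is either zero or a single monomial computable from \eqref{anfan}. Finally, one uses the two $\delta$-identities indicated by the authors to absorb the $(1-\delta_{2s_0,m})\delta_{s_1,1}$ and $\delta_{s_0,1}\delta_{s_1,2}$ corrections into the main terms, producing the clean recursions (i) and (ii).

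For part (2), I would use induction on $j$. The base case $j = 0$ requires $[D(m,k):D(m+1,n)]_q$ for $0 \leq n,k \leq m$, which is immediate from the base-step argument in Section~\ref{inistep} and \eqref{anfan}: this equals $1$ when $n=k$, equals $q^{2k-m}$ when $n = m-k$, and vanishes otherwise. For the inductive step, one applies part (1) to $s = mj + k$, splitting into the three subcases $k = 0$, $0 < k \leq m/2$, and $m/2 < k \leq m$. When $k = 0$ the recursion from part (1) collapses to $q^{m(2j-1)}[D(m,(j-1)m):D(m+1,n)]_q$; when $0 < k \leq m/2$ it is $q^{4kj}[D(m, mj - k):D(m+1,n)]_q$; and when $m/2 < k \leq m$ it is $q^{(2j+1)(2k-m)}[D(m, mj + m - k):D(m+1,n)]_q$ (the second recursion then being applied once more to reduce to $j - 1$). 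In each case the inductive hypothesis plugs in the stated closed form, and the powers of $q$ add up correctly.

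The main obstacle is not conceptual but bookkeeping: verifying that all the $\delta$-term corrections produced by Corollary~\ref{c1} combine correctly with the $\delta$-term corrections produced by the two auxiliary \eqref{anfan} identities, so that the final statement has only the two (respectively three-plus-sum) terms shown. A secondary care-point is checking that the partition-reduction via Lemma~\ref{propsec1} together with \eqref{specialcase} actually lands in a valid $D(m, \cdot)$ with nonnegative second argument, which requires tracking the sign of $s - m - 1$, $s - 2s_0$, $s + m - 2s_0$ and $s - 2s_0 + j - 1$ using \eqref{dies0} to dispose of any vacuous cases.
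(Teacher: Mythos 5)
Your proposal is correct and follows essentially the same route as the paper: part (1) is obtained by applying Corollary~\ref{c1} to $V(\bxi)$ with $\bxi=((m+1),m^{s_1},s_0)$, reducing the $V(\bxi^+)$ term via Lemma~\ref{propsec1} and \eqref{specialcase}, and absorbing the degenerate (level $m+1$) contributions through the two \eqref{anfan}-identities; part (2) is the same induction on $j$ with the three subcases $k=0$, $0<k\le m/2$, $m/2<k\le m$. The only cosmetic imprecision is the rationale for $\xi_0=m+1$ (the first part of $\bxi^+$ is $m+1$ automatically, independent of $\xi_0$), but this does not affect the argument.
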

\subsection{Proof of Proposition~\ref{1to2and2to3}(i)} If $s\in\{0,1\}$ the statement of the proposition follows from Proposition~\ref{hilfslem1}(2). Also the case $p=0$ is clear. Using the recursive formulae in Proposition~\ref{hilfslem1}, an easy induction argument shows
\begin{align*}[D(1,s+p):D(2,s)]_q&=q^{2p}[D(1,(s-2)+p):D(2,s-2)]_q+q^{2(s+p)-1}[D(1,s+(p-1)):D(2,s))]_q&\\&
=q^{p(s+p+\text{res}_{2}(s))}\qbinom{\lfloor \frac{s-2}{2} \rfloor+p}{p}_{q^2}+q^{2s+p}q^{(p-1)(s+p+\text{res}_{2}(s))}\qbinom{\lfloor \frac{s}{2} \rfloor+p-1}{p-1}_{q^2}&\\&
=q^{p(s+p+\text{res}_{2}(s))}\left(\qbinom{\lfloor \frac{s-2}{2} \rfloor+p}{p}_{q^2}+q^{s-\text{res}_{2}(s)}\qbinom{\lfloor \frac{s}{2} \rfloor+p-1}{p-1}_{q^2}\right)&\\&
=q^{p(s+p+\text{res}_{2}(s))}\qbinom{\lfloor \frac{s}{2} \rfloor+p}{p}_{q^2},\end{align*}
which finishes the proof.
\subsection{Proof of Proposition~\ref{1to2and2to3}(ii)} Recall the following recursive formulae from Proposition~\ref{hilfslem1}:
\begin{align*}
[D(2,2n):D(3,s)]_q&=q^{2(2n-s)}[D(2,2n-3):D(3,s-3)]_q+q^{2(2n-1)}[D(2,2n-2):D(3,s)]_q&\\&+q^{6n-2s-3}[D(2,2n-4):D(3,s-3)]_q
,\quad \text{ if $n\geq 2$,}
\end{align*}
\begin{align*}
[D(2,2n+1):D(3,s)]_q&=q^{4n+2-2s}[D(2,2n-2):D(3,s-3)]_q&\\&+q^{4n}[D(2,2n-1):D(3,s)]_q,\quad \text{ if $n\geq 1$.}
\end{align*}
Since all cases follow the same idea, we will only prove 
\begin{align}\label{firstf1}&\notag \left[ D(2,3s+r+2p) : D(3,3s+r)\right]_q =&\\& \hspace{2cm} q^{2(p^2 +p(2s+r))} \sum_{j=0}^{p}
q^{2j(j+\delta_{r,1})} \qbinom{s+p-j}{s}_{q^2}\qbinom{\lfloor\frac{s}{2}\rfloor+j-\delta_{r,1}\delta_{\text{res}_2(s),0}}{2j}_{q^2},\end{align}
where $r\in\{0,1,2\}$ and $s\in\bz_+$. The proof proceeds by induction on $s$. If $s=0$, the statement follows from Proposition~\ref{hilfslem1}(2) and the induction begins. The strategy of the proof is to show that the $q$--binomial formulae in \eqref{firstf1} satisfies the above recursive formulae. Let us define 
$$u'=\delta_{r,1}\text{res}_2(s),\quad y'=(\delta_{r,0}+\delta_{r,2})\delta_{\text{res}_2(s),0}.$$
\textit{Case 1:} In this case we suppose that $3s+r$ is even. By the induction hypothesis, it will be enough to show that the $q$--binomial formula in \eqref{firstf1} satisfies the recursive
	  \begin{align*}
          [D(2,3s+r+2p):D(3,3s+r)]_q&=q^{4p}[D(2,3(s-1)+r+2p):D(3,3(s-1)+r)]_q&\\&+q^{2(3s+r+2p-1)}[D(2,3s+r+2(p-1)):D(3,3s+r)]_q&\\&+q^{3s+r+6p-3}[D(2,3(s-1)+r+2(p-1)+1):D(3,3(s-1)+r)]_q.\end{align*}
Equivalently, it suffices to show that the following sum vanishes for all $p \geq 0$:
\begin{align*}\sum_{j=0}^p q^{2j(j+\delta_{r,1})}&\qbinom{s+p-j-1}{s-1}_{q^2}\qbinom{\lfloor \frac{s-1}{2}\rfloor+j-u'}{2j}_{q^2}&\\&
+\sum_{j=0}^{p}q^{2s+2j(j+\delta_{r,1})}\qbinom{s+p-j-1}{s}_{q^2}\qbinom{\lfloor\frac{s}{2}\rfloor+j}{2j}_{q^2}&\\&+\sum_{j=0}^{p-1}q^{2j(j+\delta_{r,1})+2j+s+\delta_{r,1}}\qbinom{s+p-j-2}{s-1}_{q^2}\qbinom{\lfloor\frac{s-1}{2}\rfloor+j+y'}{2j+1}_{q^2}&\\&-\sum_{j=0}^{p}q^{2j(j+\delta_{r,1})}\qbinom{s+p-j}{s}_{q^2}\qbinom{\lfloor\frac{s}{2}\rfloor+j}{2j}_{q^2}.\end{align*}
Note that $y'=1$ implies $u'=0$ and $s$ is even and $y'=0$ forces $u'=1$ and $s$ is odd. It follows
$$\qbinom{\lfloor \frac{s-1}{2} \rfloor+j-u'}{2j}_{q^2}+q^{s-2j-\delta_{r,1}}\qbinom{\lfloor \frac{s-1}{2}\rfloor+j+y'-1}{2j-1}_{q^2}=\qbinom{\lfloor\frac{s}{2}\rfloor+j}{2j}_{q^2},\quad 1 \leq j \leq p.$$
Now rewriting the third term in the above sum as
$$\sum_{j=1}^{p}q^{2j(j+\delta_{r,1})+s-2j-\delta_{r,1}}\qbinom{s+p-j-1}{s-1}_{q^2}\qbinom{\lfloor\frac{s-1}{2}\rfloor+j+y'-1}{2j-1}_{q^2}$$
we obtain the desired result.\\
\textit{Case 2:} We assume that $3s+r$ is odd. Again, it will be enough to show that the $q$--binomial formula in \eqref{firstf1} satisfies the recursive 
\begin{align*}[D(2,3s+r+2p):D(3,3s+r)]_q&=q^{4p}[D(2,3(s-1)+r+2p):D(3,3(s-1)+r)]_q&\\&+q^{6s+2r+4p-2}[D(2,3s+r+2p-2):D(3,3s+r)]_q,\end{align*} which is equivalent to the statement that the following sum vanishes for all $p \geq 0$:
\begin{align*}\sum_{j=0}^{p}q^{2j(j+\delta_{r,1})}\Bigg(&\qbinom{s+p-j-1}{s-1}_{q^2}\qbinom{\lfloor\frac{s-1}{2}\rfloor+j-u'}{2j}_{q^2} +q^{2s}\qbinom{s+p-j-1}{s}_{q^2}\qbinom{\lfloor\frac{s}{2}\rfloor+j-\delta_{r,1}\delta_{\text{res}_2(s),0}}{2j}_{q^2}&\\&-\qbinom{s+p-j}{s}_{q^2}\qbinom{\lfloor\frac{s}{2}\rfloor+j-\delta_{r,1}\delta_{\text{res}_2(s),0}}{2j}_{q^2}\Bigg).\end{align*}
Since $\lfloor \frac{s-1}{2}\rfloor-u'=\lfloor \frac{s}{2}\rfloor -\delta_{r,1}\delta_{\text{res}_2(s),0}$ it is easy to see that the above sum is zero. Hence Proposition~\ref{1to2and2to3}(ii) is proven.
%%%%%%%%%%%%%%%%%%%%%%%%%%%%%%%%%%%%%%%%%%%%%%%%%%%%%%%%%%%%%%%%%%%%%%%%%%%%%%%%%%%
%%%%%%%%%%%%%%%%%%%%%%%%%%%%%%%%%%%%%%%%%%%%%%%%%%%%%%%%%%%%%%%%%%%%%%%%%%%%%%%%%%%

%%%%%%%%%%%%%%%%%%%%%%%%%%%%%%%%%%%%%%%%%%%%%%%%%%%%%%%%%%%%%%%%%%%%%%%%%%%%%%%%%%%%%%%%%%%%%%%%%%%%%%%%%%%%%%%%%%%%%%%%%
%%%%%%%%%%%%%%%%%%%%%%%%%%%%%%%%%%%%%%%%%%%%%%%%%%%%%%%%%%%%%%%%%%%%%%%%%
%%%%%%%%%%%%%%%%%%%%%%%%%%%%%%%%%%%%%%%%%%%%%%%%%%%%%%%%%%%%%%%%%%%%%%%%%%%%%%%%%%%%%%%%%%%%%%%%%%%%%%%%%%%%%%%%%%%%%%%%%
\section{Proof of Theorem~\ref{thmgenser1}}\label{section7}
In this section, we study the generating series $A_n^{m'\rightarrow m}(x)$ for the numerical multiplicities and give a closed formula when $m'=1$ and $m=m'+1$, showing that they are rational functions. We will use freely the notation and results
established in the previous sections.
\subsection{}The following result gives a recursive formulae . It will be convinient to set $A_{-1}^{1\rightarrow m}(x)=1$.
\begin{prop}\label{genserrec}
For $m\geq 2$ and $n\geq -1$ write $n+1=m n_1+n_0$ , where $0<n_0 \leq m,\ n_1 \geq -1$. The generating series $A_n^{1 \rightarrow m}(x)$ satisfies the reccurence,
\begin{equation*}
 A_n^{1 \rightarrow m}(x)=
\begin{cases}
A_{n+1}^{1 \rightarrow m}(x)  & \text{ if } 2n_0=m-1,\\
A_{n+1}^{1 \rightarrow m}(x)(1-(1-\delta_{m,2})x)  & \text{ if } n_0 = m-1 \text{ or } 2n_0 = m-2,\\
A_{n+1}^{1 \rightarrow m}(x)-(1-\delta_{m,2})x^2 A_{n+2}^{1 \rightarrow m}(x) & \text{ if } 2n_0 = m,\\
(1-x)A_{n+1}^{1 \rightarrow m}(x)-(1-\delta_{m,2})x^2 A_{n+2}^{1 \rightarrow m}(x) & \text{ if } 2n_0 \notin \{ m-2,m-1,m \} \text{ and } n_0 \neq m-1.
\end{cases}
\end{equation*}
\end{prop}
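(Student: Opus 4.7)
My plan is to prove Proposition~\ref{genserrec} by combining the transitivity of Demazure flags from Theorem~\ref{mainsecthm1} with the recursive formulae of Proposition~\ref{hilfslem1}, all specialized at $q=1$. The starting point is the transitivity identity
\[
[D(1,n+p):D(m,n)]_{q=1}\ =\ \sum_{t\ge n}[D(1,n+p):D(m-1,t)]_{q=1}\,[D(m-1,t):D(m,n)]_{q=1}.
\]
Evaluating the inner factor via Proposition~\ref{hilfslem1}(2) (with $m$ replaced by $m-1$) shows that, at least for $0 \le n \le m-1$, it equals $0$ or $1$ and the non-zero $t$'s form a union of two arithmetic progressions with common difference $m-1$ and starting residues $n$ and $m-1-n$. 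The same decomposition applies to the adjacent multiplicities for $D(m,n+1)$ and $D(m,n+2)$.

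Given this, the proof reduces to a four-way case analysis on $n_0$, comparing the sets of indices $t$ contributing to the three multiplicities $[D(1,n+p):D(m,n)]$, $[D(1,n+p):D(m,n+1)]$ and $[D(1,n+p):D(m,n+2)]$. The four cases in the statement correspond to the qualitatively distinct overlap patterns of these progressions: when $2n_0=m-1$, the contributing residue sets for $n$ and $n+1$ coincide after absorbing the shift $s\mapsto s+1$, yielding $A_n^{1\to m}(x)=A_{n+1}^{1\to m}(x)$; when $n_0=m-1$ or $2n_0=m-2$, one residue drops between $n$ and $n+1$, producing the factor $(1-x)$ with $(1-\delta_{m,2})$ encoding the degeneration at $m=2$; when $2n_0=m$, two residues merge for $n+1$ and the resulting overcount is offset by $-x^2A_{n+2}^{1\to m}(x)$; in the remaining generic case both effects combine to give $(1-x)A_{n+1}^{1\to m}(x)-x^2A_{n+2}^{1\to m}(x)$. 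Assembling the coefficient of $x^p$ in each case then matches the claimed recurrence.

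The main obstacle is the case $n\ge m$, where Proposition~\ref{hilfslem1}(2) no longer applies directly and the multiplicities $[D(m-1,t):D(m,n)]_{q=1}$ may exceed $1$ (as is visible in Proposition~\ref{1to2and2to3}). To handle this I would iterate Proposition~\ref{hilfslem1}(1) at $q=1$ to reduce the inner index $n$ modulo $m$ back to the base range, tracking how this shift interacts with the outer variable; the bookkeeping is intricate but periodic in $n$. A secondary subtlety is the case $m=2$, where the intermediate level $m-1=1$ is trivial; there the proof reduces to a direct application of Proposition~\ref{hilfslem1}(1) with $m\to1$ at $q=1$, yielding $A_n^{1\to 2}(x)=(1-x)A_{n+2}^{1\to 2}(x)$, which is consistent with the statement after the $(1-\delta_{m,2})$ factors eliminate the $A_{n+2}$ contribution. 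If the iterative bookkeeping for $n\ge m$ proves unwieldy, a fallback would be to apply Corollary~\ref{c1} directly to $\bxi=(1^{n+p+1})$ and to iterate on the resulting non-Demazure intermediate modules $V(\bxi^+)$ and $V(\bxi(0))$, but this introduces the added complication that these intermediates are not of the form $D(m',\cdot)$.
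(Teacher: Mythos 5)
Your route (flag transitivity through the intermediate level $m-1$, plus the two--progression formula of Proposition~\ref{hilfslem1}(2)) is genuinely different from the paper's proof, which never passes through level $m-1$: the paper multiplies $\ch_{\lie g}D(1,s)$ by $\ch_{\lie g}D(1,1)$, uses the Pieri--type identity of Lemma~\ref{cruciallemma} (a pure $\lie g$--character computation based on the explicit $\lie g$--decomposition of $D(m,p)$), and equates coefficients in the expansion along level--$m$ Demazure characters. Unfortunately your plan has a structural gap already in the base range $0\le n\le m-1$. Extracting the coefficient of $x^p$ in the asserted recurrence gives, in the generic case,
\[
[D(1,n+p):D(m,n)]_{q=1}+[D(1,n+p):D(m,n+1)]_{q=1}+[D(1,n+p):D(m,n+2)]_{q=1}=[D(1,n+p+1):D(m,n+1)]_{q=1},
\]
which mixes the source $D(1,n+p)$ with the source $D(1,n+p+1)$. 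Comparing the sets of indices $t$ contributing to the three \emph{same--source} multiplicities $[D(1,n+p):D(m,n+i)]$, $i=0,1,2$, can never produce the cross--source term: to relate $[D(1,n+p+1):D(m-1,t)]$ to the $[D(1,n+p):D(m-1,t')]$ you need exactly the proposition (equivalently Lemma~\ref{cruciallemma}) one level down, i.e.\ an induction on $m$ that you never set up, together with a genuine base case. Your $m=2$ treatment only yields $A_n^{1\to2}(x)=(1-x)A_{n+2}^{1\to2}(x)$, which is strictly weaker than the assertion ($A_n^{1\to2}=A_{n+1}^{1\to2}$ for $n$ even and $A_n^{1\to2}=(1-x)A_{n+1}^{1\to2}$ for $n$ odd) and does not imply it.

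The second, and more serious, gap is the case $n\ge m$, which you defer as ``intricate but periodic'' bookkeeping. It is neither routine nor periodic: the recurrence must be proved for all $n$, and for $n\ge m$ the inner factors $[D(m-1,t):D(m,n)]_{q=1}$ are no longer $0/1$ indicators of two arithmetic progressions --- by Proposition~\ref{elltheorem} and Theorem~\ref{closedform} the series $A_n^{m-1\to m}(x)$ has denominator $(1-x^{m-1})^{\lfloor n/m\rfloor+1}$, so these multiplicities grow without bound in $n$ (this is already visible in Proposition~\ref{1to2and2to3}(ii)). Iterating Proposition~\ref{hilfslem1}(1) to control them is essentially the entire content of the separate $m\to m+1$ analysis in Section~\ref{section7}, carried out there by a delicate double induction; so the step you postpone is the missing core of the argument, not a technicality. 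This is precisely what the paper's proof is designed to avoid: Lemma~\ref{cruciallemma} holds uniformly in $p$, and equating coefficients of the linearly independent characters $\ch_{\lie g}D(m,p)$ in $\ch_{\lie g}D(1,s+1)=\ch_{\lie g}D(1,s)\,\ch_{\lie g}D(1,1)$ delivers the four--term relation for every $n$ at once, with the four congruence cases falling out of the delta factors in the lemma. If you want to salvage your approach, you would have to organize it as an induction on $m$ with the $1\to2$ case proved directly from Proposition~\ref{1to2and2to3}(i), and replace the progression argument for $n\ge m$ by the full strength of the $m-1\to m$ recursions; at that point it is no simpler than the paper's character argument.
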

The proof of the above proposition is postponed to the end of this section. We first discuss how this can be used to prove Theorem~\ref{thmgenser1}.
\subsection{Proof of Theorem \ref{thmgenser1}}  
Set $D_m(x)=a_m(x) a_{m+1}(x)$ and $F_k=A_k^{1 \rightarrow m}(x)$ for $k \geq -1$. The theorem follows if we prove that for all $k \geq 0$ and $0 \leq r <m$, we have 
		$$(a) F_{mk+r}=N_{m,r}(x)F_{mk+m-1},  \ \ \ \ (b) F_{mk+m-1}=\frac{1}{D_m(x)^{\lfloor\frac{k}{m}\rfloor +1}},$$
where
\begin{equation*}
N_{m,r}(x) =
\begin{cases}
a_{2m-2r-1}(x)  & \text{ if } \lfloor \frac{m}{2} \rfloor \leq r \leq m-1,\\
a_m(x) a_{m-2r-1}(x)  & \text{ if } 0 \leq r \leq \lfloor \frac{m}{2} \rfloor -1.
\end{cases}
\end{equation*}
We first prove (a). If $r=m-1$ this is immediate since $N_{m,m-1}(x)=1$ and if $r =m-2$ this follows from the second case of Proposition~\ref{genserrec}. Assume now that we have proved the equality for all $0 \leq r'<m$ with $r'> r$ and $r<m-2$. To prove the equality for $r$, we use Proposition~\ref{genserrec} and the following equalities which can be easily checked 
\begin{itemize}
\item $N_{m,\lfloor \frac{m}{2} \rfloor -1}(x)=N_{m, \lfloor \frac{m}{2} \rfloor}(x)$, if m is odd,
\item $N_{m,m-2}(x)=(1-(1-\delta_{m,2})x)N_{m,m-1}(x)$,
\item $N_{m,\frac{m}{2}-2}(x)=(1-x)N_{m,\frac{m}{2}-1}(x)$ if $m\geq 4$ is even,
\item $N_{m,\frac{m}{2}-1}(x)=N_{m,\frac{m}{2}}(x)-(1-\delta_{m,2})x^2 N_{m,\frac{m}{2}+1}(x)$ if m is even,
\item $N_{m,p-1}(x)=(1-x)N_{m,p}(x)-x^2N_{m,p+1}(x)$ if $2p \notin \{m-2,m-1,m\} $ and $1\leq p \leq m-2$,
\item $N_{m,m-1}(x)D_m(x)=(1-x)N_{m,0}(x)-(1-\delta_{m,2})x^2N_{m,1}(x)$.
\end{itemize}
Now part (a) follows by an easy induction argument. In order to prove (b), observe that the last case of Proposition~\ref{genserrec} gives $$F_{m(k-1)+m-1}=(1-x)F_{m(k-1)+m}-(1-\delta_{m,2}x^2)x^2 F_{m(k-1)+m+1}=D_m(x)F_{mk+m-1}, k\geq 0.$$ Since $F_{-1}=1$ we get $D_m(x)^{k+1}F_{mk+m-1}=1$ and the proof is complete. 
\subsection{}
The following lemma is needed in the proof of Proposition~\ref{genserrec}.
\begin{lem}\label{cruciallemma}
 Let $m\in\mathbb{N}$ and $p\in\mathbb{Z}_+$. Write $p=mp_1+p_0$ with $p_1,p_0 \in \mathbb{Z}$, $p_1 \geq -1$ and 
 $0 <p_0\leq m$. Then
\begin{align*} \ch_{\lie g} D(m,p)\ \ch_{\lie g} D(1,1)=\ch_{\lie g} D(m,p+1)&+(1-\delta_{m,1})\Big((1-\delta_{2p_0,m}-\delta_{2p_0,m+1}-\delta_{p_0,m})\ch_{\lie g} D(m,p-1)&\\&+(1-\delta_{2p_0,m}-\delta_{2p_0,m-1})\ch_{\lie g} D(m,p)\Big).\end{align*}
 \begin{proof}If $m=1$, the statement is obvious since $D(1,p)\otimes D(1,1)\cong_{\mathfrak{g}} D(1,p+1)$; also if $p=0$ there is nothing to prove. So assume from now on $m\geq 2$ and $p\neq 0$. \\
{\em{Case 1}}: Let $p_0 < m$. Then $D(m,p)\cong_{\mathfrak{g}} D(m,mp_1)\otimes D(m,p_0)$ and hence
\begin{align*}
 D(m,p) \otimes D(1,1)&\cong_{\mathfrak{g}} D(m,mp_1)\otimes D(m,p_0)\otimes D(1,1)&\\&
   \cong_{\mathfrak{g}} D(m,mp_1)\otimes D(m,p_0) \otimes \big(D(m,1)\oplus V(0)\big)&\\&\cong_{\mathfrak{g}} D(m,mp_1)\otimes \big(D(m,p_0)\otimes D(m,1)\oplus D(m,p_0)\big).
  \end{align*}
So the lemma in this case follows if we prove
\begin{align}\label{ddffrr}\ch_{\lie g}D(m,p_0)& \ \ch_{\lie g} D(m,1)=\ch_{\lie g}D(m,p_0+1)&\\&\notag+(1-\delta_{2p_0,m}-\delta_{2p_0,m+1})\ch_{\lie g} D(m,p_0-1)-(\delta_{2p_0,m}+\delta_{2p_0,m-1})\ch_{\lie g} D(m,p_0).\end{align}
We have the following decomposition into irreducible $\mathfrak{g}$--modules:
\begin{align*}D(m,p_0)\otimes D(m,1)&\cong_{\mathfrak{g}}
 \begin{cases}
  \big(V(p_0)\oplus V(p_0-1)\oplus \cdots \oplus V(m-p_0)\big)\otimes V(1),& \text{ if } 2p_0 > m,\\
  V(p_0)\otimes V(1),& \text{ if } 2p_0 \leq m,
	\end{cases}
	&\\& \cong_{\mathfrak{g}}
	 \begin{cases}
  \bigoplus^{p_0+1}_{j=m-p_0+1} V(j)\oplus \bigoplus^{p_0-1}_{j=m-p_0-1} V(j),& \text{ if } 2p_0 > m,\\
  V(p_0+1)\oplus V(p_0-1),& \text{ if } 2p_0 \leq m.
 \end{cases}
\end{align*}
Further we know
\begin{align*}
D(m,p_0+1)\cong_{\mathfrak{g}}
 \begin{cases}
  V(p_0+1)\oplus V(p_0)\oplus \cdots \oplus V(m-p_0-1),& \text{ if } 2p_0+2 > m,\\
  V(p_0+1),& \text{ if } 2p_0+2 \leq m.
 \end{cases}
\end{align*}
\begin{align*}D(m,p_0-1)\cong_{\mathfrak{g}}
 \begin{cases}
  V(p_0-1)\oplus V(p_0)\oplus \cdots \oplus V(m-p_0+1),& \text{ if } 2p_0+2 > m,\\
  V(p_0-1), &\text{ if } 2p_0+2 \leq m.
 \end{cases}
\end{align*}
Now writing both sides of \eqref{ddffrr} in terms of characters of irreducible $\lie g$--modules and using the above formulae we get \eqref{ddffrr}.\\
{\em{Case 2}}: Let $p_0=m$. We get  
$$D(m,p)\otimes D(1,1)\cong_{\mathfrak{g}} D(m,m(p_1+1))\otimes \big(D(m,1)\oplus V(0)\big)\cong_{\mathfrak{g}} D(m,p+1)\oplus D(m,p).$$
\end{proof}
\end{lem}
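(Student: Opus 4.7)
The proof is a $\lie g$-module decomposition argument driven by Theorem~\ref{mainthm1}. The key input is that for $p = mp_1 + p_0$ with $0 < p_0 \leq m$, the theorem yields the $\lie g$-isomorphism
$$D(m, p) \cong_{\lie g} D(m, m p_1) \otimes D(m, p_0),$$
together with the decomposition $D(1, 1) \cong_{\lie g} V(1) \oplus V(0)$ obtained from the formulae in \S\ref{remdim} (applied to $\xi_0 = \xi_1 = 1$). For $m \ge 2$ one moreover has $V(1) \cong D(m,1)$, so $D(1,1) \cong_{\lie g} D(m,1) \oplus V(0)$, and the whole lemma becomes a computation inside the Grothendieck ring of finite-dimensional $\lie g$-modules.

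I would first dispose of the boundary cases. For $m = 1$, the lemma reads $\ch D(1, p)\,\ch D(1, 1) = \ch D(1, p+1)$, which follows immediately by iterating $D(1, n) \cong_{\lie g} D(1, 1)^{\otimes n}$. For $p = 0$ and $m \ge 2$, $D(m,0)$ is trivial and the identity reduces to $\ch D(1,1) = \ch D(m,1) + \ch D(m,0)$, exactly the decomposition above. For $m \ge 2$ and $p \ge 1$ with $p_0 = m$, the tensor decomposition gives $D(m,p)\cong_{\lie g} D(m,m(p_1+1))$ and, applying Theorem~\ref{mainthm1} again, $D(m,m(p_1+1))\otimes D(m,1) \cong_{\lie g} D(m,p+1)$; tensoring with $D(1,1)\cong D(m,1)\oplus V(0)$ yields $D(m,p)\otimes D(1,1)\cong_{\lie g} D(m,p+1)\oplus D(m,p)$, matching the right-hand side (since $p_0 = m$ forces $\delta_{p_0,m}=1$, making $A=0$, while $B=1$).

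The substantive case is $m \ge 2$ and $1 \le p_0 < m$. Factoring out $D(m, m p_1)$ and using $D(1,1) = D(m,1)\oplus V(0)$ reduces the lemma to the sub-identity
$$\ch D(m, p_0)\,\ch D(m, 1) = \ch D(m, p_0+1) + \bigl(1 - \delta_{2p_0,m} - \delta_{2p_0,m+1}\bigr)\ch D(m, p_0-1) - \bigl(\delta_{2p_0,m} + \delta_{2p_0,m-1}\bigr)\ch D(m, p_0).$$
This I would verify by expanding both sides into $\lie g$-irreducibles via the explicit decomposition recalled in \S\ref{remdim} (namely $D(m,k)\cong V(k)$ when $2k\le m$ and $D(m,k)\cong \bigoplus_{j=m-k}^{k} V(j)$ when $2k>m$), combined with Clebsch--Gordan $V(j)\otimes V(1) \cong V(j+1)\oplus V(j-1)$ (with the convention $V(-1)=0$).

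The main obstacle is the case analysis needed to verify this sub-identity: five regimes for $p_0$ relative to $m$, namely $2p_0 < m-1$, $2p_0 \in\{m-1, m, m+1\}$, and $2p_0 > m+1$. In the two extreme regimes all Kronecker deltas vanish, and a straightforward telescoping of the Clebsch--Gordan expansion of $\bigoplus_j V(j) \otimes V(1)$ matches $\ch D(m,p_0+1) + \ch D(m,p_0-1)$ term by term. In the three borderline regimes, certain $V(j)$ summands on the right-hand side would either be repeated or would fall outside the allowed range $[m-n,n]$ defining $D(m,n)$; the delta-correction terms are precisely what is needed to absorb the discrepancy. Careful bookkeeping in each of the three borderline cases completes the verification.
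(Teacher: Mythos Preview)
Your proposal is correct and follows essentially the same route as the paper: reduce to $m\ge 2$, $p\ge 1$, use the tensor factorisation $D(m,p)\cong_{\lie g} D(m,mp_1)\otimes D(m,p_0)$ and $D(1,1)\cong_{\lie g} D(m,1)\oplus V(0)$ to strip off the $D(m,mp_1)$ factor, and then verify the resulting identity for $\ch D(m,p_0)\,\ch D(m,1)$ by expanding both sides into irreducibles via the decomposition of $D(m,k)$ recorded in \S\ref{remdim} together with Clebsch--Gordan. Your five-regime case split for $p_0$ is exactly the bookkeeping the paper summarises in one sentence.
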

\subsection{Proof of Proposition~\ref{genserrec}}We continue assuming $p=mp_1+p_0$ with $p_1,p_0 \in \mathbb{Z}$, $p_1 \geq -1$ and $0 <p_0\leq m$. For $s\in\mathbb{Z}_+$, we write $$\ch_{\lie g} D(1,s)=\sum_{p \geq 0}[D(1,s):D(m,p)]_{q=1}\ch_{\lie g} D(m,p)$$
and multiply both sides with $\ch_{\lie g} D(1,1)$. We get
\begin{equation}\label{hhggvv}\ch_{\lie g} D(1,s)\ \ch_{\lie g} D(1,1)=\ch_{\lie g} D(1,s+1)=\sum_{p \geq 0}[D(1,s):D(m,p)]_{q=1}\ch_{\lie g} D(m,p)\ \ch_{\lie g}D(1,1).\end{equation}
Now we can apply Lemma~\ref{cruciallemma} to both sides of \eqref{hhggvv}. Applying to the right hand side gives a linear combination of characters of level $m$ Demazure modules. Writing the left side as
$$\ch_{\lie g} D(1,s+1)=\sum_{p \geq 0}[D(1,s+1):D(m,p)]_{q=1}\ch_{\lie g} D(m,p)$$
and equating the coefficients on both sides of \eqref{hhggvv} gives
\begin{align*}
[D(1,s+1):D(m,p)]_{q=1}&=[D(1,s):D(m,p-1)]_{q=1}+&\\&+(1-\delta_{2\widetilde{p_0},m}-\delta_{2\widetilde{p_0}.m+1}-\delta_{\widetilde{p_0},m})[D(1,s):D(m,p+1)]_{q=1}&\\&+(1-\delta_{2p_0,m}-\delta_{2p_0,m-1})[D(1,s):D(m,p)]_{q=1}
\end{align*}
where
\begin{equation*}
 \widetilde{p_0}=
 \begin{cases}
  p_0+1, &\text{ if } p_0 < m,\\
  1, &\text{ otherwise.}
 \end{cases}
\end{equation*}
Let us consider the case $2p_0=m$. We get from above 
$$
[D(1,s+1):D(m,p)]_{q=1}=[D(1,s):D(m,p-1)]_{q=1}+(1-\delta_{m,2})[D(1,s):D(m,p+1)]_{q=1}
$$
and thus
\begin{align*}A_{p-1}^{1\rightarrow m}(x)&=1+\sum_{k>0}[D(1,p-1+k):D(m,p-1)]x^k&\\&=1+\sum_{k>0}[D(1,p+k):D(m,p)]x^k-(1-\delta_{m,2})\sum_{k>0}[D(1,p-1+k):D(m,p+1)]x^k&\\&= A_p^{1\rightarrow m}(x)-(1-\delta_{m,2})x^2A_{p+1}^{1\rightarrow m}(x).
\end{align*}
Repeating the same argument for all remaining cases gives the statement of Proposition~\ref{genserrec}. We omit the details.

\subsection{}
Finally, we turn our attention to the study of $A_n^{m \rightarrow m+1}(x)$ for $m \ge 1$.
\begin{prop}\label{elltheorem}
For $m \geq 1$ and  $n \geq 0$, write $n = (m+1)p_n- r_n$  where $p_n\in\mathbb{Z}_+$ and $0 \leq r_n \leq m$. Then,
\begin{equation} \label{eq:lev_l_recc}
A_{n}^{m \rightarrow m+1}(x)=
\begin{cases}
A_{n+m}^{m \rightarrow m+1}(x) - x^{2 r_n}A_{n+2r_n}^{m \rightarrow m+1}(x) &\text{  if  } 1 \leq r_n \leq \lfloor \frac{m-1}{2} \rfloor,\\
\\
A_{n+m}^{m \rightarrow m+1}(x)- x^{2 r_n-m}A_{n+2r_n}^{m \rightarrow m+1}(x) &\text{  if  } r_n = \lfloor \frac{m+1}{2} \rfloor \text{ and } m \text{ is odd,}\\
\\
A_{n+m}^{m \rightarrow m+1}(x)- x^{2 r_n}A_{n+2r_n}^{m \rightarrow m+1}(x) &\text{  if  } r_n = \lfloor \frac{m+1}{2} \rfloor \text{ and } m \text{ is even,}\\
\\
A_{n+m}^{m \rightarrow m+1}(x) - x^{2 r_n-m}A_{n+2r_n}^{m \rightarrow m+1}(x)-x^{2r_n-m-1}A_{n+2r_n-m-1}(x) &\text{  if  } \lfloor \frac{m+3}{2} \rfloor \leq r_n \leq  m, \\
\\
A_{n+m}^{m \rightarrow m+1}(x)   &\text{ if } m+1 \mid n. 
\end{cases}
\end{equation}
\end{prop}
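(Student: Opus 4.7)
The plan is to mimic the proof of Proposition~\ref{genserrec}: derive a character identity and equate coefficients against an expansion of $\ch_{\mathfrak{g}} D(m,s)$ into level-$(m+1)$ Demazure characters. Applying Theorem~\ref{mainthm1} together with \eqref{specialcase} to the partition $\bxi=(m^{s_1+2},s_0)$ (where $s=ms_1+s_0$, $0<s_0\le m$, $s_1\ge 0$) yields the $\mathfrak{g}$-module isomorphism
\[
D(m,s+m)\;\cong_{\mathfrak{g}}\;D(m,m)\otimes D(m,s),
\]
so that $\ch_{\mathfrak{g}} D(m,s+m) = \ch_{\mathfrak{g}} D(m,m)\cdot \ch_{\mathfrak{g}} D(m,s)$. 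This plays the role of the trivial identity $\ch_{\mathfrak{g}} D(1,s+1)=\ch_{\mathfrak{g}} D(1,s)\cdot\ch_{\mathfrak{g}} D(1,1)$ used in Proposition~\ref{genserrec}, with the shift now being by $m$ rather than by $1$.

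The heart of the proof is a level-$(m+1)$ analog of Lemma~\ref{cruciallemma}: an explicit expansion of $\ch_{\mathfrak{g}} D(m+1,p)\cdot\ch_{\mathfrak{g}} D(m,m)$ as a $\mathbb{Z}$-linear combination of level-$(m+1)$ Demazure characters. Using the $\mathfrak{g}$-decompositions recalled in Section~\ref{section4}, in particular $D(m,m)\cong V(0)\oplus V(1)\oplus \cdots \oplus V(m)$ as a $\mathfrak{g}$-module together with the corresponding expansion of $D(m+1,p)$, Clebsch--Gordan produces a sum of irreducibles $V(k)$ which must then be regrouped into characters $\ch_{\mathfrak{g}} D(m+1,p')$ for nearby $p'$. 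The regrouping depends on the residue of $p'$ modulo $m+1$, which is precisely the statistic $r_n$ appearing in the proposition, and it produces the five cases of the claimed recursion.

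With the character identity in hand, inserting $\ch_{\mathfrak{g}} D(m,s)=\sum_{p\ge 0}[D(m,s):D(m+1,p)]_{q=1}\,\ch_{\mathfrak{g}} D(m+1,p)$ into both sides of $\ch_{\mathfrak{g}} D(m,s+m)=\ch_{\mathfrak{g}} D(m,m)\cdot \ch_{\mathfrak{g}} D(m,s)$ and equating coefficients of each $\ch_{\mathfrak{g}} D(m+1,p')$ yields a recursion relating $[D(m,s+m):D(m+1,p')]_{q=1}$ to $[D(m,s):D(m+1,p)]_{q=1}$ for appropriate nearby $p$. Substituting $s=n+p$ and summing over $p\ge 0$ with weight $x^p$ converts this into the claimed recursion on $A_n^{m \rightarrow m+1}(x)$, the shift $n\mapsto n+m$ coming directly from $s\mapsto s+m$. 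The main obstacle is the character identity itself: unlike the factor $\ch_{\mathfrak{g}} D(1,1)$ used in Lemma~\ref{cruciallemma}, the factor $\ch_{\mathfrak{g}} D(m,m)$ is a sum of $m+1$ irreducibles, so every product $V(k)\otimes V(j)$, $0\le j\le m$, contributes overlapping $V$'s whose multiplicities must be summed carefully. The boundary behaviour at $2p=m+1$, where the $\mathfrak{g}$-support of $D(m+1,p)$ changes, is exactly what forces the distinct correction terms $x^{2r_n}A_{n+2r_n}^{m\rightarrow m+1}$, $x^{2r_n-m}A_{n+2r_n}^{m\rightarrow m+1}$, and $x^{2r_n-m-1}A_{n+2r_n-m-1}^{m\rightarrow m+1}$ across the five cases.
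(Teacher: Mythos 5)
Your opening step is sound: applying Theorem~\ref{mainthm1} and \eqref{specialcase} to $\bxi=(m^{s_1+2},s_0)$ does give $\ch_{\mathfrak{g}}D(m,s+m)=\ch_{\mathfrak{g}}D(m,m)\cdot\ch_{\mathfrak{g}}D(m,s)$, and since the characters $\ch_{\mathfrak{g}}D(m+1,p)$ form a basis (they are unitriangular against the irreducible characters), equating coefficients against an expansion of $\ch_{\mathfrak{g}}D(m+1,p)\cdot\ch_{\mathfrak{g}}D(m,m)$ would indeed yield relations among the numerical multiplicities. The genuine gap is exactly at the point you yourself call the heart of the proof: that expansion is never stated, let alone proved. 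Unlike Lemma~\ref{cruciallemma}, where the multiplier is the single irreducible $V(1)$ and the answer has at most three terms, here the multiplier $D(m,m)\cong V(0)\oplus\cdots\oplus V(m)$ contributes $m+1$ overlapping Clebsch--Gordan pieces, and the coefficients of the regrouped expansion are precisely what encode the five cases of \eqref{eq:lev_l_recc}; asserting that the regrouping ``produces the five cases'' is a restatement of the proposition, not an argument. There is also a structural mismatch you do not address: coefficient comparison in $\ch_{\mathfrak{g}}D(m,s+m)=\ch_{\mathfrak{g}}D(m,m)\,\ch_{\mathfrak{g}}D(m,s)$ expresses each $[D(m,s+m):D(m+1,p')]_{q=1}$ purely in terms of the multiplicities $[D(m,s):D(m+1,p)]_{q=1}$, whereas in the second and fourth cases of \eqref{eq:lev_l_recc} the term $x^{2r_n-m}A_{n+2r_n}^{m\rightarrow m+1}(x)$ corresponds to $[D(m,s+m):D(m+1,n+2r_n)]_{q=1}$ (with $s=n+p$), i.e.\ a multiplicity at the same level $s+m$ as the left-hand side. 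So even granted the expansion, reaching the stated form requires an extra rearrangement of a triangular system in the second argument, a step your sketch omits.

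For comparison, the paper avoids any new character identity: it rewrites \eqref{eq:lev_l_recc} as the equivalent family of identities \eqref{eq: nmultassert} among the numbers $[D(m,s):D(m+1,n)]_{q=1}$ and proves these by a double induction on $n$ and $s$, using only the $q=1$ specialization of Proposition~\ref{hilfslem1}: the closed formula \eqref{eq: facttwo} for $0\le n\le m$ and the level $m\to m+1$ recursions of part (1). If you wish to keep your tensor-product route, you would have to compute the matrix of multiplication by $\ch_{\mathfrak{g}}D(m,m)$ in the basis $\{\ch_{\mathfrak{g}}D(m+1,p)\}$ case by case on $p$ modulo $m+1$ (and for small $p$ separately), and then perform the rearrangement above; as it stands, the proposal is a plan whose decisive lemma is missing.
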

\begin{proof}
To simplify notation, we fix $m \geq 1$ and for $s,n \in \mathbb{Z}_+$, set
$$\nu(s,n)=[D(m,s):D(m+1,n)]_{q=1}.$$
Recall that $\nu(s,n)=0$ if $s<n$. The theorem follows if we prove for all $s,n\geq 0$ that
\begin{equation}\label{eq: nmultassert}
\nu(s,n)=\begin{cases}
\nu(s+m,n+m)-\nu(s,n+2r_n) &\text{ if } 1 \leq r_n \leq \lfloor \frac{m-1}{2}\rfloor,\\
\nu(s+m,n+m)-\nu(s+m,n+2r_n) &\text{ if } r_n=\lfloor\frac{m+1}{2}\rfloor \text{ and } m \text{ is odd},\\
\nu(s+m,n+m)-\nu(s,n+2r_n) &\text{ if } r_n=\lfloor\frac{m+1}{2}\rfloor \text{ and } m \text{ is even},\\
\nu(s+m,n+m)-\nu(s+m,n+2r_n)-\nu(s,n+2r_n-m-1) &\text{ if } \lfloor \frac{m+3}{2}\rfloor\leq r_n\leq m,\\
\nu(s+m,n+m) &\text{ if } m+1\mid n.
\end{cases}
\end{equation}
Notice that this equality holds whenever $s<n$, since both sides are zero. Hence we can assume that $s \geq n$. Taking $q=1$ in Proposition~\ref{hilfslem1} gives 
\begin{equation}\label{eq: facttwo}
s \ge 0, \; 0\le n\le m\implies
\nu(s,n) = \begin{cases} 1 & \text{if } s+n \text{ or } s-n \text{ is a multiple of } m,\\
0 & \text{ otherwise}.
\end{cases}
\end{equation}
Applying once more Proposition~\ref{hilfslem1} with $s=m s_1+s_0$, $0<s_0\leq m$, $s_1\geq 1$ and $\bxi=((m+1),m^{s_1},s_0)$ gives 
\begin{align*}
\nu(s,n)=\nu(s-m-1,n-m-1)&+\nu(s+m-2s_0,n)&\\&+\sum_{j=1}^{k(\bxi)}\nu(s-2s_0+j-1,n-m-1),\quad \text{if $2s_0>m$}
\end{align*}
and
\begin{gather*}
\nu(s,n)=\nu(s-m-1,n-m-1)+\nu(s-2s_0,n),\quad \text{if $2s_0\leq m$}.
\end{gather*}
We now proceed to prove equation \eqref{eq: nmultassert} by induction on $n$. Equation \eqref{eq: facttwo} obviously implies $\nu(s,0)=\nu(s+m,m)$ and hence induction begins. Now let $n>0$. Assume that $\nu(s,n')$ satisfies \eqref{eq: nmultassert} for all $0\le n'<n$ and for all $s\in\mathbb{Z}_+$. We proceed by induction on $s$ to prove that $\nu(s,n)$  satisfies \eqref{eq: nmultassert} for all $s\in\mathbb{Z}_+$. Notice that the $s=0$ is clear since both sides of  \eqref{eq: nmultassert} are zero. Further, as remarked earlier, we assume for the rest of the proof that $s \ge n$. \\
{\em Case 1:} Suppose $0<n\le m$ and $1 \leq r_n \leq \lfloor \frac{m-1}{2}\rfloor$. We have to prove that
$$\nu(s,n)= \nu(s+m,n+m)-\nu(s,2m+2-n).$$

{\em Case 1(a):} Suppose $s \geq m+1$ and $2s_0>m$. Then the recursive can be used for both terms of the right hand side and we get
$$
\nu(s+m,n+m)=\nu(s-1,n-1)+\nu(s+2m-2s_0,n+m)+\sum_{j=1}^{k(\bxi)}\nu(s+m-2s_0+j-1,n-1),
$$
\begin{align*}
\nu(s,2m+2-n)=\nu(s-m-1,m+1-n)&+\nu(s+m-2s_0,2m+2-n)&\\&+\sum_{j=1}^{k(\bxi)}\nu(s-2s_0+j-1,m+1-n).
\end{align*}
Set
$$T_1 = \nu(s-1,n-1) - \nu(s-m-1,m+1-n),$$  
$$T_2 =  \nu(s+2m-2s_0,n+m) - \nu(s+m-2s_0,2m+2-n),$$ and $$T_3=\nu(s+m-2s_0+j-1,n-1)-\nu(s-2s_0+j-1,m+1-n).$$

Equation \eqref{eq: facttwo} applies to both terms in $T_1$ and $T_3$. Since
\begin{align*}
(s-1) - (n-1) &= (s-m-1) + (m+1-n) \\
(s-1) + (n-1) &\equiv (s-m-1) - (m+1-n) \pmod{m},
\end{align*}
\begin{align*}
(s+m-2s_0+j-1) - (n-1) &= (s-2s_0+j-1) + (m+1-n) \\
(s+m-2s_0+j-1) + (n-1) &\equiv (s-2s_0+j-1) - (m+1-n) \pmod{m},
\end{align*}
we deduce that $T_1 =T_3=0$. Further, since $s-2s_0 < s$, the inductive hypothesis gives $T_2 = \nu(s+m-2s_0,n)$.  Hence we have to verify  $\nu(s,n) = \nu(s+m-2s_0,n)$. Since $s \equiv s_0 \pmod{m}$, we obtain $s +m- 2s_0 \equiv -s \pmod{m}$ and thus $s \pm n \equiv -(s+m-2s_0 \mp n) \pmod{m}$. Applying \eqref{eq: facttwo} completes the proof.\\
{\em Case 1(b):}Suppose that $s \geq m+1$ and $2s_0 \leq m$. Then the recursive formulae can be used for both terms on the right hand side and we get 
$$\nu(s+m,n+m)=\nu(s-1,n-1)+\nu(s-2s_0+m,n+m),$$
$$\nu(s,2m+2-n)=\nu(s-m-1,m+1-n)+\nu(s-2s_0,2m+2-n).$$ Since $T_1=0$ and $s-2s_0 < s$, by using induction on $s$, we get $$\nu(s-2s_0+m,n+m)-\nu(s-2s_0,2m+2-n)=\nu(s,n)$$ which proves the claim.\\
{\em Case 1(c):} Suppose $s \leq m$. Since $2m + 2 -n > m$, we have
$\nu(s,2m+2-n) =0$. Thus we need to show that
$\nu(s,n)= \nu(s+m,n+m)$. Applying the recursive formulae again we get
$$\nu(s+m,n+m)= \nu(s-1,n-1)+\nu(s+2m-2s_0,n+m).$$
But since $0 <s \leq m$, we have $s =s_0$, and hence $s+2m-2s_0=2m-s_0  < n+m$. Thus the second term vanishes. It remains to show $\nu(s-1,n-1) = \nu(s,n)$. But this is clear by \eqref{eq: facttwo}, which implies for $1 \leq s,n \leq m$: $\nu(s-1,n-1) = \nu(s,n) = \delta_{s,n}$.\\
{\em Case 2:} Suppose $n \ge m+1$ and $1 \leq r_n \leq \lfloor \frac{m-1}{2}\rfloor$. Then $m + 1 \nmid r$. Consider
$$S = \nu(s+m,n+m) - \nu(s,n+2r_n) - \nu(s,n).$$ 
{\em Case2(a):}
Assume that $2s_0 > m$. By applying the recursive to each of the terms of $S$,
we have
\begin{align*}S &=\nu(s-1,n-1)+\nu(s+2m-2s_0,n+m)+\sum_{j=1}^{k(\bxi)}\nu(s+m-2s_0+j-1,n-1)\\&-\nu(s-m-1,n+2r_n-m-1)-\nu(s+m-2s_0,n+2r_n)-\sum_{j=1}^{k(\bxi)}\nu(s-2s_0+j-1,n+2r_n-m-1)\\ &-\nu(s-m-1,n-m-1)-\nu(s+m-2s_0,n)-\sum_{j=1}^{k(\bxi)}\nu(s-2s_0+j-1,n-m-1).\end{align*}
Since $n-m-1<n$, $s+m-2s_0<s$ and $s-2s_0+j-1<s$, the inductive hypothesis gives
\begin{align*}
\nu(s-m-1,n-m-1)&=\nu(s-1,n-1)-\nu(s-m-1,n+2r_n-m-1),\\
\nu(s+m-2s_0,n)&=\nu(s+2m-2s_0,n+m)-\nu(s+m-2s_0,n+2r_n),\\
\nu(s-2s_0+j-1,n-m-1)&=\nu(s+m-2s_0+j-1,n-1)-\nu(s-2s_0+j-1,n+2r_n-m-1).
\end{align*}
Substituting these equations in our equation for $S$, we obtain $S=0$.\\
{\em Case2(b):} Let us assume that $2s_0 \leq m$. By applying the recursive to each of the terms of $S$,
\begin{align*}
S=\nu(s-1,n-1)+\nu(s+m-2s_0,n+m)-\nu(s-m-1,n+2r_n-m-1)\\-\nu(s-2s_0,n+2r_n)-\nu(s-m-1,n-m-1)-\nu(s-2s_0,n).
\end{align*}
Again by using the induction hypothesis we get
\begin{align*}
\nu(s-m-1,n-m-1)=\nu(s-1,n-1)-\nu(s-m-1,n+2r_n-m-1),\\
\nu(s-2s_0,n)=\nu(s+m-2s_0,n+m)-\nu(s-2s_0,n+2r_n).
\end{align*}
Substituting these equations in our equation for $S$, we obtain $S=0$.
 The remaining cases can be proven similarly and we leave the details to the reader. 
\end{proof}

\subsection{} We shall use Proposition~\ref{elltheorem} to establish a closed formula for the generating series $A_{n}^{m \rightarrow m+1}(x)$. For this we define polynomials $d_n(x)$, $n\ge 0$  with non--negative integer coefficients as follows. For $0 \leq n \leq m$ set \begin{equation*}
 d_n(x)=\begin{cases}
  1 &\text{ if } n=0,m, \\
  1+x^{m-2n} &\text{ if } 1<n\leq \lceil\frac{m-2}{2}\rceil, \\
  1+\delta_{res_2(m),1}x &\text{ if } n=\lceil\frac{m-1}{2}\rceil, \\
  1+x^{2m-2n} &\text{ if } \lceil\frac{m}{2}\rceil \leq n \leq m-1.
  \end{cases}
\end{equation*}

The polynomials $d_n(x)$ for $n>m$ are defined by requiring that the following equality holds for all $p\ge 1$: \begin{equation}\label{definitionofd} \begin{bmatrix} d_{(m+1)p}(x) & d_{(m+1)p+1}(x) & \cdots &
  d_{(m+1)p+m}(x)\end{bmatrix}^{T}= K^{p} \begin{bmatrix} d_0(x) & d_1(x) & \cdots &
  d_m(x) \end{bmatrix}^T,\end{equation}
where $K$ is a $(m+1)\times(m+1)$ matrix defined as:
$$K=
\begin{cases} 
  K_1+K_{2,0} &\text{    if    } m \text{ is even }\\
  K_1+K_{2,1} &\text{    if    } m \text{ is odd  }
 \end{cases}
$$
 and the three $(m+1)\times(m+1)$ matrices $K_1, K_{2,0} \text{ and } K_{2,1}$ are defined below. Set
$$K_1=
\begin{bmatrix}
0  & 1     &       &          &          &           &           &              \\        
  & 0     & 1     &          &          &           &           &              \\        
   &       &\ddots & \ddots   &          &           &           &              \\       
   &       &       & 0        & 1        &           &           &              \\      
   &       &       &          &x^{m-1}&1          &           &              \\      
  
   &       &       &          &          &\ddots     &\ddots     &              \\   
   &       &       &          &          &           &  x^{m-1}         &1              \\  
   &       &       &          &          &           &           & x^{m-1}        
\end{bmatrix}
$$
where number of zeros on the diagonal of the matrix $K_1$ is $\lceil \frac{m+1}{2}\rceil$ and the number of entries equal to $x^{m-1}$ is $\lfloor \frac{m+1}{2}\rfloor$. Set
$$K_{2,0}= 
\begin{bmatrix}
 &     &         &        &          &          &          &          &          &            &x^{m-1}  \\ 
 &     &         &        &          &          &          &          &          &x^{m-3}  &x^{m-2}  \\ 
 &     &         &        &          &          &          &          &\iddots   &x^{m-4}  &            \\  
 &     &         &        &          &          &          &x         &\iddots   &            &            \\  
 &     &         &        &          &          &0         &x^2       &          &            &            \\   
 &     &         &        &          &0         &0         &          &          &            &            \\   
 &     &         &        &\iddots   &x^{m-2}&          &          &          &            &            \\    
 &     &         &\iddots &x^{m-4}&          &          &          &          &            &            \\     
 &     &0        &\iddots &          &          &          &          &          &            &            \\     
&0    &x^2      &        &          &          &          &          &          &            &            \\     
0&1    &         &        &          &          &          &          &          &            & 

\end{bmatrix}$$
where the number of antigonal elements equal to zero in the matrix $K_{2,0}$ is equal to $\lceil \frac{m+1}{2}\rceil$. Set
$$K_{2,1}= 
\begin{bmatrix}
 &     &         &        &          &          &          &          &          &            &x^{m-1}  \\ 
 &     &         &        &          &          &          &          &          &x^{m-3}  &x^{m-2}  \\ 
 &     &         &        &          &          &          &          &\iddots   &x^{m-4}  &            \\  
 &     &         &        &          &          &          &x^2         &\iddots   &            &            \\  
 &     &         &        &          &          &0         &x       &          &            &            \\   
 &     &         &        &          &0         &0         &          &          &            &            \\   
 &     &         &        &\iddots   &x^{m-3}&          &          &          &            &            \\    
 &     &         &\iddots &x^{m-5}&          &          &          &          &            &            \\     
 &     &0        &\iddots &          &          &          &          &          &            &            \\     
 &0    &x^2      &        &          &          &          &          &          &            &            \\     
0&1    &         &        &          &          &          &          &          &            & 

\end{bmatrix}
$$
where the number of antigonal elements equal to zero in the matrix $K_{2,1}$ is equal to $\lceil \frac{m+3}{2}\rceil$.

\begin{thm}\label{closedform} Let $m \geq 1$. Then, for all $n \geq 0$, we have
  $$A_{n}^{m \rightarrow m+1}(x)=\frac{d_{n}(x)}{(1-x^{m})^{\left\lfloor \frac{n}{m+1} \right\rfloor +1}}.$$
\end{thm}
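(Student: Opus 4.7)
The plan is to set $\tilde A_n(x) := d_n(x)/(1-x^m)^{\lfloor n/(m+1)\rfloor + 1}$ and show $\tilde A_n = A_n^{m\to m+1}$ for all $n\ge 0$ by induction on the block index $p = \lfloor n/(m+1)\rfloor$. The base case $p = 0$ (i.e.\ $0 \le n \le m$) is direct: by Proposition~\ref{hilfslem1}(2), the numerical multiplicities $[D(m,mj+k):D(m+1,n)]_{q=1}$ are nonzero (and equal to $1$) exactly when $k \in \{n, m-n\}$ and $mj+k \ge n$. Substituting $s = n+p$ and splitting the series $A_n^{m\to m+1}(x) = \sum_{p\ge 0}[D(m,n+p):D(m+1,n)]_{q=1}\, x^p$ according to whether $s \equiv n$ or $s \equiv m-n \pmod{m}$, I would sum a pair of geometric series to obtain $1/(1-x^m)$, $(1 + x^{m-2n})/(1 - x^m)$, $(1+x)/(1-x^m)$, or $(1+x^{2m-2n})/(1-x^m)$ according to the position of $n$ relative to $m/2$, and match each case-by-case with $d_n(x)/(1-x^m)$ from the definition.

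For the inductive step I would first translate Proposition~\ref{elltheorem} into recurrences for the $d_n$ alone. Writing $n = (m+1)p_n - r_n$ with $0 \le r_n \le m$, one has $\lfloor n/(m+1)\rfloor = p_n - 1$ when $r_n > 0$ and $=p_n$ when $r_n = 0$; the same bookkeeping for $n+m$, $n+2r_n$ and $n+2r_n-m-1$ determines the denominator exponents of the corresponding $\tilde A$'s. Multiplying through by the largest power of $(1-x^m)$ that occurs, the five branches of Proposition~\ref{elltheorem} convert to
\begin{align*}
d_{n+m} &= d_n && \bigl(r_n = 0,\ \text{or } r_n = m/2 \text{ with } m \text{ even}\bigr),\\
d_{n+m} &= (1-x^m)d_n + x^{2r_n}d_{n+2r_n} && \bigl(1 \le r_n \le \lfloor(m-1)/2\rfloor\bigr),\\
d_{n+m} &= (1-x^m)d_n + x\, d_{n+2r_n} && \bigl(r_n = (m+1)/2,\ m \text{ odd}\bigr),\\
d_{n+m} &= (1-x^m)d_n + x^{2r_n-m}d_{n+2r_n} + x^{2r_n-m-1}(1-x^m)d_{n+2r_n-m-1} && \bigl(\lfloor(m+3)/2\rfloor \le r_n \le m\bigr).
\end{align*}

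I then apply these with $n = (m+1)p + k$ for $k = 1, \dots, m$: one checks that $n+m = (m+1)(p+1) + (k-1)$ and $n+2r_n = (m+1)(p+1) + (m+1-k)$ both lie in block $p+1$, while $n$ and $n + 2r_n - m - 1 = (m+1)p + (m+1-k)$ lie in block $p$. Together with the relation $d_{(m+1)(p+1)} = d_{(m+1)(p+1) + m}$ from case $r_n = 0$ at the start of block $p+1$, this is a closed $(m+1)\times(m+1)$ linear system for the block $p+1$ vector $(d_{(m+1)(p+1)+j})_{j=0}^m$ in terms of the block $p$ vector. The coupling inside block $p+1$ pairs the equation indexed by $k-1$ with the unknown at $m+1-k$, so inverting the system amounts to solving roughly $\lfloor m/2\rfloor$ simultaneous $2\times 2$ blocks, with a $1\times 1$ singleton at the fixed point $k-1 = m+1-k$ when one exists.

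The main obstacle will be the bookkeeping that shows the resulting transition matrix is precisely $K = K_1 + K_{2,\mathrm{res}_2(m)}$. After the $2 \times 2$ pair solves, the ``$(1-x^m)d_n$'' contributions propagate into what becomes the diagonal (entries $0$ or $x^{m-1}$) and superdiagonal (entries $1$) of $K_1$, while the $x^{2r_n - m}$ and $x^{2r_n - m - 1}$ factors from the four-case recurrence produce the antidiagonal-style entries of $K_{2,0}$ or $K_{2,1}$; the asymmetric placement of zeros in $K_{2,0}$ versus $K_{2,1}$ encodes how the middle case ($r_n = m/2$ for $m$ even, or $r_n = (m+1)/2$ for $m$ odd) interacts with its neighbours, and the precise exponents $m-1, m-3, \ldots$ that appear there come from the shifts produced by the pair solves. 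Verifying that all these entries line up with the definitions given in the statement is a parity-dependent but mechanical check, after which the induction closes and the theorem follows.
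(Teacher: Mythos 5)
Your proposal is correct and follows essentially the same route as the paper: the paper sets $e_n(x)=A_n^{m\to m+1}(x)(1-x^m)^{\lfloor n/(m+1)\rfloor+1}$, gets the base block from Proposition~\ref{hilfslem1}, and obtains the block-to-block transition by pairing the two identities of Proposition~\ref{elltheorem} with remainders $r_n$ and $m-r_n$ and eliminating — which is exactly your $2\times 2$ solves (with the singleton at $r_n=m/2$ and the $r_n\in\{0,m\}$ pair) — before matching the resulting recursions \eqref{recursiond1}--\eqref{recursiond6} with the defining matrix $K$ of the $d_n$. Your derived $d$-recurrences and index bookkeeping agree with the paper's, so the only remaining work in either version is the same mechanical identification of the transition matrix with $K_1+K_{2,\mathrm{res}_2(m)}$.
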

\begin{proof}
Let $n,p_n\in\mathbb{Z}_+$, $0\leq r_n\leq m$ such that $n=(m+1)p_n-r_n$. To simplify the notation in the proof we set $A_{n}:=A_{n}^{m \rightarrow m+1}(x)$ and
$$e_n(x)=A_{n}(1-x^{m})^{\left\lfloor \frac{n}{m+1} \right\rfloor +1}.$$
The strategy of the proof is to show by induction on $p$ that 
\begin{equation}\label{definitionofd2} \begin{bmatrix} e_{(m+1)p}(x) & e_{(m+1)p+1}(x) & \cdots &
  e_{(m+1)p+m}(x)\end{bmatrix}^{T}= K^{p} \begin{bmatrix} d_0(x) & d_1(x) & \cdots &
  d_m(x) \end{bmatrix}^T.\end{equation}
 From Proposition~\ref{hilfslem1}, it is easy to see that 
	$$A_{n}=\frac{d_{n}(x)}{(1-x^{m})^{\left\lfloor \frac{n}{m+1} \right\rfloor +1}}  \ \ \text{ for } 0 \leq n \leq m$$
and the induction begins. First we verify certain recursive formulae for the $e_n(x)$'s using Theorem~\ref{elltheorem}.\\
\textit{Case 1:} If $1 \leq r_n \leq \lfloor \frac{m-1}{2} \rfloor$, we have
	\begin{equation}\label{dfirst}
	A_{(m+1)p_n-r_n}=A_{(m+1)p_n+m-r_n} - x^{2 r_n}A_{(m+1)p_n+r_n},	
	\end{equation}
\begin{equation}\label{dsecond}
         A_{(m+1)p_n-(m-r_n)}=A_{(m+1)p_n+r_n}-x^{m-2r_n}A_{(m+1)p_n+(m-r_n)}-x^{m-2r_n-1}A_{(m+1)p_n-r_n-1}.	
         \end{equation}
Multiplying equation \eqref{dfirst} by $x^{m-2r_n}$ and adding the result to equation \eqref{dsecond} gives
\begin{gather*}
	(1-x^m)A_{(m+1)p_n+r_n}=A_{(m+1)p_n-(m-r_n)}+x^{m-2r_n}A_{(m+1)p_n-r_n}+x^{m-2r_n-1}A_{(m+1)p_n-r_n-1}.
\end{gather*}
It follows
\begin{equation}\label{recursiond1}
	e_{(m+1)p_n+r_n}(x)=e_{(m+1)p_n-(m-r_n)}(x)+x^{m-2r_n}e_{(m+1)p_n-r_n}(x)+x^{m-2r_n-1}e_{(m+1)p_n-r_n-1}(x).
\end{equation}
\textit{Case 2:} If $\lfloor \frac{m+3}{2} \rfloor \leq r_n \leq m-1$ we obtain
\begin{equation}\label{dthird}
	A_{(m+1)p_n-r_n}=A_{(m+1)p_n+m-r_n}-x^{2r_n-m}A_{(m+1)p_n+r_n}-x^{2r_n-m-1}A_{(m+1)p_n+r_n-m-1},
\end{equation}
\begin{equation}\label{dfourth}
	A_{(m+1)p_n-(m-r_n)}=A_{(m+1)p_n+r_n}-x^{2m-2r_n}A_{(m+1)p_n+m-r_n}.
\end{equation}
Multiplying equation \eqref{dthird} by $x^{2m-2r_n}$ and adding the result to equation \eqref{dfourth} gives
\begin{gather*}
	(1-x^m)A_{(m+1)p_n+r_n}=A_{(m+1)p_n-(m-r_n)}+x^{2m-2r_n}A_{(m+1)p_n-r_n}^{m \rightarrow m+1}(x)+x^{m-1}A_{(m+1)p_n+r_n-m-1}.
\end{gather*}
This implies
\begin{equation}\label{recursiond2}
	e_{(m+1)p_n+r_n}(x)=e_{(m+1)p_n-(m-r_n)}(x)+x^{2m-2r_n}e_{(m+1)p_n-r_n}(x)+x^{m-1}e_{(m+1)(p_n-1)+r_n}(x).
\end{equation}
\textit{Case 3:} If $r_n=m$, then 
\begin{align*}
	A_{(m+1)p_n-m}&=A_{(m+1)p_n}-x^m A_{(m+1)p_n+m}-x^{m-1}A_{(m+1)p_n-1}&\\&
	=(1-x^m)A_{(m+1)p_n+m}-x^{m-1}A_{(m+1)p_n-1}.
\end{align*}
Hence
\begin{equation}\label{recursiond3}
	e_{(m+1)p_n+m}(x)=e_{(m+1)(p_n-1)+1}(x)+x^{m-1}e_{(m+1)(p_n-1)+m}(x).
\end{equation}
\textit{Case 4:} If $m$ is even and $r_n=\frac{m}{2}$, we get
\begin{align*}
	A_{(m+1)p_n-\frac{m}{2}}&=A_{(m+1)p_n+\frac{m}{2}}-x^m A_{(m+1)p_n+\frac{m}{2}}&\\&
	=(1-x^m)A_{(m+1)p_n+\frac{m}{2}}.
\end{align*}
This shows
\begin{equation}\label{recursiond4}
	e_{(m+1)p_n+\frac{m}{2}}(x)=e_{(m+1)(p_n-1)+\frac{m}{2}+1}(x). 
\end{equation}
\textit{Case 4:} If $m$ is odd, then we get the following two equations by setting $r_n=\lfloor \frac{m+1}{2} \rfloor$ and $r_n=\frac{m-1}{2}$ respectively:
\begin{equation}\label{dfifth}
	A_{(m+1)p_n-\frac{m+1}{2}}=A_{(m+1)p_n+\frac{m-1}{2}}-xA_{(m+1)p_n+\frac{m+1}{2}}^{m \rightarrow m+1}(x),
\end{equation}
\begin{equation}\label{dsixth}
	A_{(m+1)p_n-\frac{m-1}{2}}=A_{(m+1)p_n+\frac{m+1}{2}}-x^{m-1}A_{(m+1)p_n+\frac{m-1}{2}}.
\end{equation}
Multiplying equation \eqref{dsixth} by $x$ and adding the result to equation \eqref{dfifth} yields
\begin{gather*}
	(1-x^m)A_{(m+1)p_n+\frac{m-1}{2}}=A_{(m+1)p_n-\frac{m+1}{2}}^{m \rightarrow m+1}(x)+xA_{(m+1)p_n-\frac{m-1}{2}}.
\end{gather*}
Thus
\begin{equation}\label{recursiond5}
e_{(m+1)p_n+\frac{m-1}{2}}(x)=e_{(m+1)p_n-\frac{m+1}{2}}(x)+xe_{(m+1)p_n-\frac{m-1}{2}}(x).
\end{equation}
Similarly multiplying equation \eqref{dfifth} by $x^m$ and adding the result to equation \eqref{dsixth} gives
\begin{gather*}
	(1-x^m)A_{(m+1)p_n+\frac{m+1}{2}}=A_{(m+1)p_n-\frac{m-1}{2}}^{m \rightarrow m+1}(x)+x^{m-1}A_{(m+1)p_n-\frac{m+1}{2}},
\end{gather*}
which implies 
\begin{equation}\label{recursiond6}
	e_{(m+1)p_n+\frac{m+1}{2}}(x)=e_{(m+1)(p_n-1)+\frac{m+1}{2}+1}(x)+x^{m-1}e_{(m+1)(p_n-1)+\frac{m+1}{2}}(x).
\end{equation}
Hence the recursive formulae for the $e_n(x)$'s from equations \eqref{recursiond1},  \eqref{recursiond2}, \eqref{recursiond3}, \eqref{recursiond4}, \eqref{recursiond5} and \eqref{recursiond6} imply for $p>0$

\begin{align*}K\begin{bmatrix} e_{(m+1)(p-1)}(x) & e_{(m+1)(p-1)+1}(x) & \cdots &
  e_{(m+1)(p-1)+m}(x)\end{bmatrix}^{T}\\=\begin{bmatrix} e_{(m+1)p}(x) & e_{(m+1)p+1}(x) & \cdots & e_{(m+1)p+m}(x)\end{bmatrix}^{T}.
\end{align*}
Applying the induction hypothesis shows \eqref{definitionofd2}.
\end{proof}

%%%%%%%%%%%%%%%%%%%%%%%%%%%%%%%%%%%%%%%%%%%%%%%%%%%%%%%%%%%%%%%%%%%%%%%%%%%%%%%%%%%%%%%%%%%%%%%%%%%%%%%%%%%%%%%%%%%%%%

%%%%%%%%%%%%%%%%%%%%%%%%%%%%%%%%%%%%%%%%%%%%%%%%%%%%%%%%%%%%%%%%%%%%%%%%%%%%%%%%%%%%%%%%%%%%%%%%%%%%%%%%%%%%%%%%%%%%%%%%%
% References
%%%%%%%%%%%%%%%%%%%%%%%%%%%%%%%%%%%%%%%%%%%%%%%%%%%%%%%%%%%%%%%%%%%
\bibliographystyle{plain}
\bibliography{bibfile}

\end{document}